\newlength{\defbaselineskip}
\theoremstyle{plain}
\newtheorem{theorem}{Theorem}
\newtheorem{prop}[theorem]{Proposition}
\newtheorem{corollary}[theorem]{Corollary}
\theoremstyle{definition}
\newtheorem{example}[theorem]{Example}
\newtheorem{remark}[theorem]{Remark}
\newcommand{\boks}[1]{\ytableausetup{boxsize = #1 cm}}
\newcommand{\y}[1]{\ydiagram{#1}}
\newcommand{\yt}[1]{\ytableaushort{#1}}
\newcommand{\Z}{\mathbb{Z}}
\newcommand{\Q}{\mathbb{Q}}
\newcommand{\xx}{\mathbf{x}}
\newcommand{\mcM}{\mathcal{M}}
\newcommand{\vdt}{\Vdash}
\newcommand{\PLambda}{\textsf{P}\mathsf{\Lambda}}
\renewcommand{\top}{\mathrm{top}}
\renewcommand{\bot}{\mathrm{left}}
\newcommand{\vn}{\varnothing}
\DeclareMathOperator{\area}{area}
\DeclareMathOperator{\dg}{dg}
\DeclareMathOperator{\sgn}{sgn}
\DeclareMathOperator{\ICRHT}{ICRHT} 
\DeclareMathOperator{\ICRPT}{ICRPT} 
\DeclareMathOperator{\PTBT}{PTBT} 
\DeclareMathOperator{\HTBT}{HTBT} 
\DeclareMathOperator{\ETBT}{ETBT} 
\DeclareMathOperator{\TRHT}{TRHT} 
\DeclareMathOperator{\TPRT}{TPRT} 
\DeclareMathOperator{\wt}{wt}
\let\bbmatrix\bordermatrix
\patchcmd{\bbmatrix}{8.75}{8.75}{}{}
\patchcmd{\bbmatrix}{\left(}{\left[}{}{}
\patchcmd{\bbmatrix}{\right)}{\right]}{}{}
\newlength{\cellsize}
\newcommand\tableau[1]{
\vcenter{
\let\\=\cr
\baselineskip=-16000pt
\lineskiplimit=16000pt
\lineskip=0pt
\halign{&\tableaucell{##}\cr#1\crcr}}}
\newcommand{\tableaucell}[1]{{%
\def \arg{#1}\def \void{}%
\ifx \void \arg
\vbox to \cellsize{\vfil \hrule width \cellsize height 0pt}%
\else
\unitlength=\cellsize
\begin{picture}(1,1)
\put(0,0){\makebox(1,1){$#1$}}
\put(0,0){\line(1,0){1}}
\put(0,1){\line(1,0){1}}
\put(0,0){\line(0,1){1}}
\put(1,0){\line(0,1){1}}
\end{picture}%
\fi}}
\begin{document}

\title{Transition matrices and Pieri-type rules for polysymmetric functions}
 
\author{Aditya Khanna
\and Nicholas A. Loehr}
\thanks{This work was supported by a grant from the Simons
 Foundation/SFARI (\#633564 to Nicholas Loehr).} 
\address{Dept. of Mathematics, Virginia Tech, Blacksburg, VA 24061-0123}
\email{adityakhanna@vt.edu, nloehr@vt.edu}
 
\begin{abstract}
Asvin G and Andrew O'Desky recently introduced
the graded algebra $\PLambda$ of polysymmetric functions as a 
generalization of the algebra $\Lambda$ of symmetric functions.
This article develops combinatorial formulas
for some multiplication rules and transition matrix entries for $\PLambda$
that are analogous to well-known classical formulas for $\Lambda$.
In more detail, we consider pure tensor bases $\{s^{\otimes}_{\tau}\}$,
$\{p^{\otimes}_{\tau}\}$, and $\{m^{\otimes}_{\tau}\}$ for $\PLambda$ that 
arise as tensor products of the classical Schur basis, power-sum basis, and
monomial basis for $\Lambda$. We find expansions in these bases
of the non-pure bases $\{P_{\delta}\}$, $\{H_{\delta}\}$,
$\{E^+_{\delta}\}$, and $\{E_{\delta}\}$ studied by Asvin G and O'Desky.
The answers involve tableau-like structures generalizing semistandard
tableaux, rim-hook tableaux, and the brick tabloids of
E\u{g}ecio\u{g}lu and Remmel. These objects arise by iteration of new Pieri-type
rules that give expansions of products such as $s^{\otimes}_{\sigma}H_{\delta}$,
$p^{\otimes}_{\sigma}E_{\delta}$, etc.
\end{abstract}

\maketitle

\noindent\textbf{Keywords:} symmetric functions; polysymmetric functions;
 transition matrices; plethysm; Pieri Rules; Murnaghan--Nakayama Rule; 
 rim-hook tableaux; brick tabloids; types.
 
\noindent\textbf{2020 MSC Subject Classifications:} 05E05; 05A17.

\section{Introduction}
\label{sec:intro} 

The ring $\Lambda$ of symmetric functions is an object of great interest in
modern algebraic combinatorics. Recently, Asvin G and Andrew O'Desky 
introduced a generalization $\PLambda$ called the ring of polysymmetric 
functions~\cite{polysymm}. Our goal in this paper is to extend some
of the rich combinatorial theory for symmetric functions to the new
setting of polysymmetric functions. In particular, we develop combinatorial 
formulas for some multiplication rules and transition matrix entries for 
$\PLambda$ that are analogous to well-known classical formulas for $\Lambda$.

\subsection{Review of Symmetric Functions}
\label{subsec:rev-symm}

We assume the reader has some prior familiarity with symmetric functions;
background material may be found in texts such 
as~\cite{loehr-comb,macd,stanvol2}. We briefly recall some fundamental
notation and terminology. An \emph{integer partition of $n$} is a weakly
decreasing sequence $\lambda=(\lambda_1,\lambda_2,\ldots,\lambda_\ell)$
of positive integers with sum $n$. We call $\lambda_i$ the $i$th
\emph{part} of $\lambda$, and let $\ell(\lambda)=\ell$ be the 
number of nonzero parts of $\lambda$. 
We write $|\lambda|=n$ or $\area(\lambda)=n$ or $\lambda\vdash n$ to mean
that $\lambda$ is an integer partition of $n$. We write 
$\lambda=(1^{m_1}2^{m_2}3^{m_3}\cdots)$ to indicate that $\lambda$
is a partition with $m_1$ parts equal to $1$, $m_2$ parts equal to $2$,
and so on.  We denote the number of times $i$ appears in $\lambda$ 
by $m_i(\lambda)$.  A \emph{symmetric function over 
$\Q$} is a formal power series of bounded degree in countably many variables 
with coefficients in $\Q$, say $f=f(\xx)=f(x_1,x_2,\ldots,x_m,\ldots)$, 
that remains unchanged under any permutation of the variables $x_i$. Letting 
each variable $x_i$ have degree $1$, the set $\Lambda^n$ of homogeneous 
symmetric functions of degree $n$ is a vector space of dimension $p(n)$, 
the number of integer partitions of $n$.  The set of all symmetric functions 
is a graded $\Q$-algebra $\Lambda=\bigoplus\limits_{n\geq 0} \Lambda^n$.

Bases of the vector space $\Lambda^n$ are naturally indexed by integer 
partitions of $n$. The \emph{monomial symmetric function}
$m_{\lambda}(\xx)$ is the formal sum of all distinct monomials obtained by
permuting the subscripts in $x_1^{\lambda_1}x_2^{\lambda_2}\cdots 
x_{\ell}^{\lambda_\ell}$. The \emph{complete symmetric function}
$h_k(\xx)$ is the sum of all monomials $x_{i_1}x_{i_2}\cdots x_{i_k}$
where $1\leq i_1\leq i_2\leq\cdots\leq i_k$. The \emph{elementary symmetric
function} $e_k(\xx)$ is the sum of all monomials $x_{i_1}x_{i_2}\cdots x_{i_k}$
where $1\leq i_1<i_2<\cdots<i_k$. The \emph{power-sum symmetric function}
$p_k(\xx)$ is $x_1^k+x_2^k+\cdots+x_m^k+\cdots$. For any list of positive
integers $\alpha=(\alpha_1,\alpha_2,\ldots,\alpha_s)$, we define
\[ h_{\alpha}(\xx)=\prod_{i=1}^s h_{\alpha_i}(x),\quad
   e_{\alpha}(\xx)=\prod_{i=1}^s e_{\alpha_i}(x),\quad
   p_{\alpha}(\xx)=\prod_{i=1}^s p_{\alpha_i}(x). \]
The \emph{Schur symmetric function} $s_{\lambda}(\xx)$ can be defined
as $s_{\lambda}(\xx)=\sum_{\mu} K_{\lambda,\mu}m_{\mu}(\xx)$, 
where the \emph{Kostka number} $K_{\lambda,\mu}$ is the number of 
semistandard Young tableaux of shape $\lambda$ and content $\mu$.
It is known that each of the sets $\{m_{\lambda}:\lambda\vdash n\}$,
$\{h_{\lambda}:\lambda\vdash n\}$, $\{e_{\lambda}:\lambda\vdash n\}$,
$\{p_{\lambda}:\lambda\vdash n\}$, and
$\{s_{\lambda}:\lambda\vdash n\}$ is a basis of $\Lambda^n$.
It follows that each of the sets $\{h_k: k\in\Z_{>0}\}$,
 $\{e_k:k\in\Z_{>0}\}$, and $\{p_k: k\in\Z_{>0}\}$ is algebraically
independent over $\Q$. This leads to an abstract description of $\Lambda$
as a polynomial ring $\Lambda=\Q[h_k:k>0]$ in formal indeterminates $h_k$
where $\deg(h_k)=k$. Similarly, we can think of $\Lambda$ as a polynomial
ring in the $e_k$ or the $p_k$, where $\deg(e_k)=k=\deg(p_k)$.
  
Transition matrices between bases of $\Lambda^n$ often exhibit interesting
combinatorics~\cite{remmel-trans,eg-rem}. Given indexed bases 
$\{f_{\lambda}: \lambda\vdash n\}$ and $\{g_{\lambda}:\lambda\vdash n\}$
of $\Lambda^n$, the \emph{transition matrix} $\mcM(f,g)$ is the unique
matrix (with rows and columns indexed by partitions of $n$) such that
\begin{equation}\label{eq:trans-mat1}
 f_{\mu}=\sum_{\lambda\vdash n} \mcM(f,g)_{\lambda,\mu}g_{\lambda}.
\end{equation}
For example, the definition of Schur functions (given above) states that
$\mcM(s,m)_{\lambda,\mu}$ is the Kostka number $K_{\mu,\lambda}$.
It is known that $\mcM(h,s)_{\lambda,\mu}=K_{\lambda,\mu}$, so that
$\mcM(s,m)$ is the transpose of $\mcM(h,s)$. It is routine to check
that matrix inversion switches the roles of the input basis and the
output basis: $\mcM(g,f)=\mcM(f,g)^{-1}$.  If $\{k_{\lambda}\}$ is
another basis of $\Lambda^n$, then $\mcM(f,k)$ is the matrix product
$\mcM(g,k)\mcM(f,g)$.

\subsection{Polysymmetric Functions}
\label{subsec:polysymm}

For each positive integer $d$, let $\Lambda_{(d)}$ be a copy of the
ring $\Lambda$ of symmetric functions where all degrees are multiplied 
by $d$. The $\Q$-algebra of \emph{polysymmetric functions} may be defined
abstractly as the tensor product
\[ \PLambda=\Lambda_{(1)}\otimes\Lambda_{(2)}\otimes\cdots\otimes
\Lambda_{(d)}\otimes\cdots. \]
To get a more concrete description, we view $\Lambda_{(d)}$ as the ring of 
symmetric functions in a variable set $\xx_{d*}=\{x_{d,1},x_{d,2},\ldots\}$,
where $\deg(x_{d,i})=d$ for all $i\geq 1$. Then $\PLambda$ appears as a
particular subalgebra of the $\Q$-algebra $\Q[[\xx_{**}]]$ of formal series of 
bounded degree in all the variables $x_{d,i}$ for $d,i\in\Z_{>0}$.
A formal series $f=f(\xx_{**})$ belongs to $\PLambda$ iff for each fixed $d$,
$f$ is unchanged by any permutation of the variables in $\xx_{d*}$.
An isomorphism between the abstract and concrete versions of $\PLambda$
is defined by sending the pure tensor $f_1\otimes f_2\otimes f_3\otimes\cdots$ 
to the formal series 
$f_1(\xx_{1*})f_2(\xx_{2*})f_3(\xx_{3*})\cdots$. Like $\Lambda$,
$\PLambda$ is a graded algebra: $\PLambda=\bigoplus_{n\geq 0} \PLambda^n$,
where $\PLambda^n$ is the vector space of homogeneous polysymmetric functions
of degree $n$.

Bases for $\PLambda^n$ are naturally indexed by \textit{(splitting) types}, 
which we discuss next.
A \emph{block} is an ordered pair of positive integers $(d,m)$, which we 
usually write as $d^m$. We say $d^m$ has \emph{degree} $d$, \emph{multiplicity}
$m$, and \emph{weight} $dm$.  We order blocks by writing $a^b\geq d^e$ to mean
either $a>d$, or $a=d$ and $b\geq e$. A \emph{type} of weight $n$ 
is a weakly decreasing sequence of blocks 
$\tau=(d_1^{m_1},d_2^{m_2},\ldots,d_s^{m_s})$ such that
$d_1m_1+d_2m_2+\cdots+d_sm_s=n$. We write $|\tau|=n$ or $\tau\vdt n$ to 
mean that $\tau$ is a type of weight $n$. We call $s$ 
the \textit{length of $\tau$} and write $s=\ell(\tau)$. 
For fixed $d$, let $\tau|_d$
(sometimes abbreviated as $\tau_d$) 
be the partition formed by taking the multiplicities of the blocks of $\tau$
of degree $d$. For example, $\tau=(3^43^43^22^32^22^12^11^41^31^31^1)$
is a type of weight $55$ with $\tau|_3=(4,4,2)$, $\tau|_2=(3,2,1,1)$, 
and $\tau|_1=(4,3,3,1)$.  We may abbreviate any type $\tau$ by writing 
$\tau=(1^{\tau|_1}2^{\tau|_2}3^{\tau|_3}\cdots)$.
The \emph{sign} of type $\tau$ is $\sgn(\tau)=\prod_{i=1}^k (-1)^{m_i}$.
The power of $-1$ in $\sgn(\tau)$ is 
$\sum_{i=1}^k m_i=\sum_{i=1}^k \area(\tau|_i)$.

\begin{remark}
Types of weight $n$ encode the possible ways a polynomial $p(x)\in\Q[x]$
of degree $n$ can split into irreducible factors. For example,
$p=(x^2+1)^3(x^2-2)^3(x^2-3)(x-1)^2(x-2)^2$ has associated type
$\tau=(2^32^32^11^21^2)$.
\end{remark}

Suppose $\{f_{\lambda}\}$ is any fixed basis for $\Lambda$,
where $\lambda$ ranges over integer partitions, 
and $f_{\lambda}\in\Lambda^n$ whenever $\lambda\vdash n$. By the general
theory of tensor products, it follows that the set of tensor products
$f_{\lambda_{(1)}}\otimes f_{\lambda_{(2)}}\otimes f_{\lambda_{(3)}}
 \otimes\cdots$, 
where all but finitely many $f_{\lambda_{(d)}}$ are equal to $1$,
is a basis for the vector space $\PLambda$. We can identify
the list $(\lambda_{(1)},\lambda_{(2)},\ldots)$ with the
type $\tau=(1^{\lambda_{(1)}}2^{\lambda_{(2)}}\cdots)$. Define
\[ f_{\tau}^{\otimes}=f_{\tau|_1}\otimes f_{\tau|_2}\otimes\cdots
                    =\prod_{d\geq 1} f_{\tau|_d}(\xx_{d*}). \]
This is a homogeneous element of $\PLambda$ of degree
$\sum_{d\geq 1} d\area(\tau|_d)=|\tau|$.
Letting $\tau$ range over all types, 
we get a basis $\{f_{\tau}^{\otimes}\}$ of $\PLambda$.
For each $n\geq 0$, $\{f_{\tau}^{\otimes}:\tau\vdt n\}$ 
is a basis of $\PLambda^n$. We call these bases of $\PLambda$
and $\PLambda^n$ the \emph{pure tensor bases} associated with 
the given basis $\{f_{\lambda}\}$ of $\Lambda$. 

\begin{example}
Given $\tau=(4^44^34^12^22^22^12^11^31^31^11^1) 
           =(1^{3,3,1,1}2^{2,2,1,1}4^{4,3,1})$,
\[ m_{\tau}^{\otimes}=m_{3311}\otimes m_{2211}\otimes 1\otimes m_{431}\otimes 1
\otimes 1\otimes\cdots=m_{3311}(\xx_{1*})m_{2211}(\xx_{2*})m_{431}(\xx_{4*}). \]
Hereafter, we often omit trailing $1$s in the tensor product presentation
of a polysymmetric function.
\end{example}

\subsection{The Bases $H$, $E^+$, $E$, and $P$}
\label{subsec:new-bases}

The authors of~\cite{polysymm} introduced four bases of $\PLambda$,
denoted by $\{H_{\tau}\}$, $\{E^+_{\tau}\}$, $\{E_{\tau}\}$, and $\{P_{\tau}\}$,
that are not pure tensor bases. These are polysymmetric analogues
of the symmetric functions $h_{\mu}$, $e_{\mu}$, and $p_{\mu}$,
defined as follows. Order the subscripts of variables in $\xx_{**}$ 
lexicographically: $(i,j)\leq (k,\ell)$ means $i<k$, or $i=k$ and $j\leq\ell$.
For each positive integer $d$, define
\begin{equation}\label{eq:Hd}
 H_d=\sum_{\substack{(i_1,j_1)\leq (i_2,j_2)\leq\cdots\leq (i_s,j_s)
  \\ i_1+i_2+\cdots+i_s=d}} x_{i_1,j_1}x_{i_2,j_2}\cdots x_{i_s,j_s},
\end{equation}
which is the sum of all distinct monomials of degree $d$. Define
\begin{equation}\label{eq:E+d}
 E_d^+=\sum_{\substack{(i_1,j_1)<(i_2,j_2)<\cdots<(i_s,j_s)
  \\ i_1+i_2+\cdots+i_s=d}} x_{i_1,j_1}x_{i_2,j_2}\cdots x_{i_s,j_s},
\end{equation}
which is the sum of monomials of degree $d$ where no variable $x_{ij}$ appears
more than once within any given monomial. 
Such monomials are called \textit{square-free}.
Define
\begin{equation}\label{eq:Ed}
 E_d=\sum_{\substack{(i_1,j_1)<(i_2,j_2)<\cdots<(i_s,j_s)
  \\ i_1+i_2+\cdots+i_s=d}} (-1)^s x_{i_1,j_1}x_{i_2,j_2}\cdots x_{i_s,j_s},
\end{equation}
which is a signed variation of $E_d^+$. Define
\begin{equation}\label{eq:Pd}
 P_d=\sum_{k|d} k\sum_{j\geq 1} x_{k,j}^{d/k},
\end{equation}
where ``$\sum_{k|d}$'' indicates a sum over positive divisors $k$ of $d$.
It is routine to check
that $H_d$, $E_d^+$, $E_d$, and $P_d$ all belong to $\PLambda^d$.

For any block $d^m$, define $H_{d^m}=H_d(\xx_{**}^m)$, which means
that every variable $x_{ij}$ appearing in every monomial of $H_d$
gets replaced by $x_{ij}^m$. Similarly, define $E^+_{d^m}=E^+_d(\xx_{**}^m)$,
$E_{d^m}=E_d(\xx_{**}^m)$, and $P_{d^m}=P_d(\xx_{**}^m)$. These objects
are all in $\PLambda^{dm}$.  Finally, for any ordered
sequence of blocks $\delta=(d_1^{m_1},d_2^{m_2},\ldots,d_t^{m_t})$, define
\[ H_{\delta}=\prod_{i=1}^t H_{d_i^{m_i}},\ 
   E_{\delta}^+=\prod_{i=1}^t E^+_{d_i^{m_i}},\ 
   E_{\delta}=\prod_{i=1}^t E_{d_i^{m_i}},\ 
   P_{\delta}=\prod_{i=1}^t P_{d_i^{m_i}}. \]
In particular, this defines $H_{\tau}$ (etc.) when $\tau$ is a type.
It is shown in~\cite{polysymm} that each of the sets
$\{H_{\tau}:\tau\vdt n\}$, $\{E^+_{\tau}:\tau\vdt n\}$,
$\{E_{\tau}:\tau\vdt n\}$, and $\{P_{\tau}:\tau\vdt n\}$ is 
a linear basis of $\PLambda^n$.
As in the case of $\Lambda$, this leads to an alternate algebraic
characterization of $\PLambda$ as an abstract polynomial ring.
Starting with formal indeterminates $H_{d^m}$ for each block $d^m$,
we can think of $\PLambda$ as $\Q[H_{d^m}: d,m>0]$, where $H_{d^m}$
has degree $dm$. Similarly, $\PLambda=\Q[E^+_{d^m}: d,m>0]
 =\Q[E_{d^m}: d,m>0]=\Q[P_{d^m}: d,m>0]$.

\subsection{Transition Matrices for $\PLambda$}
\label{subsec:trans-mat}

Our main goal in this paper is to develop the combinatorics of certain
transition matrices between bases of $\PLambda^n$. We use notation analogous
to the symmetric case. Given bases $\{F_{\tau}:\tau\vdt n\}$ and
$\{G_{\tau}:\tau\vdt n\}$ of $\PLambda^n$,
the \emph{transition matrix} $\mcM(F,G)$ is the unique matrix (with
rows and columns indexed by types of weight $n$) such that
\begin{equation}\label{eq:trans-mat2}
 F_{\sigma}=\sum_{\tau\vdt n} \mcM(F,G)_{\tau,\sigma}G_{\tau}.
\end{equation}

In the special case of pure tensor bases, we can immediately find
transition matrices for $\PLambda$ if we know the corresponding
transition matrices for $\Lambda$. 

\begin{prop}\label{prop:transmat-tensor}
Let $\{f_{\lambda}\}$ and $\{g_{\lambda}\}$ be bases of $\Lambda$
such that $f_{\lambda},g_{\lambda}\in\Lambda^n$ whenever $\lambda\vdash n$.
Let $F_{\tau}=f_{\tau}^{\otimes}$ and $G_{\tau}=g_{\tau}^{\otimes}$ be
the corresponding pure tensor bases. For all types $\sigma,\tau$,
\[ \mcM(F,G)_{\tau,\sigma}=\prod_{d\geq 1} \mcM(f,g)_{\tau|_d,\sigma|_d}. \]
\end{prop}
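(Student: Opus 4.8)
The plan is to work in the concrete realization of $\PLambda$ and to exploit the fact that the variable sets $\xx_{d*}$ for distinct $d$ are disjoint. First I would write
\[ F_{\sigma}=f_{\sigma}^{\otimes}=\prod_{d\geq 1} f_{\sigma|_d}(\xx_{d*}), \]
where all but finitely many factors equal $1$ because $\sigma$ has only finitely many blocks (so $\sigma|_d=\vn$ and $f_{\vn}=1$ for all large $d$). For each fixed $d$, the ring $\Lambda_{(d)}$ of symmetric functions in $\xx_{d*}$ is an isomorphic copy of $\Lambda$ in which rescaling the degrees does not disturb the expansion coefficients of one basis in terms of another. Hence the defining relation \eqref{eq:trans-mat1}, transported along this isomorphism, gives
\[ f_{\sigma|_d}(\xx_{d*})=\sum_{\lambda} \mcM(f,g)_{\lambda,\sigma|_d}\,g_{\lambda}(\xx_{d*}), \]
where $\lambda$ ranges over partitions of $\area(\sigma|_d)$.

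Next I would substitute these single-degree expansions into the product and distribute. Because the factors for distinct $d$ involve disjoint variable sets, expanding the (finite) product of sums produces a sum over tuples $(\lambda^{(d)})_{d\geq 1}$ of partitions with $\lambda^{(d)}\vdash\area(\sigma|_d)$, in which the coefficient $\prod_{d\geq 1}\mcM(f,g)_{\lambda^{(d)},\sigma|_d}$ multiplies the monomial product $\prod_{d\geq 1} g_{\lambda^{(d)}}(\xx_{d*})$. Each such tuple corresponds bijectively to a type $\tau$ via $\tau|_d=\lambda^{(d)}$, and for this $\tau$ the monomial product is precisely the pure tensor $g_{\tau}^{\otimes}=G_{\tau}$. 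Moreover $|\tau|=\sum_{d\geq 1} d\,\area(\tau|_d)=\sum_{d\geq 1} d\,\area(\sigma|_d)=|\sigma|=n$, so only types of weight $n$ arise in the sum.

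Carrying this out rewrites $F_{\sigma}$ as
\[ F_{\sigma}=\sum_{\tau\vdt n}\left(\prod_{d\geq 1}\mcM(f,g)_{\tau|_d,\sigma|_d}\right)G_{\tau}. \]
Comparing with the defining equation \eqref{eq:trans-mat2} and invoking the uniqueness of the coefficients in an expansion relative to the basis $\{G_{\tau}\}$ yields the claimed formula. The work here is essentially bookkeeping rather than a genuine obstacle, but the points requiring care are: confirming that both the product defining $F_{\sigma}$ and the nominally infinite product $\prod_{d\geq 1}\mcM(f,g)_{\tau|_d,\sigma|_d}$ are actually finite (using $f_{\vn}=g_{\vn}=1$, whence $\mcM(f,g)_{\vn,\vn}=1$, so degrees $d$ with $\sigma|_d=\vn$ force $\tau|_d=\vn$ and contribute a trivial factor); and verifying that the tuple-to-type correspondence simultaneously respects the weight grading and identifies $\prod_{d\geq 1} g_{\lambda^{(d)}}(\xx_{d*})$ with $G_{\tau}$.
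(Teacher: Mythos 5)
Your proposal is correct and follows essentially the same route as the paper's proof: expand each tensor factor $f_{\sigma|_d}(\xx_{d*})$ via the classical transition matrix for $\Lambda$, distribute the product over the sums, and reindex the resulting tuples of partitions $(\lambda^{(d)})_{d\geq 1}$ as types $\tau$ with $\tau|_d=\lambda^{(d)}$, so that the coefficient of $G_{\tau}$ is $\prod_{d\geq 1}\mcM(f,g)_{\tau|_d,\sigma|_d}$. The additional checks you flag (finiteness of the products and preservation of the weight grading) are sound details that the paper leaves implicit.
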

\begin{proof}
For $\sigma\vdt n$, we compute 
\begin{align*}
F_{\sigma}=f_{\sigma}^{\otimes} &= \prod_{d\geq 1} f_{\sigma|_d}(\xx_{d*})
  =\prod_{d\geq 1} \sum_{\lambda_{(d)}\vdash \area(\sigma|_d)}
    \mcM(f,g)_{\lambda_{(d)},\sigma|_d} g_{\lambda_{(d)}}(\xx_{d*})
\\ &= \sum_{\lambda_{(1)}\vdash \area(\sigma|_1)}\cdots
      \sum_{\lambda_{(d)}\vdash \area(\sigma|_d)}\cdots
\prod_{d\geq 1} \mcM(f,g)_{\lambda_{(d)},\sigma|_d} g_{\lambda_{(d)}}(\xx_{d*})
= \sum_{\tau\vdt n} \prod_{d\geq 1} \mcM(f,g)_{\tau|_d,\sigma|_d} 
g_{\tau}^{\otimes}.
\end{align*}
where in the last step we set $\tau=(1^{\lambda_{(1)}}2^{\lambda_{(2)}}\cdots
 d^{\lambda_{(d)}}\cdots)$. So the coefficient of $G_{\tau}$ in $F_{\sigma}$
is $\prod_{d\geq 1} \mcM(f,g)_{\tau|_d,\sigma|_d}$, as needed.
\end{proof}

\subsection{Main Results}
\label{subsec:results}

Transition matrices involving the bases $H$, $E$, $E^+$, and $P$ are more 
subtle. In this paper, we find formulas for entries in
the following transition matrices:
\begin{itemize}
\item $\mcM(P,s^{\otimes})$, $\mcM(H,s^{\otimes})$, 
$\mcM(E^+,s^{\otimes})$, $\mcM(E,s^{\otimes})$ (\cref{sec:s-expand}).
\item $\mcM(P,p^{\otimes})$, $\mcM(H,p^{\otimes})$, 
 $\mcM(E^+,p^{\otimes})$, $\mcM(E,p^{\otimes})$ (\cref{sec:p-expand}).
\item $\mcM(P,m^{\otimes})$, $\mcM(H,m^{\otimes})$, 
$\mcM(E^+,m^{\otimes})$, $\mcM(E,m^{\otimes})$ (\cref{sec:m-expand}).
\end{itemize}

Our $s^{\otimes}$-expansions involve tableau-like structures that 
arise by iteration of certain rules analogous to the Pieri rules 
(giving the Schur expansions of $s_{\mu}h_k$ and $s_{\mu}e_k$) and the
Murnaghan--Nakayama rule (giving the Schur expansion of $s_{\mu}p_k$).
Letting $\delta=(d_1^{m_1},d_2^{m_2},\ldots,d_t^{m_t})$ be any ordered
sequence of blocks, we prove Pieri-type rules for the $s^{\otimes}$-expansions
of $s^{\otimes}_{\sigma}P_{\delta}$, 
   $s^{\otimes}_{\sigma}H_{\delta}$, 
   $s^{\otimes}_{\sigma}E^+_{\delta}$,
and $s^{\otimes}_{\sigma}E_{\delta}$.
Our $p^{\otimes}$-expansions have a more algebraic flavor and reveal
some identities for $\PLambda$ analogous to corresponding power-sum
identities for $\Lambda$. 
Our $m^{\otimes}$-expansions complement some comparable results 
in~\cite{polysymm}. We give combinatorial descriptions of transition
matrix entries using objects generalizing the brick tabloids
studied by E\u{g}ecio\u{g}lu and Remmel~\cite{eg-rem}.
We also prove Pieri-like rules for the $p^{\otimes}$-expansions
of $p_{\sigma}^{\otimes}F_{\delta}$ and the $m^{\otimes}$-expansions
of $m_{\sigma}^{\otimes}F_{\delta}$ where $F$ is $P$, $H$, $E^+$, or $E$.

\section{Expansions in the $s^{\otimes}$ Basis}
\label{sec:s-expand}

Recall that $\{s_{\lambda}\}$ is the Schur basis of $\Lambda$, and
$\{s_{\tau}^{\otimes}\}$ is the associated pure tensor basis of $\PLambda$.
This section provides combinatorial formulas for the coefficients
in the $s^{\otimes}$-expansions of $s^{\otimes}_{\sigma}F$ where
$F$ is $P_{d^m}$, $H_{d^m}$, $E^+_{d^m}$, $E_{d^m}$, or any product
of such factors. As special cases, we find the transition matrices
$\mcM(P,s^{\otimes})$, $\mcM(H,s^{\otimes})$, $\mcM(E^+,s^{\otimes})$, 
and $\mcM(E,s^{\otimes})$.

\subsection{Rule for $s^{\otimes}_{\sigma}P_{d^m}$.}
\label{subsec:s-times-P1}

Before stating the rule for the $s^{\otimes}$-expansion of 
$s^{\otimes}_{\sigma}P_{d^m}$, we review the analogous classical
rule for the Schur expansion of $s_{\mu}p_k$. Given an integer partition 
$\mu=(\mu_1\geq\mu_2\geq\cdots\geq\mu_s)$, the \emph{diagram of $\mu$}
is the set $\dg(\mu)=\{(i,j)\in\Z^2: 1\leq i\leq s,\ 1\leq j\leq \mu_i\}$.
We visualize the diagram of $\mu$ by drawing $s$ rows of left-justified
unit boxes with $\mu_i$ boxes in the $i$th row from the top. 
The \emph{conjugate partition} $\mu'$ is the partition whose diagram
is obtained from $\dg(\mu)$ by interchanging rows and columns.
Given $\mu$ and another integer partition $\nu$ such that
$\dg(\mu)\subseteq\dg(\nu)$, the \emph{skew shape} $\nu/\mu$
is the set difference $\dg(\nu)\setminus\dg(\mu)$. We visualize 
a skew shape as the collection of boxes in the diagram for $\nu$
that are outside the diagram for $\mu$. A skew shape $\nu/\mu$
is a \emph{$k$-ribbon} (or a \textit{$k$-rim-hook} or a \textit{$k$-border strip}) if it consists of $k$ boxes that can be
labeled $b_1,\ldots,b_k$ so that, for $1<i\leq k$, $b_i$ is one unit left 
of $b_{i-1}$ or one unit below $b_{i-1}$.  Equivalently, this 
means that $\nu/\mu$ is a connected strip of $k$ boxes on the southeast 
border of $\dg(\nu)$ that contains no $2\times 2$ square.
The \emph{sign} of a $k$-ribbon $\nu/\mu$ that has boxes in $r$ different rows
is $\sgn(\nu/\mu)=(-1)^{r-1}$. The next result is often called the
Murnaghan--Nakayama Rule, the Pieri Rule for Power-Sums, or the Slinky Rule.

\begin{prop}~\cite[Theorem 10.46]{loehr-comb}.\label{prop:slinky}
For any integer partition $\mu$ and positive integer $k$,
\[ s_{\mu}p_k=\sum_{\nu:\ \nu/\mu\mbox{ is a $k$-ribbon}} 
\sgn(\nu/\mu)s_{\nu}.\]
\end{prop}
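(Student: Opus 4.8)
The plan is to prove the Murnaghan--Nakayama Rule (\cref{prop:slinky}) by a classical determinantal argument built on the Jacobi--Trudi identity together with the sign-reversing involution that clears nearly all terms in the relevant determinant expansion. First I would recall the Jacobi--Trudi identity, writing $s_{\mu}=\det(h_{\mu_i-i+j})_{1\le i,j\le \ell}$ for $\ell=\ell(\mu)$ (padding with zero parts as needed), so that $s_{\mu}p_k$ becomes a sum over the symmetric group $S_{\ell}$ of signed products $\sgn(w)p_k\prod_i h_{\mu_i-i+w(i)}$. The goal is to show that after multiplying by $p_k$ and re-expanding back into Schur functions, only the $k$-ribbon terms survive with the stated sign.

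The engine of the proof is the one-variable (or rather one-row) identity $h_j p_k=h_{j+k}-\sum\,(\text{shifted terms})$; more precisely I would use the fundamental relation $p_k h_{a}$ and its effect on a single factor of the Jacobi--Trudi matrix, but it is cleaner to invoke the bialternant/Lindstr\"om--Gessel--Viennot picture. Concretely, I would set up lattice paths realizing the determinant $\det(h_{\mu_i-i+j})$ as families of non-intersecting paths, with the $i$th path starting at height $\mu_i-i$ and running to a fixed endpoint. Multiplying by $p_k=\sum_i x_i^k$ corresponds to inserting a single ``long step'' of length $k$ into exactly one of the paths. The key combinatorial fact is that after applying the standard sign-reversing involution (swapping the two path-segments at their first crossing) to the resulting path families, the surviving non-intersecting configurations are exactly those in which the long step has been appended to the outer boundary in a way that grows $\mu$ by a connected border strip of size $k$; a configuration with two rows touched by the insertion would force an intersection and thus cancel.

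I would then translate the surviving configurations back into the language of skew shapes: each survivor corresponds to a partition $\nu\supseteq\mu$ with $|\nu/\mu|=k$ and $\nu/\mu$ a connected ribbon containing no $2\times 2$ block, which is precisely a $k$-ribbon. The sign bookkeeping is the delicate point: the $\sgn(w)$ from the determinant, combined with the displacement of the path endpoints caused by the length-$k$ insertion, must collapse to exactly $(-1)^{r-1}$ where $r$ is the number of rows the ribbon occupies. The cleanest way to see this is to note that adding a $k$-ribbon spanning rows $i$ down to $i+r-1$ corresponds to a cyclic shift of those $r$ endpoint-heights, which contributes the sign of an $r$-cycle, namely $(-1)^{r-1}$.

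The main obstacle I expect is precisely this sign computation: verifying that the permutation sign $\sgn(w)$ paired with the reindexing forced by the insertion always yields $(-1)^{r-1}$, uniformly over all $\mu$ and all admissible $\nu$, without off-by-one errors coming from zero parts or from ribbons that touch the first or last row. An alternative route that sidesteps the lattice-path machinery would be to argue by induction on $k$ using the Pieri rule for $s_{\mu}h_1$ together with the Newton-type recursion relating $p_k$ to $h$'s and lower $p$'s; but that tends to obscure the clean ribbon structure, so I prefer the non-intersecting-paths proof, isolating the sign lemma as the one technical step that needs care.
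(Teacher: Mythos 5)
First, a point of comparison: the paper does not prove \cref{prop:slinky} at all --- it is quoted from the literature (Loehr, \emph{Combinatorics}, Theorem 10.46) and used as a black box to derive the polysymmetric rules in Section 2. So there is no ``paper proof'' to measure your argument against; it has to stand on its own.

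Judged that way, your strategy (a determinantal expression for $s_{\mu}$, multiplication by $p_k$, a cancellation argument, and a re-sorting permutation producing the sign) is one of the classical routes, and your final sign claim is the right one: the row that absorbs the $k$ new boxes must be moved past $r-1$ others, an $r$-cycle, giving $(-1)^{r-1}$. But two steps, as written, would fail. (1) The LGV involution does not survive the ``inserted long step.'' If you model the factor $x_m^k$ as $k$ extra unit east steps at height $m$ inside one path, a crossing can occur in the middle of that marked run, and swapping tails there severs the run between two paths, so the map is not a well-defined involution on your configuration space; if instead you make the insertion a single indivisible length-$k$ edge, then shared lattice points interior to that edge admit no tail swap at all. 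This obstruction is exactly why the standard determinantal proofs work with the bialternant rather than Jacobi--Trudi paths: in finitely many variables, $a_{\mu+\delta}\,p_k=\sum_j a_{\mu+\delta+k\epsilon_j}$, so multiplying by $p_k$ adds $k$ to a \emph{single exponent}, terms with a repeated exponent vanish by antisymmetry (no involution needed), and re-sorting the exponent vector simultaneously yields the ribbon shape and the sign $(-1)^{r-1}$. You cite ``bialternant/LGV'' as if interchangeable, but only the bialternant version avoids this difficulty cleanly. (2) Your intermediate claim that ``a configuration with two rows touched by the insertion would force an intersection and thus cancel'' contradicts your own conclusion: the surviving terms are precisely ribbons spanning $r\geq 1$ rows, so multi-row contact is not what cancels --- colliding (equal) exponents are. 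Finally, the sign computation that you correctly identify as the crux is deferred rather than carried out; it is true and provable by exactly the cyclic-shift count you describe, but as it stands the proposal is a plan whose central lemma is asserted, not proven.
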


\begin{example}
\boks{0.4}
\ytableausetup{aligntableaux = top}
We compute $s_{(3,2)}p_4=s_{(7,2)}-s_{(5,4)}-s_{(3,3,3)}+s_{(3,2,2,1,1)}
 -s_{(3,2,1,1,1,1)}$ using the following diagrams,
where the boxes in the $4$-ribbon $\nu/\mu$ are shaded in gray.
\[ \y{3,2}*[*(lightgray)]{3+4} \qquad 
   \y{3,2}*[*(lightgray)]{3+2,2+2}\qquad
    \y{3,2}*[*(lightgray)]{0,2+1,3}\qquad
   \y{3,2}*[*(lightgray)]{0,0,2,1,1}\qquad
 \y{3,2}*[*(lightgray)]{0,0,1,1,1,1}\]
\end{example}

Turning to the polysymmetric case,
let $\sigma=(1^{\sigma|_1}2^{\sigma|_2}\cdots i^{\sigma|_i}\cdots)$ be
a fixed type. The \emph{tensor diagram of $\sigma$} is the formal symbol
\[ \dg(\sigma)=\dg(\sigma|_1)\otimes\dg(\sigma|_2)\otimes\cdots
\otimes\dg(\sigma|_i)\otimes\cdots. \]
We draw $\dg(\sigma)$ as a succession of partition diagrams joined
by tensor signs; we draw $\vn$ in any position $i$ where $\sigma|_i$
is the empty partition. For example, the diagram of 
$\sigma=(1^{3,3,1}2^{2,1,1,1}4^{3,2,2,1})$ is
\\ \ytableausetup{aligntableaux = top}
\[ \y{3,3,1}\,\otimes\,\y{2,1,1,1}\,\otimes\,\varnothing\,\otimes\,
   \y{3,2,2,1} \]
The next theorem computes $s_{\sigma}^{\otimes}P_{d^m}$ by adding certain
signed weighted ribbons to $\dg(\sigma)$ according to particular rules.
If $R$ is a ribbon added to the shape in position $i$ of the tensor
diagram, we let $\wt(R)=i$.

\begin{theorem}\label{thm:s-times-P1}
For any type $\sigma$ and block $d^m$,
\[ s_{\sigma}^{\otimes}P_{d^m}=\sum_{\tau} \sgn(R)\wt(R)s_{\tau}^{\otimes}, \]
where we sum over types $\tau$ that arise from $\sigma$ by adding
a $(dm/k)$-ribbon $R$ to $\dg(\sigma|_k)$ for some $k>0$ that divides $d$.
\end{theorem}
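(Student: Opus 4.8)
The plan is to exploit the tensor structure of $\PLambda$ together with the classical Murnaghan--Nakayama rule (\cref{prop:slinky}). I would begin by unwinding the definition of $P_{d^m}$. Substituting $x_{ij}\mapsto x_{ij}^m$ into the defining formula~\eqref{eq:Pd} for $P_d$ gives
\[ P_{d^m}=\sum_{k\mid d} k\sum_{j\geq 1} x_{k,j}^{dm/k}
         =\sum_{k\mid d} k\, p_{dm/k}(\xx_{k*}), \]
where $p_{dm/k}(\xx_{k*})=\sum_{j\geq 1}x_{k,j}^{dm/k}$ is an ordinary power-sum symmetric function living entirely in the $k$th tensor factor $\Lambda_{(k)}$. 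Here $dm/k$ is a positive integer because $k\mid d$, and each term is homogeneous of polysymmetric degree $k\cdot(dm/k)=dm$, as it must be. Thus $P_{d^m}$ is a weighted sum of power sums, one per divisor $k$ of $d$, with each summand supported in a single tensor factor.

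Next I would use that multiplication in $\PLambda=\Lambda_{(1)}\otimes\Lambda_{(2)}\otimes\cdots$ is performed factorwise. Writing $s_\sigma^\otimes=\prod_{i\geq 1} s_{\sigma|_i}(\xx_{i*})$ and multiplying by the factor-$k$ element $p_{dm/k}(\xx_{k*})$ alters only the $k$th tensor slot:
\[ s_\sigma^\otimes\, p_{dm/k}(\xx_{k*})
   =\Bigl(\prod_{i\neq k} s_{\sigma|_i}(\xx_{i*})\Bigr)\,
    \bigl(s_{\sigma|_k}(\xx_{k*})\,p_{dm/k}(\xx_{k*})\bigr). \]
Applying \cref{prop:slinky} inside $\Lambda_{(k)}\cong\Lambda$ to the product $s_{\sigma|_k}\,p_{dm/k}$ expands it as $\sum_\nu \sgn(\nu/\sigma|_k)\,s_\nu(\xx_{k*})$, summed over all $\nu$ for which $\nu/\sigma|_k$ is a $(dm/k)$-ribbon $R$. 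Each such $\nu$ produces the pure tensor $s_\tau^\otimes$, where $\tau$ is the type obtained from $\sigma$ by replacing $\sigma|_k$ with $\nu$ and leaving the remaining components $\sigma|_i$ ($i\neq k$) unchanged --- that is, by adding the ribbon $R$ to $\dg(\sigma|_k)$ in position $k$ of the tensor diagram.

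Finally I would reassemble and reindex. Summing over all divisors $k$ of $d$ yields
\[ s_\sigma^\otimes P_{d^m}
   =\sum_{k\mid d} k\sum_{\substack{\nu:\ \nu/\sigma|_k\\ \text{a }(dm/k)\text{-ribbon }R}}
    \sgn(R)\,s_\tau^\otimes. \]
Since the ribbon $R$ occupies position $k$ of the tensor diagram we have $\wt(R)=k$, so the scalar $k$ attached to each term is exactly $\wt(R)$. Furthermore the pair $(k,R)$ is recovered from $\tau$: the index $k$ is the unique tensor position in which $\tau$ differs from $\sigma$, and $R=\tau|_k/\sigma|_k$; hence each admissible $\tau$ arises from a single $(k,R)$, the coefficient $\sgn(R)\wt(R)$ is unambiguous, and the expansion agrees with the claimed $\sum_\tau \sgn(R)\wt(R)s_\tau^\otimes$ over the stated index set. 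The only real content beyond routine tensor-algebra bookkeeping is the opening identity $P_{d^m}=\sum_{k\mid d}k\,p_{dm/k}(\xx_{k*})$, which is the source of both the divisor sum and the weights; once it is in hand, the theorem follows by applying the classical rule slot by slot, and I expect this decomposition of $P_{d^m}$ to be the crux of the argument.
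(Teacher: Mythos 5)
Your proposal is correct and follows essentially the same route as the paper's proof: both rest on the decomposition $P_{d^m}=\sum_{k\mid d}k\,p_{dm/k}(\xx_{k*})$ (the paper's equation~\eqref{eq:Pdm-in-p}) followed by a slot-by-slot application of \cref{prop:slinky}, with the divisor $k$ accounting for the weight $\wt(R)=k$. Your closing observation that each admissible $\tau$ determines the pair $(k,R)$ uniquely is a small clarification the paper leaves implicit, but it does not change the argument.
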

\begin{proof}
Combining~\eqref{eq:Pd} with the subsequent definition of $P_{d^m}$, we find %
\begin{equation}\label{eq:Pdm-in-p} 
 P_{d^m}=\sum_{k|d} k\sum_{j\geq 1} x_{k,j}^{dm/k}
          =\sum_{k|d} kp_{dm/k}(\xx_{k*})
          =\sum_{k|d} 1\otimes\cdots\otimes
 1\otimes kp_{dm/k}\otimes 1\otimes\cdots, 
\end{equation} 
where $kp_{dm/k}$ occurs in the $k$th tensor factor.
Multiplying $s_{\sigma}^{\otimes}$ by this expression, we get
\[ s_{\sigma}^{\otimes}P_{d^m}
  =\sum_{k|d} s_{\sigma|_1}\otimes s_{\sigma|_2}\otimes\cdots
 \otimes s_{\sigma|_k}\cdot kp_{dm/k}\otimes s_{\sigma|_{k+1}}\otimes\cdots. \]
For a fixed choice of $k$ dividing $d$,
the classical Pieri rule replaces the factor $s_{\sigma|_k}p_{dm/k}$ by
the sum of $\sgn(\nu/(\sigma|_k))s_{\nu}$ over all $\nu$ such that
$\nu/(\sigma|_k)$ is a $(dm/k)$-ribbon. We weight such a ribbon by $k$
to account for the extra factor of $k$. Adding over all choices of $k$
gives the formula in the theorem.
\end{proof}

\begin{example} 
Let $\sigma=(3^22^11^41^3)$.  We compute $s_{\sigma}^{\otimes}P_{3^2}$ 
using the following diagrams, where the boxes in the
newly added ribbons are shaded in gray.
\ytableausetup{aligntableaux = top}
\[ 
\y{4,3}*[*(lightgray)]{4+6} \,\otimes \,
\y{1} \,\otimes \,
\y{2}\qquad
\y{4,3}*[*(lightgray)]{4+4,3+2} \,\otimes \,
\y{1}\, \otimes\,
\y{2}\qquad
\y{4,3}*[*(lightgray)]{0,3+1,4,1} \,\otimes \,
\y{1} \,\otimes \,
\y{2} \]
\[
\y{4,3}*[*(lightgray)]{0,0,3,1,1,1} \,\otimes \,
\y{1} \,\otimes \,
\y{2} \qquad
\y{4,3}*[*(lightgray)]{0,0,2,1,1,1,1} \,\otimes \,
\y{1} \,\otimes \,
\y{2}\qquad
\y{4,3}*[*(lightgray)]{0,0,1,1,1,1,1,1} \,\otimes \,
\y{1} \,\otimes \,
\y{2} 
\]
\[\y{4,3}\, \otimes \, \y{1}\, \otimes \, \y{2}*[*(lightgray)]{2+2} \qquad
\y{4,3}\, \otimes \, \y{1}\, \otimes \, \y{2}*[*(lightgray)]{0,2}\qquad
\y{4,3}\, \otimes \, \y{1}\, \otimes \, \y{2}*[*(lightgray)]{0,1,1}\qquad
\]
The answer is $s_{(3^22^11^{10,3})}^{\otimes}
-s_{(3^22^11^{8,5})}^{\otimes}
+s_{(3^22^11^{4,4,4,1})}^{\otimes}
-s_{(3^22^11^{4,3,3,1,1,1})}^{\otimes}
+s_{(3^22^11^{4,3,2,1,1,1,1})}^{\otimes}
-s_{(3^22^11^{4,3,1,1,1,1,1,1})}^{\otimes}
+3s_{(3^42^11^{4,3})}^{\otimes}
+3s_{(3^{2,2}2^11^{4,3})}^{\otimes}
-3s_{(3^{2,1,1}2^11^{4,3})}^{\otimes}$.
In contrast, when computing $s_{\sigma}^{\otimes}P_{2^3}$,
we keep the first six diagrams but replace the last three diagrams by these:
\[
  \y{4,3}\, \otimes \, \y{1}*[*(lightgray)]{1+3}\, \otimes\, \y{2}\qquad
    \y{4,3}\, \otimes \, \y{1}*[*(lightgray)]{1+1,2}\, \otimes\, \y{2}\qquad
    \y{4,3}\, \otimes \, \y{1}*[*(lightgray)]{0,1,1,1}\, \otimes\, \y{2}
\]
The new terms are $+2s_{(3^22^41^{4,3})}^{\otimes}
-2s_{(3^22^{2,2}1^{4,3})}^{\otimes}
+2s_{(3^22^{1,1,1,1}1^{4,3})}^{\otimes}$.
\end{example}

\subsection{Rule for $s_{\sigma}^{\otimes}P_{\delta}$ 
and $\mcM(P,s^{\otimes})$.}
\label{subsec:s-times-P2}

Let $\alpha=(\alpha_1,\ldots,\alpha_s)$ be a list of positive integers.
Iteration of~\cref{prop:slinky} leads to the classical
Schur expansion of $s_{\mu}p_{\alpha}$ in terms of rim hook tableaux,
which we now describe. A \emph{rim hook tableau (RHT)} of \emph{shape}
$\lambda/\mu$ and \emph{content} $\alpha$ is a sequence of partitions
$\mu=\nu^0,\nu^1,\nu^2,\ldots,\nu^s=\lambda$ such that $\nu^i/\nu^{i-1}$
is an $\alpha_i$-ribbon for $1\leq i\leq s$. We visualize this skew RHT
by drawing the skew shape $\lambda/\mu$ and filling the boxes in the ribbon
$\nu^i/\nu^{i-1}$ with the value $i$. The sign of the RHT is the
product of the signs of all the ribbons appearing in it.
The coefficient of $s_{\lambda}$ in $s_{\mu}p_{\alpha}$ is the signed sum 
of all RHT of shape $\lambda/\mu$ and content $\alpha$.
For example, here is one RHT that contributes $+1$ to the coefficient of
$s_{(4,4,4,4,1)}$ in $s_{(3,2)}p_{(4,2,3,3)}$.
\[ \tableau{ {}&{}&{}&2
          \\ {}&{}&1 &2
          \\ 1 & 1&1& 4
          \\ 3 & 3&4&4
          \\ 3 } \]

We get an analogous result for polysymmetric functions by iterating
\cref{thm:s-times-P1}. Let $\delta=(d_1^{m_1},\ldots,d_s^{m_s})$
be an ordered sequence of blocks.
A \emph{tensor rim hook tableau (TRHT)} of \emph{shape} $\tau/\sigma$
and \emph{content} $\delta$ is a sequence of types
$\sigma=\tau^0,\tau^1,\tau^2,\ldots,\tau^s=\tau$ such that,
for $1\leq i\leq s$, $\tau^i$ arises from $\tau^{i-1}$ by adding a 
$d_im_i/k_i$-ribbon $R_i$ to $\dg(\tau^{i-1}|_{k_i})$ for some $k_i$ 
dividing $d_i$. Let $\TRHT(\tau/\sigma,\delta)$ be the set of such objects.
Write $\TRHT(\tau,\delta)$ when $\sigma$ is the empty type.
The sign (resp. weight) of a TRHT is the product of the signs (resp. weights)
of all ribbons appearing in it. If the TRHT has $r_k$ ribbons in the shape
in tensor position $k$ for each $k$, then the weight of the TRHT is
$\prod_{k\geq 1} k^{r_k}$. As with RHT, we visualize a TRHT by filling
all cells in ribbon $R_i$ with the value $i$. This discussion proves
the following theorem.

\begin{theorem}\label{thm:s-times-P2}
For any type $\sigma$ and sequence $\delta=(d_1^{m_1},\ldots,d_s^{m_s})$,
\[ s_{\sigma}^{\otimes}P_{\delta}=\sum_{\tau} 
 \left[\sum_{T\in\TRHT(\tau/\sigma,\delta)} \sgn(T)\wt(T)\right]
  s_{\tau}^{\otimes}. \]
\end{theorem}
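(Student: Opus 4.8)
The plan is to prove the formula by induction on $s$, the number of blocks in $\delta$, using \cref{thm:s-times-P1} as the single-factor engine. The base case $s=0$ reduces to the identity $s_\sigma^\otimes=s_\sigma^\otimes$: here $\delta$ is the empty sequence, $P_\delta=1$, and the only tensor rim hook tableau of shape $\tau/\sigma$ and empty content is the empty chain, which forces $\tau=\sigma$ and has sign and weight both equal to $1$. Hence the right-hand sum collapses to $s_\sigma^\otimes$, as required.

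For the inductive step I would set $\delta'=(d_1^{m_1},\ldots,d_{s-1}^{m_{s-1}})$, so that $P_\delta=P_{\delta'}P_{d_s^{m_s}}$. First I would apply the inductive hypothesis to expand $s_\sigma^\otimes P_{\delta'}$ in the $s^\otimes$ basis, indexed by intermediate types $\rho$. Then I would multiply each resulting term $s_\rho^\otimes$ by $P_{d_s^{m_s}}$ using \cref{thm:s-times-P1}, which replaces $s_\rho^\otimes$ by a signed, weighted sum over those types $\tau$ obtained from $\rho$ by adjoining one further ribbon $R$, namely a $(d_sm_s/k_s)$-ribbon placed in tensor position $k_s$ for some $k_s\mid d_s$. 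Collecting the coefficient of a fixed $s_\tau^\otimes$ then yields a double sum over pairs $(T',R)$, where $T'$ ranges over $\TRHT(\rho/\sigma,\delta')$ for some intermediate $\rho$ and $R$ is an admissible terminal ribbon carrying $\rho$ to $\tau$.

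The crux is a bijection argument. Each $T\in\TRHT(\tau/\sigma,\delta)$ is, by definition, a chain $\sigma=\tau^0,\tau^1,\ldots,\tau^s=\tau$; deleting its final step produces a unique pair consisting of the truncated chain $T'=(\tau^0,\ldots,\tau^{s-1})\in\TRHT(\tau^{s-1}/\sigma,\delta')$ together with the terminal ribbon $R$ carrying $\rho:=\tau^{s-1}$ to $\tau$. This correspondence is a bijection between $\TRHT(\tau/\sigma,\delta)$ and the set of admissible pairs $(T',R)$ appearing in the double sum, with inverse given simply by concatenation. I would then observe that both statistics are multiplicative under this splitting: since $\sgn$ and $\wt$ of a TRHT are defined as products over its constituent ribbons, one has $\sgn(T)=\sgn(T')\sgn(R)$ and $\wt(T)=\wt(T')\wt(R)$. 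Substituting these factorizations into the double sum converts it into $\sum_{T\in\TRHT(\tau/\sigma,\delta)}\sgn(T)\wt(T)$, which completes the inductive step.

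I do not anticipate a serious obstacle here: \cref{thm:s-times-P1} already performs the genuine work, and everything that remains is the bookkeeping needed to confirm that appending one ribbon at a time reconstitutes exactly the TRHT data. The one point meriting minor care is verifying that the weight $\prod_{k\ge1}k^{r_k}$ is truly multiplicative over ribbons, i.e.\ that the exponent counts $r_k$ add when chains are concatenated, and that distinct pairs $(T',R)$ never collapse to the same chain. Both facts are immediate from the definitions, so the argument is essentially an organized iteration of the single-factor Pieri-type rule.
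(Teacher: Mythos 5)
Your proposal is correct and takes essentially the same route as the paper: the paper also obtains this theorem by iterating \cref{thm:s-times-P1}, noting that the chains of types produced by successive multiplications are exactly the TRHTs, with signs and weights multiplying over the constituent ribbons. Your write-up merely formalizes that iteration as an induction on $s$ with the evident deletion/concatenation bijection, which is the same argument in more explicit form.
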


\begin{example}
The TRHT shown below contributes 
$(-1)^3\cdot 2\cdot 3^2\cdot 4=-72$ 
to the coefficient of $s^{\otimes}_{(1^{2,2,2}2^{2,2,2}3^{2,2}4^{3,2})}$ 
in $s^{\otimes}_{(1^{2,1}2^{1,1}4^{2,2})}P_{(4^2,3^2,6^1,3^1,4^1)}$.
\ytableausetup{aligntableaux = top}
\[ \yt{{}{},{}4,44} \otimes \yt{{}1,{}1,11}
   \otimes \yt{22,33} \otimes \yt{{}{}5,{}{}} \]
\end{example}

Starting with $s_0^{\otimes}=1$ and multiplying by $P_{\sigma}$,
we obtain the following transition matrix. 

\begin{corollary}\label{cor:M(P,s-tensor)}
For all types $\sigma,\tau\vdt n$, the coefficient of $s_{\tau}^{\otimes}$
in the $s^{\otimes}$-expansion of $P_{\sigma}$ is
$$\mcM(P,s^{\otimes})_{\tau,\sigma}=\sum_{T\in\TRHT(\tau,\sigma)}
 \sgn(T)\wt(T).$$
\end{corollary}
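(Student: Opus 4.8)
The plan is to obtain this corollary as an immediate specialization of \cref{thm:s-times-P2}, exactly as the sentence preceding the statement suggests. The key observation is that the empty type $\vn$ satisfies $s_{\vn}^{\otimes}=1$: each tensor factor is $s_{\vn}(\xx_{d*})=1$, so the product is $1$ (this is the ``$s_0^{\otimes}=1$'' of the preceding remark). Hence I would apply \cref{thm:s-times-P2} with $\vn$ in place of $\sigma$ and with $\delta=\sigma$, viewing the type $\sigma$ as its canonical weakly decreasing sequence of blocks; the left-hand side $s_{\vn}^{\otimes}P_{\sigma}$ then collapses to $P_{\sigma}$.

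First I would record that, by the definition of $P_{\delta}$, the element $P_{\sigma}$ attached to a type $\sigma$ is literally $P_{\delta}$ with $\delta=\sigma$, so \cref{thm:s-times-P2} applies verbatim. Next, invoking the stated convention $\TRHT(\tau,\sigma)=\TRHT(\tau/\vn,\sigma)$, the theorem gives
\[ P_{\sigma}=\sum_{\tau}\left[\sum_{T\in\TRHT(\tau,\sigma)}\sgn(T)\wt(T)\right]s_{\tau}^{\otimes}. \]
Finally, comparing this with the defining equation \eqref{eq:trans-mat2} for $\mcM(P,s^{\otimes})$ and using that $\{s_{\tau}^{\otimes}\}$ is a basis (so the expansion is unique), I would read off that $\mcM(P,s^{\otimes})_{\tau,\sigma}$ equals the bracketed signed weighted sum. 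The restriction to $\tau\vdt n$ is automatic: every $T\in\TRHT(\tau,\sigma)$ builds $\tau$ from $\vn$ by adding ribbons whose total number of boxes forces $|\tau|=|\sigma|=n$, so only types of weight $n$ can receive a nonzero coefficient.

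Since the argument is a one-line specialization, there is no genuine obstacle; the only point requiring a moment's care is the bookkeeping that the ordered sequence $\delta=\sigma$ used as the ``content'' in \cref{thm:s-times-P2} agrees with the type $\sigma$ indexing $P_{\sigma}$, and that the empty type is a legitimate starting point for the iteration defining a TRHT. I might also remark, as a byproduct, that the signed weighted count $\sum_{T}\sgn(T)\wt(T)$ is independent of the order in which the blocks of $\sigma$ are listed, since it equals the intrinsically defined matrix entry $\mcM(P,s^{\otimes})_{\tau,\sigma}$.
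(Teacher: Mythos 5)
Your proof is correct and matches the paper's own argument exactly: the paper likewise obtains this corollary by taking $s_{\vn}^{\otimes}=1$ and multiplying by $P_{\sigma}$ via \cref{thm:s-times-P2}, then reading off coefficients against the basis $\{s_{\tau}^{\otimes}\}$. Your additional bookkeeping (that $\delta=\sigma$ as a block sequence, and that only $\tau\vdt n$ receive nonzero coefficients) is sound and simply makes explicit what the paper leaves implicit.
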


\ytableausetup{aligntableaux = top}
\begin{example} 
We compute the $s^\otimes$-expansion of $P_{(2^1, 1^2)}$. 
Creating the tensor rim hook tableaux according to the rules above, 
we get the following eight objects. \boks{0.4}
\[ \begin{array}{cccc}
             \yt{11,2,2} \otimes \vn \quad &
             \yt{1122} \otimes \vn \quad &
             \yt{12,12} \otimes \vn \quad &
             \yt{1,1,2,2} \otimes \vn \\[1.2cm]
              \yt{11,22} \otimes \vn &
              \yt{122,1} \otimes \vn &
              \yt{22} \otimes \yt{1} &
              \yt{2,2} \otimes \yt{1}
\end{array} \]
These give us the expansion 
\[
P_{(2^11^2)} = -s^\otimes_{(1^{2,1,1})} 
               +s^\otimes_{(1^4)} 
             + 2s^\otimes_{(1^{2,2})} 
              + s^\otimes_{(1^{1,1,1,1})} 
              - s^\otimes_{(1^{3,1})} 
             + 2s^\otimes_{(1^22^1)} 
              -2s^\otimes_{(1^{1,1}2^1)}. 
\]
\end{example}

\subsection{Rule for $s^{\otimes}_{\sigma}H_{d^r}$.}
\label{subsec:s-times-H1}
In order to understand the effect of multiplying $s^\otimes_\sigma$ 
by $H_{d^r}$, we express $H_{d^r}$ in terms of $h^\otimes$ 
and then use the plethystic Murnaghan--Nakayama Rule. 
We recall that \textit{plethysm} is a binary operation,
mapping an ordered pair $(f,g)$ of symmetric functions to an output 
$f[g]\in\Lambda$, which satisfies the \textit{Monomial Substitution Rule}: 
for any power-sum $p_n$ and $f=f(\xx)\in\Lambda$, $f[p_n]=f(\xx^n)$.
Plethysm appears in our discussion of $\PLambda$
since $H_{d^r}(\xx_{**}) = H_d(\xx_{**}^r)$. 
We shall only need the Monomial Substitution Rule here,
but readers interested in knowing more about plethysm 
may refer to~\cite{pleth-expose}. Note that $f[p_n]=p_n[f]$ for all $n$.

\begin{prop}\label{prop:H-expansions}
For nonnegative integers $d$ and $r$, the following expansions hold.
\begin{enumerate}[(a)]
\item $H_d = \sum\limits_{\lambda\vdash d} 
h_{m_1(\lambda)}(\xx_{1*})h_{m_2(\lambda)}(\xx_{2*})\cdots 
h_{m_k(\lambda)}(\xx_{k*})\cdots
= \sum\limits_{\lambda\vdash d} 
h_{m_1(\lambda)} \otimes h_{m_2(\lambda)} \otimes \ldots
 \otimes h_{m_k(\lambda)}\otimes\cdots$.
\item \begin{align*} H_{d^r} &= \sum\limits_{\lambda\vdash d} 
h_{m_1(\lambda)}(\xx^r_{1*})h_{m_2(\lambda)}(\xx^r_{2*})\cdots
h_{m_k(\lambda)}(\xx^r_{k*})\cdots
 \\ &= \sum\limits_{\lambda\vdash d} 
h_{m_1(\lambda)}[p_r] \otimes h_{m_2(\lambda)}[p_r] \otimes \cdots
\otimes h_{m_k(\lambda)}[p_r]\otimes\cdots. \end{align*}
\end{enumerate}
\end{prop}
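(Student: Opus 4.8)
The plan is to prove part (a) by a direct combinatorial classification of the monomials that make up $H_d$, and then to deduce part (b) from (a) by applying the substitution $x_{i,j}\mapsto x_{i,j}^r$ together with the Monomial Substitution Rule. The whole proposition is really a bookkeeping statement, so the work lies in setting up the classification in (a) cleanly.

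For part (a), I would begin from the defining formula~\eqref{eq:Hd}, which presents $H_d$ as the sum of all distinct monomials of degree $d$ in the variables $\xx_{**}$. Writing a general monomial as $\prod_{i,j} x_{i,j}^{a_{i,j}}$, its degree is $\sum_{i,j} i\,a_{i,j}$ because $\deg(x_{i,j})=i$. The key step is to sort the variables by degree: for each $i$, set $b_i=\sum_j a_{i,j}$, the total exponent carried by the degree-$i$ variables $\xx_{i*}$. Then the degree of the monomial equals $\sum_i i\,b_i$, so the condition that the monomial have degree $d$ is precisely the condition that $\lambda=(1^{b_1}2^{b_2}\cdots)$ be a partition of $d$, with $m_i(\lambda)=b_i$. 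Next I would observe that, for a fixed degree $i$ and fixed total exponent $b_i$, the sum of all monomials in $\xx_{i*}$ with that total exponent is exactly the complete symmetric function $h_{b_i}(\xx_{i*})$, by definition. Consequently, the monomials of degree $d$ whose degree-$i$ part has total exponent $b_i$ for every $i$ sum to the product $\prod_i h_{b_i}(\xx_{i*})$. Summing over all admissible sequences $(b_i)$---equivalently, over all partitions $\lambda\vdash d$---recovers the claimed expression, once I check that assigning to each degree-$d$ monomial the partition recording its per-degree total exponents gives a bijection onto the indexing data, so that every monomial is produced exactly once.

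For part (b), I would apply the substitution $x_{i,j}\mapsto x_{i,j}^r$ to the identity just proved in (a). By definition $H_{d^r}=H_d(\xx_{**}^r)$, so the left side becomes $H_{d^r}$, while each factor $h_{m_i(\lambda)}(\xx_{i*})$ on the right becomes $h_{m_i(\lambda)}(\xx_{i*}^r)$. Invoking the Monomial Substitution Rule $f[p_r]=f(\xx^r)$ inside the $i$th tensor factor $\Lambda_{(i)}$ identifies $h_{m_i(\lambda)}(\xx_{i*}^r)$ with the plethysm $h_{m_i(\lambda)}[p_r]$, which gives the stated formula. I do not expect a genuine obstacle in this argument; the one place that rewards care is the bijection in part (a)---making sure the grouping by degree is well defined and that the resulting sum over $\lambda\vdash d$ reproduces each monomial of $H_d$ with multiplicity one. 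Once (a) is established in this form, (b) is a purely formal consequence of the single plethystic identity we need.
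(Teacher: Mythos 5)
Your proposal is correct and follows essentially the same route as the paper: part (a) is proved by classifying the monomials of $H_d$ according to the partition $\lambda\vdash d$ recording the total exponent of the degree-$i$ variables for each $i$, and part (b) is deduced from (a) via the definition $H_{d^r}=H_d(\xx_{**}^r)$ and the Monomial Substitution Rule $f[p_r]=f(\xx^r)$. The only cosmetic difference is that you phrase (a) as a partition of the set of monomials by exponent profile, while the paper argues the two containments separately; the content is identical.
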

\begin{proof}
To prove part~(a),
consider the summand on the right side indexed by the partition 
$\lambda = (1^{m_1(\lambda)}2^{m_2(\lambda)}\ldots k^{m_k(\lambda)}\ldots)$. 
We know that for each $i$, any monomial that appears in 
$h_{m_i(\lambda)}(\xx_{i*})$ is a product of $m_i(\lambda)$ variables 
chosen (with repetition allowed) from the variable set $\xx_{i*}$. Thus, 
any monomial in $h_{m_1(\lambda)}(\xx_{1*})h_{m_2(\lambda)}(\xx_{2*})\ldots$
is a product of $m_k(\lambda)$ variables from $\xx_{k*}$ (for each
$k$) and has degree $\sum\limits_{k\geq 1} km_k(\lambda)=|\lambda|=d$. 
This shows that each term in the sum on the right side of~(a) appears 
in the expansion of $H_d$. 
To show that these are the only possible terms, we observe that any monomial 
$f$ of degree $d$ in variables $\{x_{ij}\}_{i,j\geq 1}$ can be expressed as 
a product $f_1(\xx_{1*})f_2(\xx_{2*})\cdots$ where each $f_k$ is a monomial 
in the variables $\xx_{k*}$ of degree $d_k$. 
Define $\lambda = (1^{d_1}2^{d_2}\cdots k^{d_k}\cdots)$. 
Then $f$ appears as a monomial in the product 
$h_{m_1(\lambda)}(\xx_{1*})h_{m_2(\lambda)}(\xx_{2*})\cdots$ 
in the summand indexed by $\lambda$ on the right side of~(a).

Part~(b) follows from the definition of $H_{d^r}$, part~(a), 
and the Monomial Substitution Rule for plethysm.
\end{proof}

\begin{example}
The partitions of $4$ are $(1^4)$, $(1^22^1)$, $(1^13^1)$, $(2^2)$, and $(4^1)$.
So $H_4=h_4\otimes 1\otimes 1\otimes 1
       +h_2\otimes h_1\otimes 1\otimes 1
       +h_1\otimes 1\otimes h_1\otimes 1
       +1\otimes h_2\otimes 1\otimes 1
       +1\otimes 1\otimes 1\otimes h_1$.
\end{example}

To compute $s^\otimes_\sigma H_{d^r}$, we need to understand the 
combinatorial objects that appear in the Schur expansion of 
$s_\mu \cdot h_n[p_r]$. The formula appears in~\cite[pg. 29]{DLT} 
and a combinatorial interpretation in terms 
of $r$-decomposable partitions was given by Wildon in~\cite{wildon1}.
We give a formula based on the notion of $r^n$-polyribbons
following the description in Turek~\cite{turek}. The notation $r^n$ does not
signify exponentiation but is meant to evoke the $n$-fold iteration
of the operation of adding an $r$-ribbon.

Here is the formal definition. Let $\gamma/\rho$ be a $k$-ribbon.
The \emph{top row} of $\gamma/\rho$, denoted by $\top(\gamma/\rho)$,
is the least row containing a cell of $\gamma/\rho$. 
A skew shape $\lambda/\mu$ is called an \emph{$r^n$-polyribbon}
if there exist partitions $\gamma_{(0)}$, $\gamma_{(1)}$, $\ldots$,
$\gamma_{(n)}$ such that:
\begin{equation}\label{eq:polyribbon-decomp}
\mu = \gamma_{(0)} \subseteq \gamma_{(1)} \subseteq 
\cdots \subseteq \gamma_{(n)} = \lambda,
\end{equation}
$\gamma_{(i)}/\gamma_{(i-1)}$ is an $r$-ribbon for $1\leq i \leq n$, 
and $\top(\gamma_{(i)}/\gamma_{(i-1)}) \geq \top(\gamma_{(i+1)}/\gamma_{(i)})$ 
for $1\leq i \leq n-1$. If $\lambda/\mu$ is an $r^n$-polyribbon,
then (as is readily checked) only one list $\gamma_{(0)},\ldots,\gamma_{(n)}$
satisfies the conditions stated here. Thus, we may define the \emph{sign}
of this $r^n$-polyribbon, written $\sgn_r(\lambda/\mu)$, to be
$\prod_{i=1}^n \sgn(\gamma_{(i)}/\gamma_{(i-1)})$.
If $\lambda/\mu$ is not an $r^n$-polyribbon for any $n$, 
then we set $\sgn_r(\lambda/\mu) = 0$.

\begin{remark}
The condition on top rows is equivalent to saying that the 
northeasternmost box of each inserted ribbon lies weakly north 
and strictly east
of the northeasternmost box of the previously inserted ribbon.
\end{remark}

\begin{example}
For $\mu = (5,5,1)$ and $\lambda = (7,6,6,4)$, $\lambda/\mu$ is a skew
shape denoted by the gray cells in the figure below.
\[
\y{5,5,1}*[*(gray)]{5+2,5+1,1 + 5,4}
\]
The skew shape $\lambda/\mu$
is a $4^3$-polyribbon as it can be constructed by adding three 4-ribbons 
according to the aforementioned rules as shown here:
   
    \ytableausetup{aligntableaux = top}
    \[\begin{array}{ccccccc}
        \y{5,5,1} & \to &\y{5,5,1}*[*(gray)]{0,0,1+2,2}&\to& \y{5,5,3,2}*[*(gray)]{0,0,3+2,2+2}& \to & \y{5,5,5,4}*[*(gray)]{5+2,5+1,5+1}\\
         \mu = \gamma_{(0)} & &\gamma_{(1)}& &\gamma_{(2)}& &\gamma_{(3)} = \lambda
    \end{array}\]
If we write $t_i$ for $\top\left(\gamma_{(i)}/\gamma_{(i-1)}\right)$, 
then $t_1 = 3$, $t_2 = 3$, and $t_3 = 1$. This polyribbon has sign
$\sgn_4(\lambda/\mu)=(-1)\cdot (-1)\cdot 1=1$.
\end{example}

\begin{remark}
The next examples illustrate some common pitfalls that may occur. 

(a) The shape $(1,1,1,1,1,1)$ is \emph{not} a $3^2$-polyribbon 
as the only way to construct it is as follows: \boks{0.3}
\[\begin{array}{ccccc}
        \varnothing &\to &\y{}*[*(gray)]{1,1,1}&\to& \y{1,1,1}*[*(gray)]{0,0,0,1,1,1}\\
         \gamma_{(0)} & &\gamma_{(1)}& &\gamma_{(2)}
    \end{array}\]
Here $\top(\gamma_{(1)}/\gamma_{(0)}) = 1$,
 which is smaller than $\top(\gamma_{(2)}/\gamma_{(1)}) = 4$.

(b) The list of component ribbons of an $r^n$-polyribbon is unique 
when nonnegative integers $r$ and $n$ are fixed. 
For instance, $(3,3)$ is a $2^3$-polyribbon
constructed via $\varnothing \to (1,1) \to (2,2) \to (3,3)$. 
On the other hand, $(3,3)$ is a $3^2$-polyribbon constructed 
via $\varnothing \to (2,1) \to (3,3)$; note that the alternate
construction $\varnothing\to (3)\to(3,3)$ is invalid.

(c) An $r^n$-polyribbon may not be connected, in the sense that 
the skew shape might be the union of two subsets of boxes with no shared edges.
For instance, $(6,1,1,1,1)/(3,1)$ is a disconnected $3^2$-polyribbon,
as one can see from this diagram: $\y{3,1}*[*(gray)]{3+3,0,1,1,1}$.

(d) We use the phrase ``adding an $r^n$-polyribbon to $\mu$ to 
give $\lambda$'' to mean $\lambda/\mu$ is an $r^n$-polyribbon.
If $\mu$ is given, we create a new $r^n$-polyribbon 
$\lambda/\mu$ by adding $n$ $r$-ribbons moving northeast along the border
of the growing shape.  If instead $\lambda/\mu$ is given at the outset,
we can test whether this shape is an $r^n$-polyribbon by trying to 
delete $n$ $r$-ribbons moving southwest along the border as the
shape $\lambda$ shrinks to $\mu$ through intermediate partition shapes.
For example, this test shows that $(2,2,2)$ is a $3^2$-polyribbon
but not a $2^3$-polyribbon.
\end{remark}

Here is the promised combinatorial description 
of the Schur expansion of $s_{\mu}\cdot h_n[p_r]$.

\begin{theorem}[\cite{wildon1}, Equation~(2)]\label{thm:r-decomp}
Let $\mu$ be a partition and $r,n$ be nonnegative integers. Then
\[s_\mu\cdot h_n[p_r] = s_\mu\cdot p_r[h_n] = \sum\limits_{\lambda} \sgn_r(\lambda/\mu) s_\lambda\]
where the sum is over all partitions $\lambda$ obtained 
by adding an $r^n$-polyribbon to $\mu$.
\end{theorem}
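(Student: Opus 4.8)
The plan is to deduce the Schur expansion from the classical Murnaghan--Nakayama rule (\cref{prop:slinky}) by an induction on $n$ driven by a Newton-type recursion. The first equality $h_n[p_r]=p_r[h_n]$ is immediate from the general identity $f[p_r]=p_r[f]$ recorded just before \cref{prop:H-expansions}, so it suffices to treat $h_n[p_r]$. Let $\phi_r\colon\Lambda\to\Lambda$ be the plethystic substitution $f\mapsto f[p_r]$; by the Monomial Substitution Rule this is the $\Q$-algebra homomorphism determined by $\phi_r(p_k)=p_{rk}$ for all $k$. Applying $\phi_r$ to the classical Newton identity $n\,h_n=\sum_{k=1}^{n}p_k\,h_{n-k}$ and using that $\phi_r$ is a ring homomorphism yields
\[ n\,h_n[p_r]=\sum_{k=1}^{n}p_{rk}\,h_{n-k}[p_r]. \]

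First I would multiply this relation by $s_\mu$ and induct on $n$. The base case $n=0$ holds because $h_0[p_r]=1$ and the only $r^0$-polyribbon added to $\mu$ is the empty skew shape $\mu/\mu$, whose sign is $1$. For the inductive step, write $n\,s_\mu\,h_n[p_r]=\sum_k\bigl(s_\mu\,h_{n-k}[p_r]\bigr)p_{rk}$; expand $s_\mu\,h_{n-k}[p_r]=\sum_\nu\sgn_r(\nu/\mu)\,s_\nu$ by the inductive hypothesis, and then expand each $s_\nu\,p_{rk}$ by \cref{prop:slinky} as a signed sum over $rk$-ribbons $\lambda/\nu$. Comparing the coefficient of $s_\lambda$ on both sides reduces the theorem to the purely combinatorial identity
\[ n\,\sgn_r(\lambda/\mu)=\sum_{k=1}^{n}\ \sum_{\nu}\sgn(\lambda/\nu)\,\sgn_r(\nu/\mu), \]
where the inner sum runs over $\nu$ with $\mu\subseteq\nu\subseteq\lambda$ such that $\lambda/\nu$ is an $rk$-ribbon. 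Here $\sgn_r(\nu/\mu)$ automatically vanishes unless $\nu/\mu$ is an $r^{n-k}$-polyribbon, and both sides are understood to vanish unless $\lambda/\mu$ is an $r^n$-polyribbon (the size constraint $|\nu/\mu|=r(n-k)$ being automatic).

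The hard part will be this last identity, a Newton-type recursion internal to the theory of $r$-ribbons that must encode both the ribbon signs and the top-row monotonicity defining $\sgn_r$. I would attack it with the abacus / $r$-quotient model, where adding an $r$-ribbon becomes sliding one bead down a single step on one of $r$ runners, with the ribbon sign recording the number of beads leapt over; this converts a single $rk$-ribbon into an ordinary $k$-ribbon supported on one runner and turns products of ribbon signs into products of bead-move signs. The residual difficulty is that the polyribbon top-row condition is a \emph{global} condition mixing the runners, so even after this translation one still needs a sign-reversing involution (or a direct marking argument on the $n$ constituent $r$-ribbons) to show that all non-monotone removal orders cancel while the monotone ones contribute the factor $n$, exactly mirroring the bijective proof of $n\,h_n=\sum_k p_k\,h_{n-k}$. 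The uniqueness of the monotone decomposition $\gamma_{(0)},\dots,\gamma_{(n)}$ asserted after \cref{eq:polyribbon-decomp} is precisely what makes $\sgn_r(\lambda/\mu)$ well defined and lets the induction close.

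As an alternative I would keep in reserve the more direct route of substituting $h_n=\sum_{\alpha\vdash n}z_\alpha^{-1}p_\alpha$, applying $\phi_r$ to get $h_n[p_r]=\sum_{\alpha\vdash n}z_\alpha^{-1}p_{r\alpha}$, and iterating \cref{prop:slinky} once per part; this expands $s_\mu\,h_n[p_r]$ as a signed count of all orderings of $r$-sized ribbon insertions, and a sign-reversing involution collapsing these onto the single monotone ordering recovers the polyribbon formula. I expect this to be less clean than the recursion above, since it forces one to manage the rational weights $z_\alpha^{-1}$ together with the cancellation, whereas the recursion isolates the entire combinatorial content in the single displayed identity.
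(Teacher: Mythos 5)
Your algebraic reduction is sound as far as it goes: $f\mapsto f[p_r]$ is indeed a $\Q$-algebra homomorphism sending $p_k\mapsto p_{rk}$, Newton's identity transports correctly to $n\,h_n[p_r]=\sum_{k=1}^{n}p_{rk}\,h_{n-k}[p_r]$, the base case is fine, and comparing coefficients of $s_\lambda$ does reduce the theorem to your displayed identity $n\,\sgn_r(\lambda/\mu)=\sum_{k=1}^{n}\sum_{\nu}\sgn(\lambda/\nu)\,\sgn_r(\nu/\mu)$. But that identity carries the \emph{entire} content of the plethystic Murnaghan--Nakayama rule, and you do not prove it: you name two strategies (an abacus/$r$-quotient translation plus an unspecified sign-reversing involution, or a direct marking argument) and describe what they would have to accomplish. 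Nothing in your text establishes either (i) that the right-hand side vanishes when $\lambda/\mu$ is not an $r^n$-polyribbon --- this requires genuine cancellation among terms in which $\nu/\mu$ is a legitimate $r^{n-k}$-polyribbon but the single $rk$-ribbon $\lambda/\nu$ is incompatible with any monotone decomposition of $\lambda/\mu$ --- or (ii) that when $\lambda/\mu$ is an $r^n$-polyribbon the surviving terms contribute exactly the factor $n$ with the correct sign. The difficulty is not cosmetic: an $rk$-ribbon $\lambda/\nu$ need not be a union of $k$ of the $r$-ribbons in the canonical decomposition of $\lambda/\mu$, so the involution must handle configurations where the two decompositions cross each other, and its sign behavior must be checked. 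As written, the proposal is a plan for a proof, not a proof.

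For context, the paper itself does not prove this statement; it is imported from Wildon~\cite{wildon1} (Equation~(2)), with the $r^n$-polyribbon formulation following Turek~\cite{turek}, and both of those papers devote their main arguments precisely to the cancellation step you have left open (Wildon by a combinatorial involution argument, Turek via Loehr's labeled abacus). So your Newton-recursion framing is a genuinely different and plausible organization of the argument --- it cleanly isolates the combinatorial core --- but until that core identity is actually established, there is a gap exactly where the theorem is hardest.
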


\begin{remark}
In the case $n=1$, $h_1[p_r]=p_r$, and the rule in the theorem
reduces to the Slinky Rule stated in \cref{prop:slinky}.
In the case $r=1$, $h_n[p_1]=h_n$, and the theorem reduces to the
classical Pieri rule. This says that $s_{\mu}h_n=\sum_{\nu} s_{\nu}$
where we sum over partitions $\nu$ such that $\nu/\mu$ is 
a horizontal $n$-strip, namely a collection of $n$ boxes in distinct columns.
\end{remark}

Applying \cref{thm:r-decomp} to the polysymmetric case leads
to the following theorem.
\begin{theorem}\label{thm:s-times-H1}
Let $\sigma$ be any type and $d^r$ be a block. Then 
\[ s_\sigma^\otimes H_{d^r} = 
\sum\limits_{\tau} \sgn_r^\otimes(\tau/\sigma) s^\otimes_\tau,\]
where we sum over all types $\tau$ obtained from $\sigma$ as follows:
for some partition $\lambda\vdash d$, $\tau|_k$ is obtained by 
adding an $r^{m_k(\lambda)}$-polyribbon to $\sigma|_k$ for all $k\geq 1$;
and $\sgn_r^\otimes(\tau/\sigma) 
= \prod\limits_{k\geq 1} \sgn_r((\tau|_k)/(\sigma|_k))$.
\end{theorem}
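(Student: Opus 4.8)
The plan is to combine the expansion of $H_{d^r}$ from \cref{prop:H-expansions}(b) with Wildon's factorwise Schur expansion from \cref{thm:r-decomp}, exploiting that multiplication in $\PLambda$ is carried out independently in each tensor factor.

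First I would substitute
\[ H_{d^r}=\sum_{\lambda\vdash d} h_{m_1(\lambda)}[p_r]\otimes h_{m_2(\lambda)}[p_r]\otimes\cdots \]
and multiply on the left by $s_\sigma^\otimes = s_{\sigma|_1}\otimes s_{\sigma|_2}\otimes\cdots$. Because the product of two pure tensors is the tensor of the factorwise products (equivalently, in the concrete model we multiply symmetric functions living in disjoint variable sets $\xx_{k*}$), for each fixed $\lambda\vdash d$ this gives
\[ s_\sigma^\otimes\bigl(h_{m_1(\lambda)}[p_r]\otimes h_{m_2(\lambda)}[p_r]\otimes\cdots\bigr)
   =\bigotimes_{k\geq 1}\bigl(s_{\sigma|_k}\cdot h_{m_k(\lambda)}[p_r]\bigr). \]

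Next I would apply \cref{thm:r-decomp} to each tensor factor, rewriting $s_{\sigma|_k}\cdot h_{m_k(\lambda)}[p_r]$ as the signed sum of $s_{\tau|_k}$ over all partitions $\tau|_k$ obtained by adding an $r^{m_k(\lambda)}$-polyribbon to $\sigma|_k$, with coefficient $\sgn_r((\tau|_k)/(\sigma|_k))$. Distributing the tensor product over these sums converts the product of factorwise sums into a single sum over types $\tau$, and the coefficient of $s_\tau^\otimes$ becomes $\prod_{k\geq 1}\sgn_r((\tau|_k)/(\sigma|_k))=\sgn_r^\otimes(\tau/\sigma)$. This is a finite product since $\sigma|_k$ is empty and $m_k(\lambda)=0$ for all but finitely many $k$ (in which case the factor $s_{\sigma|_k}\cdot h_0[p_r]=s_{\sigma|_k}$ forces $\tau|_k=\sigma|_k$ and contributes sign $1$). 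Summing over all $\lambda\vdash d$ then yields the claimed expansion.

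The one point requiring care — and the main obstacle — is reconciling the explicit outer sum over $\lambda\vdash d$ with the ``for some $\lambda\vdash d$'' phrasing of the theorem, i.e.\ verifying that no type $\tau$ is counted more than once. This follows because each $r$-ribbon contributes exactly $r$ cells, so an $r^{m_k(\lambda)}$-polyribbon $\tau|_k/\sigma|_k$ forces $m_k(\lambda)=(|\tau|_k|-|\sigma|_k|)/r$; hence the multiplicities of $\lambda$, and therefore $\lambda$ itself, are uniquely recovered from $\tau$ and $\sigma$, and any $\tau$ whose skew shape $\tau|_k/\sigma|_k$ fails to be a polyribbon contributes a vanishing coefficient $\sgn_r=0$. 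Finally I would confirm the degree bookkeeping: the weight added in tensor position $k$ is $k\cdot r\cdot m_k(\lambda)$, so the total added weight is $r\sum_{k\geq1} k\,m_k(\lambda)=r|\lambda|$, which matches $dr=|H_{d^r}|$ precisely when $|\lambda|=d$. This shows that only partitions $\lambda$ of $d$ occur and that every resulting $\tau$ satisfies $|\tau|=|\sigma|+dr$, consistent with the grading of $\PLambda$.
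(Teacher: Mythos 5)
Your proposal follows essentially the same route as the paper's proof: expand $H_{d^r}$ via \cref{prop:H-expansions}(b), apply \cref{thm:r-decomp} factorwise to each $s_{\sigma|_k}\cdot h_{m_k(\lambda)}[p_r]$, and distribute the tensor product over the resulting sums. Your additional verification that $\lambda$ is uniquely recoverable from $\tau$ and $\sigma$ (via $m_k(\lambda)=(|\tau|_k|-|\sigma|_k|)/r$), so that no type is counted twice when the outer sum over $\lambda$ is collapsed, is correct and addresses a point the paper leaves implicit.
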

When $\tau$ is related to $\sigma$ as described in this theorem,
we say that \emph{$\tau/\sigma$ is a $d^r$-tensor polyribbon}.
\begin{proof}
By~\cref{prop:H-expansions}(b), 
\[ s_\sigma^{\otimes}H_{d^r} 
= \sum\limits_{\lambda\vdash d} s_{\sigma|_1}\cdot h_{m_1(\lambda)}[p_r] 
\otimes s_{\sigma|_2}\cdot h_{m_2(\lambda)}[p_r] \otimes \cdots
\otimes s_{\sigma|_k}\cdot h_{m_k(\lambda)}[p_r] \otimes \cdots.  \]
The $k$th factor in the tensor product expands into
$\sum\limits_{\nu_{(k)}} \sgn_r(\nu_{(k)}/(\sigma|_k)) s_{\nu_{(k)}}$ 
where the sum is over all partitions $\nu_{(k)}$ obtained by adding 
an $r^{m_k(\lambda)}$-polyribbon to $\sigma|_k$. 
Using the distributive property of tensor products over addition 
gives the signed sum of $s_{\tau}^{\otimes}$ for the types
$\tau$ described in the theorem.
\end{proof}

\begin{example}
Let $\sigma = (3^23^22^32^11^21^1)=(1^{2,1}2^{3,1}3^{2,2})$, 
which has the tensor diagram shown here:
\[ \y{2,1}\otimes \y{3,1}\otimes \y{2,2} \] 
We describe one object in the expansion $s^\otimes_\sigma H_{14^3}$. 
First, we pick the partition $\lambda = (3,3,2,2,2,1,1)=(1^22^33^2)$ of 14. 
The theorem tells us to add a $3^2$-polyribbon to the first diagram,
a $3^3$-polyribbon to the second diagram, and 
a $3^2$-polyribbon to the third diagram in all possible ways.
One possible object is
\[
    \y{2,1}*[*(gray)]{0,1+1,2}*[*(lightgray)]{2+2,2+1}\otimes \y{3,1}*[*(gray)]{0,0,1,1,1}*[*(lightgray)]{0,1+1,1+1,1+1}*[*(gray)]{3+1,2+2}\otimes \y{2,2}*[*(gray)]{2+2,2+1}*[*(lightgray)]{4+3}
    \]
Here the gray cells show the added polyribbons, and the shading shows
the constituent ribbons within each polyribbon.  The sign of this object is 
$(-1\cdot -1)\cdot (1 \cdot 1\cdot -1)\cdot (-1\cdot 1) = 1$,
and the corresponding term is $+s^\otimes_{(3^73^32^42^42^22^22^11^41^31^2)}$.
\end{example}

\subsection{Rule for $s_{\sigma}^{\otimes}H_{\delta}$ 
and $\mcM(H,s^{\otimes})$.}
\label{subsec:s-times-H2}

We can iterate~\cref{thm:s-times-H1}
to obtain the $s^{\otimes}$-expansions 
of $s^{\otimes}_{\sigma}H_{\delta}$ and $H_{\sigma}$. 
Let $\tau$ and $\sigma$ be types. 
Let $\delta=(d_1^{r_1},\ldots,d_s^{r_s})$ be an ordered sequence of blocks.
A \emph{tensor polyribbon tableau (TPRT)} $T$ of \emph{shape} $\tau/\sigma$
and \emph{content} $\delta$ is a sequence of types
$\sigma=\tau_{(0)},\tau_{(1)},\ldots,\tau_{(s)}=\tau$ such that,
for all $i$ between $1$ and $s$, $\tau_{(i)}/\tau_{(i-1)}$
is a $d_i^{r_i}$-tensor polyribbon.  Let $\TPRT(\tau/\sigma,\delta)$
be the set of such objects.  We visualize $T$ by drawing
the tensor diagram of $\tau$ and filling all cells in
$\dg(\tau_{(i)})\setminus\dg(\tau_{(i-1)})$ with the value $i$.
The \emph{sign} of $T$ is $\sgn(T)
=\prod_{i=1}^s \sgn_{r_i}^{\otimes}(\tau_{(i)}/\tau_{(i-1)})$.

\begin{theorem}\label{thm:s-times-H2}
Given a type $\sigma$ and a sequence of blocks
 $\delta = (d_1^{r_1}, \ldots, d_{s}^{r_s})$,
\[ s_\sigma^\otimes H_{\delta} = \sum_{\tau} 
 \left[\sum_{T\in\TPRT(\tau/\sigma,\delta)} \sgn(T)\right] s_{\tau}^{\otimes}.  \]
\end{theorem}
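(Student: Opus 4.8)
The plan is to prove the theorem by induction on $s$, the length of $\delta$, using \cref{thm:s-times-H1} as the single-factor building block. The underlying algebraic fact is that $H_\delta=H_{d_1^{r_1}}H_{d_2^{r_2}}\cdots H_{d_s^{r_s}}$ by definition, so multiplying $s_\sigma^\otimes$ by $H_\delta$ amounts to multiplying by the factors $H_{d_i^{r_i}}$ one at a time. For the base case $s=0$, the sequence $\delta$ is empty, $H_\delta=1$, and $s_\sigma^\otimes H_\delta=s_\sigma^\otimes$; this matches the right side, since the only element of $\TPRT(\tau/\sigma,\delta)$ is the empty tableau with $\tau=\sigma$, which has sign $1$.

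For the inductive step I would write $\delta'=(d_1^{r_1},\ldots,d_{s-1}^{r_{s-1}})$ so that $H_\delta=H_{\delta'}H_{d_s^{r_s}}$. By the inductive hypothesis,
\[ s_\sigma^\otimes H_{\delta'}=\sum_{\tau'}\left[\sum_{T'\in\TPRT(\tau'/\sigma,\delta')}\sgn(T')\right]s_{\tau'}^\otimes. \]
Multiplying both sides by $H_{d_s^{r_s}}$ and applying \cref{thm:s-times-H1} to each product $s_{\tau'}^\otimes H_{d_s^{r_s}}=\sum_{\tau}\sgn_{r_s}^\otimes(\tau/\tau')s_\tau^\otimes$ (the sum over types $\tau$ for which $\tau/\tau'$ is a $d_s^{r_s}$-tensor polyribbon), and then collecting the coefficient of $s_\tau^\otimes$, one obtains
\[ \sum_{\tau'}\ \sum_{T'\in\TPRT(\tau'/\sigma,\delta')}\sgn(T')\cdot\sgn_{r_s}^\otimes(\tau/\tau'), \]
where the outer sum runs over intermediate types $\tau'$ such that $\tau/\tau'$ is a $d_s^{r_s}$-tensor polyribbon.

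The key combinatorial step is to recognize this expression as a single sum over $\TPRT(\tau/\sigma,\delta)$. Every TPRT of shape $\tau/\sigma$ and content $\delta$ is, by definition, a chain $\sigma=\tau_{(0)}\subseteq\cdots\subseteq\tau_{(s)}=\tau$ in which $\tau_{(i)}/\tau_{(i-1)}$ is a $d_i^{r_i}$-tensor polyribbon. Deleting its last step produces a TPRT $T'$ of shape $\tau_{(s-1)}/\sigma$ and content $\delta'$ together with a $d_s^{r_s}$-tensor polyribbon $\tau/\tau_{(s-1)}$, and conversely this data reassembles into a unique TPRT of content $\delta$; this is a bijection. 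Since the sign of a TPRT is multiplicative over the steps of its chain, $\sgn(T)=\sgn(T')\cdot\sgn_{r_s}^\otimes(\tau/\tau_{(s-1)})$. Setting $\tau'=\tau_{(s-1)}$ identifies the displayed coefficient with $\sum_{T\in\TPRT(\tau/\sigma,\delta)}\sgn(T)$, completing the induction.

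I expect no serious obstacle here, as the argument is a routine iteration; the one point that deserves care is verifying that splitting a TPRT into its initial chain and its final tensor polyribbon is genuinely a bijection, with no hidden overcounting of the intermediate type $\tau_{(s-1)}$. This holds because the chain recorded by a TPRT is itself the data of the object, and because \cref{thm:s-times-H1} already packages the choice of partition $\lambda\vdash d_s$ into the shape of the tensor polyribbon: the number of cells added in tensor position $k$ equals $r_s\,m_k(\lambda)$, so the shape $\tau/\tau_{(s-1)}$ determines $\lambda$ and hence the sign $\sgn_{r_s}^\otimes(\tau/\tau_{(s-1)})$ unambiguously. Consequently the final-step factorization matches the definition of $\TPRT(\tau/\sigma,\delta)$ exactly, and the coefficient identity follows.
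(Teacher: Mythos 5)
Your proof is correct and follows essentially the same route as the paper, which simply states that the result follows by iterating \cref{thm:s-times-H1} (just as \cref{thm:s-times-P2} is deduced from \cref{thm:s-times-P1}); your induction on $s$, with the chain-truncation bijection and the multiplicativity of $\sgn$, is exactly the formalization of that iteration, and your observation that the skew shape determines the partition $\lambda\vdash d_s$ (since the cells added in position $k$ number $r_s\,m_k(\lambda)$) correctly disposes of the only potential overcounting issue.
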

\begin{proof}
This follows by iterating \cref{thm:s-times-H1}
in the same way that \cref{thm:s-times-P2} is deduced
from \cref{thm:s-times-P1}.
\end{proof}

\begin{corollary}\label{cor:M(H,s-tensor)}
For all $\sigma,\tau\vdt n$, the coefficient of $s_{\tau}^{\otimes}$
in the $s^{\otimes}$-expansion of $H_{\sigma}$ is
\[ \mcM(H,s^{\otimes})_{\tau,\sigma}=
  \sum_{T\in\TPRT(\tau,\sigma)} \sgn(T). \]
\end{corollary}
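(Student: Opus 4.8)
The plan is to derive this corollary as the special case of \cref{thm:s-times-H2} in which the bottom type is empty, exactly paralleling the way \cref{cor:M(P,s-tensor)} is obtained from \cref{thm:s-times-P2}. The key observation is that the pure tensor basis element indexed by the empty type $\vn$ (the unique type of weight $0$) is $s_{\vn}^{\otimes}=1$, since every tensor factor is $s_{\vn}=1$. Hence $H_{\sigma}=1\cdot H_{\sigma}=s_{\vn}^{\otimes}H_{\sigma}$, which puts us in a position to apply the Pieri-type expansion already established.

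First I would apply \cref{thm:s-times-H2} with the input type taken to be $\vn$ and the content sequence $\delta$ taken to be $\sigma$ itself, viewing the type $\sigma=(d_1^{m_1},\ldots,d_t^{m_t})$ as its own weakly decreasing sequence of blocks. This is consistent with the definition $H_{\sigma}=\prod_i H_{d_i^{m_i}}$ given in \cref{subsec:new-bases}. The theorem then yields
\[ H_{\sigma}=s_{\vn}^{\otimes}H_{\sigma}=\sum_{\tau}\left[\sum_{T\in\TPRT(\tau/\vn,\sigma)}\sgn(T)\right]s_{\tau}^{\otimes}. \]
By the defining relation~\eqref{eq:trans-mat2} for transition matrices, the bracketed coefficient of $s_{\tau}^{\otimes}$ is precisely $\mcM(H,s^{\otimes})_{\tau,\sigma}$. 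Identifying $\TPRT(\tau/\vn,\sigma)$ with $\TPRT(\tau,\sigma)$ --- the convention that a TPRT ``of shape $\tau$'' is one whose bottom type is empty, paralleling the $\TRHT(\tau,\delta)$ notation introduced earlier --- then completes the derivation.

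I expect essentially no genuine obstacle here, since the argument is a direct specialization. The one point deserving a word of care is that the TPRT count depends \emph{a priori} on the order in which the blocks of $\sigma$ are processed, whereas $H_{\sigma}$ is a fixed element of the commutative ring $\PLambda$. However, because we sum $\sgn(T)$ over \emph{all} TPRT of a given shape $\tau$, the total is forced to be order-independent: it equals the well-defined coefficient $\mcM(H,s^{\otimes})_{\tau,\sigma}$. Thus the formula is unambiguous regardless of the chosen block order, and the statement simply fixes the weakly decreasing order coming from $\sigma$ being a type.
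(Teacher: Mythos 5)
Your proposal is correct and matches the paper's own derivation: \cref{cor:M(H,s-tensor)} is obtained exactly by specializing \cref{thm:s-times-H2} to the empty initial type (so $s_{\vn}^{\otimes}=1$) with content $\delta=\sigma$, just as \cref{cor:M(P,s-tensor)} follows from \cref{thm:s-times-P2}. Your added remark on order-independence of the signed TPRT count is a harmless extra observation not needed in the paper's argument.
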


\begin{example} 
We find the coefficient of $s^\otimes_{2^21^51^3}$ in the 
$s^\otimes$-expansion of $H_{3^23^2}$. 
Here, $d_1 = d_2 = 3$ and $r_1 = r_2 = 2$.
We first pick $\lambda\vdash 3$ and add a $2^{m_k(\lambda)}$-polyribbon
to an empty diagram in each position $k$. 
Then we pick $\mu\vdash 3$ and add a $2^{m_k(\mu)}$-polyribbon
to the current diagram in each position $k$. We make such choices in
all possible ways that lead to the target tensor diagram with $\dg(5,3)$ 
in position $1$ and $\dg(2)$ in position 2. Since position 3 is empty,
we cannot choose $\lambda$ or $\mu$ to be $(3^1)$.

Choosing $\lambda=(1^3)$ and $\mu=(1^12^1)$ leads to these two TPRTs,
both with sign $-1$:
\boks{0.4}
\ytableausetup{aligntableaux = top}
\[ \yt{{*(lightgray)1}{*(gray)1}{*(lightgray)1}22,
       {*(lightgray)1}{*(gray)1}{*(lightgray)1}}\otimes\yt{22}
    \otimes\varnothing\quad \quad 
   \yt{{*(lightgray)1}{*(gray)1}{*(gray)1}{*(lightgray)1}{*(lightgray)1},
       {*(lightgray)1}22}\otimes \yt{22} \otimes\varnothing \]
Choosing $\lambda=(1^12^1)$ and $\mu=(1^3)$ leads to these two TPRTs,
both with sign $-1$:
\boks{0.4}
\ytableausetup{aligntableaux = top}
\[ \yt{11{*(gray)2}{*(lightgray)2}{*(lightgray)2},
       {*(lightgray)2}{*(lightgray)2}{*(gray)2}} 
   \otimes \yt{11} \otimes\varnothing \quad \quad
  \yt{1{*(lightgray)2}{*(gray)2}{*(lightgray)2}{*(lightgray)2},
      1{*(lightgray)2}{*(gray)2}} \otimes \yt{11}\otimes\varnothing
\]
No other choice of $\lambda,\mu$ leads to the required tensor diagram.
Thus the coefficient of $s^\otimes_{2^21^51^3}$ in $H_{3^23^2}$ is $-4$.
\end{example}

\begin{remark}
Let $\sigma = (1^{1,1,\ldots,1})\vdt n$. 
The coefficient of $s_\tau^\otimes$ in the $s^{\otimes}$-expansion 
of $H_\sigma$ is
\[
      \mcM(H, s^\otimes)_{\tau, \sigma} = \begin{cases}
          f^\lambda & \text{if } \tau =(1^{\lambda}), \\
          0 & \text{otherwise,}
      \end{cases}
\]
where $f^\lambda$ is the number of standard Young tableaux of shape $\lambda$. 
This extends the analogous result for the symmetric function transition
matrix $\mcM(h,s)_{\lambda,1^n}$.
\end{remark}

\subsection{Rules for $s^{\otimes}_{\sigma}E^+_{d^m}$
                  and $s^{\otimes}_{\sigma}E_{d^m}$.}
\label{subsec:s-times-E1}

The rules for $E^+$ and $E$ follow from the rule for $H$. In this section, 
we make use of the involution $\omega$ on the algebra of symmetric 
functions. Under this map, $\omega(h_\lambda) = e_\lambda$,
$\omega(s_{\mu})=s_{\mu'}$, and 
$\omega(p_\lambda) = (-1)^{|\lambda|-\ell(\lambda)}p_\lambda$.
For more information about this involution, 
refer to Section~9.20 of \cite{loehr-comb}. 
In this section, we use the following result.
\begin{prop}[\cite{macd}, I.8, Ex 1(c)]\label{prop:omega-plethysm}
    Given nonnegative integers $r$ and $n$, 
    \[
    \omega(h_n[p_r]) = (-1)^{n(r-1)}e_n[p_r].
    \]
\end{prop}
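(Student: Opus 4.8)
The plan is to prove $\omega(h_n[p_r]) = (-1)^{n(r-1)}e_n[p_r]$ by exploiting the compatibility of $\omega$ with plethysm when the inner function is a power-sum, together with the known action of $\omega$ on $h_n$ and on $p_r$. The essential tool is the classical identity describing how $\omega$ interacts with plethysm by power-sums: for any $f\in\Lambda$ and any positive integer $r$, one has $\omega(f[p_r]) = (\omega_r f)[p_r]$, where $\omega_r$ denotes the algebra involution sending each $p_k$ to $(-1)^{(r-1)k}p_k$ (equivalently, $\omega_r$ is $\omega$ when $r$ is even and the identity when $r$ is odd, up to the sign bookkeeping). This follows from the fact that $p_k[p_r] = p_{kr}$ and $\omega(p_{kr}) = (-1)^{kr - 1}p_{kr}$, so tracking signs through the power-sum expansion of $f$ yields the stated rule.

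First I would reduce everything to the power-sum basis, since plethysm and $\omega$ both have clean descriptions there. Writing $h_n = \sum_{\lambda\vdash n} z_\lambda^{-1} p_\lambda$ (the standard expansion, where $z_\lambda$ is the usual integer), I would compute $h_n[p_r] = \sum_{\lambda\vdash n} z_\lambda^{-1} p_\lambda[p_r]$. Using $p_\lambda[p_r] = \prod_i p_{\lambda_i}[p_r] = \prod_i p_{r\lambda_i} = p_{r\lambda}$, where $r\lambda = (r\lambda_1, r\lambda_2, \ldots)$, this becomes $h_n[p_r] = \sum_{\lambda\vdash n} z_\lambda^{-1} p_{r\lambda}$. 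Now I would apply $\omega$ termwise, using $\omega(p_{r\lambda}) = \prod_i (-1)^{r\lambda_i - 1} p_{r\lambda_i} = (-1)^{\sum_i(r\lambda_i - 1)} p_{r\lambda} = (-1)^{rn - \ell(\lambda)} p_{r\lambda}$, since $\sum_i \lambda_i = n$ and $\ell(\lambda)$ counts the parts.

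The next step is to compare this against $e_n[p_r]$ computed the same way. Since $e_n = \sum_{\lambda\vdash n} \varepsilon_\lambda z_\lambda^{-1} p_\lambda$ with $\varepsilon_\lambda = (-1)^{n - \ell(\lambda)}$ (this is exactly $\omega(h_n)$ expanded in power-sums), I get $e_n[p_r] = \sum_{\lambda\vdash n} (-1)^{n-\ell(\lambda)} z_\lambda^{-1} p_{r\lambda}$. Thus I would show that the coefficient of $z_\lambda^{-1}p_{r\lambda}$ in $\omega(h_n[p_r])$, namely $(-1)^{rn - \ell(\lambda)}$, equals $(-1)^{n(r-1)}$ times the corresponding coefficient $(-1)^{n - \ell(\lambda)}$ in $e_n[p_r]$. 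This amounts to the sign identity $(-1)^{rn - \ell(\lambda)} = (-1)^{n(r-1)}(-1)^{n - \ell(\lambda)}$, i.e. $(-1)^{rn} = (-1)^{n(r-1)+n} = (-1)^{nr}$, which holds trivially. Matching coefficients across all $\lambda\vdash n$ then gives the result.

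The step I expect to require the most care is the justification that $\omega$ may be applied termwise and that the power-sum expansions of $h_n$ and $e_n$ carry the sign $\varepsilon_\lambda$ as claimed; these are standard but must be invoked correctly (from the reference to Section~9.20 of~\cite{loehr-comb}). The only genuine subtlety is bookkeeping the sign $(-1)^{r\lambda_i - 1}$ summed over parts and confirming it collapses to a function of $rn$ and $\ell(\lambda)$ alone, independent of the finer structure of $\lambda$; once that collapse is observed, the comparison with the $e_n[p_r]$ expansion is immediate and the global factor $(-1)^{n(r-1)}$ falls out uniformly.
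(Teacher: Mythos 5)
Your proof is correct. There is nothing in the paper to compare it against: the paper states this proposition with a citation to Macdonald (I.8, Ex.~1(c)) and gives no proof, so your argument supplies exactly what the citation leaves implicit, and it does so by the standard route. The computation is sound at every step: $f\mapsto f[p_r]$ is linear (indeed $f[p_r]=f(\xx^r)$ by the Monomial Substitution Rule quoted in the paper), $p_\lambda[p_r]=p_{r\lambda}$, so $h_n[p_r]=\sum_{\lambda\vdash n}z_\lambda^{-1}p_{r\lambda}$; applying the algebra involution $\omega$ with $\omega(p_m)=(-1)^{m-1}p_m$ gives the coefficientwise sign $(-1)^{\sum_i(r\lambda_i-1)}=(-1)^{rn-\ell(\lambda)}$; and comparing with $e_n[p_r]=\sum_{\lambda\vdash n}(-1)^{n-\ell(\lambda)}z_\lambda^{-1}p_{r\lambda}$ reduces the identity to $(-1)^{rn-\ell(\lambda)}=(-1)^{n(r-1)}(-1)^{n-\ell(\lambda)}$, which holds for every $\lambda$. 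One small caution: the parenthetical in your opening paragraph asserting that $\omega_r$ ``is $\omega$ when $r$ is even'' is not accurate --- for even $r$ the map $\omega_r$ sends $p_k\mapsto(-1)^kp_k$, whereas $\omega$ sends $p_k\mapsto(-1)^{k-1}p_k$, so the two differ by a sign on every power-sum generator --- but that remark is purely motivational and your actual argument in the later paragraphs never relies on it.
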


Using the proof technique from~\cref{prop:H-expansions} and the
idea of square-free monomials,
we can find the $e^{\otimes}$-expansions of $E_d^+$ and $E_d$.

\begin{prop}\label{prop:E-expansions}
For nonnegative integers $d$ and $r$, 
the following expansions of $E^+$ and $E$ hold.
\begin{enumerate}[(a)]
\item $E^+_d = \sum\limits_{\lambda\vdash d} 
e_{m_1(\lambda)}(\xx_{1*})e_{m_2(\lambda)}(\xx_{2*})\cdots 
e_{m_k(\lambda)}(\xx_{k*})\cdots
= \sum\limits_{\lambda\vdash d} e_{m_1(\lambda)} \otimes e_{m_2(\lambda)} 
\otimes \cdots \otimes e_{m_k(\lambda)}\otimes\cdots$.

\item $E_d = \sum\limits_{\lambda\vdash d} 
(-1)^{\ell(\lambda)}e_{m_1(\lambda)}(\xx_{1*})\cdots
  e_{m_k(\lambda)}(\xx_{k*})\cdots
= \sum\limits_{\lambda\vdash d} (-1)^{\ell(\lambda)}
e_{m_1(\lambda)}\otimes \cdots\otimes e_{m_k(\lambda)}\otimes\cdots$.

\item $E^+_{d^r} = \sum\limits_{\lambda\vdash d} 
  e_{m_1(\lambda)}(\xx_{1*}^r)\cdots e_{m_k(\lambda)}(\xx_{k*}^r)\cdots
= \sum\limits_{\lambda\vdash d} 
e_{m_1(\lambda)}[p_r]\otimes \cdots\otimes e_{m_k(\lambda)}[p_r]\otimes\cdots$.

\item $E_{d^r} = \sum\limits_{\lambda\vdash d} 
(-1)^{\ell(\lambda)}e_{m_1(\lambda)}(\xx_{1*}^r)\cdots
  e_{m_k(\lambda)}(\xx_{k*}^r)\cdots
= \sum\limits_{\lambda\vdash d} (-1)^{\ell(\lambda)}
e_{m_1(\lambda)}[p_r]\otimes \cdots\otimes e_{m_k(\lambda)}[p_r]\otimes\cdots$.

\end{enumerate}
\end{prop}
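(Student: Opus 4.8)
The plan is to mirror the proof of \cref{prop:H-expansions}, which establishes the analogous expansions for $H_d$ and $H_{d^r}$, and to graft the sign bookkeeping onto it. The key structural fact I would exploit is that $E_d^+$ is the sum of all \emph{square-free} monomials of degree $d$, where square-free means no variable $x_{ij}$ is repeated within a monomial. The combinatorial heart of part~(a) is the same bijection used for $H_d$: any square-free monomial of degree $d$ factors uniquely as a product of square-free monomials, one in each variable block $\xx_{k*}$, and if $m_k$ of the variables come from $\xx_{k*}$ then each such variable contributes degree $k$, so $\sum_k k m_k = d$, meaning the exponent data is recorded by a partition $\lambda\vdash d$ with $m_k = m_k(\lambda)$. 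Since a square-free monomial in the block $\xx_{k*}$ using exactly $m_k(\lambda)$ distinct variables is precisely a monomial appearing in $e_{m_k(\lambda)}(\xx_{k*})$, part~(a) follows just as in \cref{prop:H-expansions}(a), with $e$ replacing $h$.

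For part~(b), I would observe that $E_d$ is obtained from $E_d^+$ by inserting the sign $(-1)^s$, where $s$ is the number of variables in the monomial, i.e.\ its total number of factors. In the summand indexed by $\lambda$, every monomial uses exactly $\sum_k m_k(\lambda) = \ell(\lambda)$ variables, so the sign $(-1)^s$ is the constant $(-1)^{\ell(\lambda)}$ across that entire summand. Factoring this constant out of the $e_{m_k(\lambda)}$ product yields the stated formula. The only point requiring a word of care is that the sign in $E_d$ depends on the total number of factors in a monomial, which is an invariant of the block-exponent partition $\lambda$ and not of the individual monomial, so pulling the sign out of the sum is legitimate.

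Parts~(c) and~(d) then follow exactly as \cref{prop:H-expansions}(b) follows from \cref{prop:H-expansions}(a): apply the definitions $E^+_{d^r} = E^+_d(\xx_{**}^r)$ and $E_{d^r} = E_d(\xx_{**}^r)$, which replace each $x_{ij}$ by $x_{ij}^r$, and then invoke the Monomial Substitution Rule $f[p_r] = f(\xx^r)$. Substituting $\xx_{k*}^r$ for $\xx_{k*}$ in the factor $e_{m_k(\lambda)}(\xx_{k*})$ turns it into $e_{m_k(\lambda)}(\xx_{k*}^r) = e_{m_k(\lambda)}[p_r]$ inside the $k$th tensor slot. The constant sign $(-1)^{\ell(\lambda)}$ from part~(b) is unaffected by this substitution, so it carries over verbatim into part~(d).

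I do not anticipate a genuine obstacle here, since the argument is a direct transcription of the proof of \cref{prop:H-expansions} with $e$ in place of $h$; the one place deserving attention is verifying that the sign $(-1)^{\ell(\lambda)}$ is truly constant on each $\lambda$-summand. (\cref{prop:omega-plethysm}, though stated in this section, is not needed for this proposition and is reserved for deriving the subsequent $s^\otimes_\sigma E^+_{d^m}$ and $s^\otimes_\sigma E_{d^m}$ rules via the involution $\omega$.)
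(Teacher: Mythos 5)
Your proposal is correct and follows essentially the same route as the paper's own proof: adapt the argument of \cref{prop:H-expansions} to square-free monomials for part~(a), observe that the sign $(-1)^{\ell(\lambda)}$ is constant on each $\lambda$-summand (since each $e_{m_k(\lambda)}$ contributes exactly $m_k(\lambda)$ indeterminates) for part~(b), and deduce~(c) and~(d) from the Monomial Substitution Rule. Your side remark that \cref{prop:omega-plethysm} is not needed here also matches the paper, which reserves it for the subsequent Schur-expansion rules.
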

\begin{proof}
We prove (a) and (b), and the rest follows from the Monomial Substitution 
Rule. We proceed as in the proof of~\cref{prop:H-expansions}, but in this case 
each variable appears at most once. This gives us the expression for $E_d^+$.
For $E_d$, the sign of a monomial $f$ is given by $(-1)^{\text{len}(f)}$, 
where $\text{len}(f)$ is the number of indeterminates in $f$. 
Each $e_{k}$ has exactly $k$ indeterminates and thus has the sign $(-1)^{k}$. 
This shows that the sign for the monomial 
$e_{m_1(\lambda)}(\xx_{1*})\cdots e_{m_k(\lambda)}(\xx_{k*})\cdots$ is
$(-1)^{m_1(\lambda)+\cdots+m_k(\lambda)+\cdots}= (-1)^{\ell(\lambda)}$.
\end{proof}

Before we present the analogue of~\cref{thm:r-decomp}
for multiplying a Schur function by $e_n[p_r]$,
we introduce a notion dual to that of an $r^n$-polyribbon. 
For any skew shape $\lambda/\mu$,
let $\bot(\lambda/\mu)$ denote the least index of a column 
that contains a cell of $\lambda/\mu$. 
A skew shape $\lambda/\mu$ is called an \textit{$(r^n)'$-polyribbon} or 
a \textit{dual $r^n$-polyribbon} if there 
exists a (necessarily unique) list of partitions 
$\gamma_{(0)},\gamma_{(1)},\ldots,\gamma_{(n)}$ such that
$\mu = \gamma_{(0)} \subseteq \gamma_{(1)} \subseteq \cdots 
\subseteq \gamma_{(n)} = \lambda$,
$\gamma_{(i)}/\gamma_{(i-1)}$ is an $r$-ribbon for $1\leq i \leq n$, 
and $\bot(\gamma_{(i)}/\gamma_{(i-1)}) \geq 
\bot(\gamma_{(i+1)}/\gamma_{(i)})$ for $1\leq i \leq n-1$. Define the 
\emph{sign} of an $(r^n)'$-polyribbon to be $\sgn'_r(\lambda/\mu) 
= \prod\limits_{i=1}^n \sgn(\gamma_{(i)}/\gamma_{(i-1)})$.

\begin{remark}
Equivalently, $\lambda/\mu$ is a dual $r^n$-polyribbon 
if we can go from $\dg(\mu)$
to $\dg(\lambda)$ by adding $n$ $r$-ribbons in succession,
where the southwesternmost box of each new
$r$-ribbon lies strictly south and weakly west of the southwesternmost box 
of the previously added $r$-ribbon.
\end{remark}

\begin{remark}\label{rem:dual-polyribbon}
If $\lambda/\mu$ is an $r^n$-polyribbon, 
then $\lambda'/\mu'$ is an $(r^n)'$-polyribbon, and conversely.
\end{remark}

\begin{example}
For $\mu = (3,1)$ and $\lambda = (4,4,2,2,2,2)$, $\lambda/\mu$ is the following skew shape:
\[
\y{3,1}*[*(gray)]{3+1,1+3,2,2,2,2}
\]
The skew shape $\lambda/\mu$ is a dual $4^3$-polyribbon 
since it can be constructed as follows:
 \ytableausetup{aligntableaux = top}
    \[\begin{array}{ccccccc}
        \y{3,1} & \to &\y{3,1}*[*(gray)]{3+1,1+3}&\to& \y{4,4}*[*(gray)]{0,0,2,1,1}& \to & \y{4,4,2,1,1}*[*(gray)]{0,0,0,1+1,1+1,2}\\
         \mu = \gamma_{(0)} & &\gamma_{(1)}& &\gamma_{(2)}& &\gamma_{(3)} = \lambda
    \end{array}\]
The values of $\bot\left(\gamma_{(i)}/\gamma_{(i-1)}\right)$ for $i=1,2,3$
are $2$, $1$, and $1$.  This polyribbon has sign
$\sgn'_4(\lambda/\mu)=(-1)\cdot 1 \cdot 1=-1$.
\end{example}

\begin{prop}\label{prop:ep-plethysm}
    Given a partition $\mu$ and nonnegative integers $n$ and $r$, 
    \[
  s_\mu \cdot e_n[p_r] = \sum\limits_{\lambda} \sgn'_r(\lambda/\mu) s_\lambda,
    \]
   where the sum is over all partitions
$\lambda$ obtained by adding a dual $r^n$-polyribbon to $\mu$.
\end{prop}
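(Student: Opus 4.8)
The plan is to derive this formula from the complete-homogeneous version in \cref{thm:r-decomp} by applying the standard involution $\omega$. Since $\omega$ is an algebra automorphism satisfying $\omega(s_\nu)=s_{\nu'}$, applying it to the identity $s_\mu\cdot h_n[p_r]=\sum_\lambda\sgn_r(\lambda/\mu)s_\lambda$ and invoking \cref{prop:omega-plethysm} on the left-hand side gives
\[ (-1)^{n(r-1)}\,s_{\mu'}\cdot e_n[p_r]=\sum_\lambda\sgn_r(\lambda/\mu)\,s_{\lambda'}, \]
where $\lambda$ ranges over all shapes with $\lambda/\mu$ an $r^n$-polyribbon. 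First I would reindex the right-hand sum by setting $\kappa=\lambda'$; by \cref{rem:dual-polyribbon}, $\kappa$ then ranges exactly over the shapes obtained by adding a dual $r^n$-polyribbon to $\mu'$. Once the coefficients are matched up correctly, cancelling the global factor $(-1)^{n(r-1)}$ and observing that $\mu'$ is an arbitrary partition (as $\mu$ varies) yields the stated identity.

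The crux, which I expect to be the main obstacle, is the sign comparison: I must show $\sgn_r(\lambda/\mu)=(-1)^{n(r-1)}\sgn'_r(\kappa/\mu')$, so that the prefactor $(-1)^{n(r-1)}$ on the right cancels the one on the left. I would argue this ribbon by ribbon. Let $\mu=\gamma_{(0)}\subseteq\cdots\subseteq\gamma_{(n)}=\lambda$ be the unique decomposition certifying the $r^n$-polyribbon. Conjugating each shape gives $\mu'=\gamma_{(0)}'\subseteq\cdots\subseteq\gamma_{(n)}'=\kappa$ with each $\gamma_{(i)}'/\gamma_{(i-1)}'$ again an $r$-ribbon, and since conjugation interchanges rows and columns the top-row condition $\top(\gamma_{(i)}/\gamma_{(i-1)})\geq\top(\gamma_{(i+1)}/\gamma_{(i)})$ becomes precisely the leftmost-column condition $\bot(\gamma_{(i)}'/\gamma_{(i-1)}')\geq\bot(\gamma_{(i+1)}'/\gamma_{(i)}')$ defining a dual polyribbon. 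Thus the conjugated sequence is the (necessarily unique) witness for $\kappa/\mu'$, which both confirms the reindexing and lets me compute $\sgn'_r(\kappa/\mu')$ from it.

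For the per-ribbon sign, I would use the fact that an $r$-ribbon occupying $s$ rows spans $r-s+1$ columns (it is connected and contains no $2\times2$ square), so its conjugate occupies $r-s+1$ rows and carries sign $(-1)^{(r-s+1)-1}=(-1)^{r-1}(-1)^{s-1}$. Since $\sgn(\gamma_{(i)}/\gamma_{(i-1)})=(-1)^{s-1}$, each conjugated ribbon acquires an extra factor $(-1)^{r-1}$, and multiplying over all $n$ ribbons gives $\sgn'_r(\kappa/\mu')=(-1)^{n(r-1)}\sgn_r(\lambda/\mu)$; equivalently $\sgn_r(\lambda/\mu)=(-1)^{n(r-1)}\sgn'_r(\kappa/\mu')$, because $(-1)^{n(r-1)}$ is its own inverse. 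Substituting this into the conjugated identity cancels the prefactors and completes the proof. The only point requiring care is that every nonempty $r$-ribbon really does span between $1$ and $r$ rows, which guarantees the row–column count is valid and the exponent arithmetic is correct.
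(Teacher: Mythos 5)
Your proposal is correct and follows essentially the same route as the paper's proof: apply $\omega$ to the identity of \cref{thm:r-decomp}, invoke \cref{prop:omega-plethysm}, transport the polyribbon decomposition across conjugation via \cref{rem:dual-polyribbon}, and account for the sign ribbon by ribbon using the fact that an $r$-ribbon in $\ell$ rows spans $r+1-\ell$ columns. The per-ribbon sign bookkeeping you give, yielding the factor $(-1)^{n(r-1)}$ that cancels the plethystic prefactor, matches the paper's computation exactly.
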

\begin{proof}
Recall from~\cref{thm:r-decomp} that
\[s_\mu\cdot h_n[p_r] = \sum\limits_{\nu} \sgn_r(\nu/\mu) s_\nu, \]
where the sum is over all partitions $\nu$ obtained by adding an
$r^n$-polyribbon to $\mu$. Acting on both sides by
$\omega$ and then using \cref{prop:omega-plethysm} gives 
\[
s_{\mu'}\cdot (-1)^{n(r-1)}e_n[p_r] 
= \sum\limits_{\nu} \sgn_{r}(\nu/\mu)s_{\nu'}.
\]
Replacing $\mu'$ by $\mu$ gives 
    \[
s_{\mu}\cdot e_n[p_r] = (-1)^{n(r-1)}\sum\limits_{\nu} 
\sgn_{r}(\nu/\mu')s_{\nu'},
\]
where the sum is over partitions $\nu$ obtained by adding an $r^n$-polyribbon
 to $\mu'$, the conjugate partition of $\mu$. 
Equivalently, by \cref{rem:dual-polyribbon},
$\nu'$ is obtained by adding the dual $r^n$-polyribbon $\nu'/\mu$ to $\mu$.
Defining $\lambda = \nu'$, it suffices to
show $\sgn_r'(\lambda/\mu)= (-1)^{n(r-1)}\sgn_r(\nu/\mu')$.  
If a skew shape $\alpha/\beta$ is a $r$-ribbon covering $\ell$ rows, 
then its sign is $(-1)^{\ell-1}$. 
The number of columns spanned by this ribbon is 
$r+1-\ell$ which determines the sign of $\alpha'/\beta'$, 
i.e., $\sgn(\alpha'/\beta') = (-1)^{r-\ell}$.
Let the $r^n$-polyribbon $\nu/\mu'$ have the decomposition 
$\gamma_{(0)},\gamma_{(1)},\ldots,\gamma_{(n)}$
as in~\cref{eq:polyribbon-decomp}, where each $\gamma_{(i)}/\gamma_{(i-1)}$ 
covers $\ell_i$ rows and $r+1-\ell_i$ columns. This gives us 
\begin{align*}
 \sgn_r'(\lambda/\mu) &= (-1)^{(r-\ell_1) + (r-\ell_2) + \ldots +(r-\ell_n)}
\\ &=(-1)^{nr}(-1)^{(\ell_1-1)+(\ell_2-1)+\cdots+(\ell_n-1)+n}
\\ &=(-1)^{n(r-1)}\sgn_r(\nu/\mu').  \qedhere
\end{align*}
\end{proof}

For types $\tau$ and $\sigma$,
we say that $\tau/\sigma$ is a \textit{dual $d^r$-tensor polyribbon} 
if, for some partition $\lambda$ of $d$, 
each $\tau|_k$ is obtained from $\sigma|_k$ by 
adding a dual $r^{m_k(\lambda)}$-polyribbon.
We call the partition $\lambda$ 
the \textit{associated partition of $\tau/\sigma$}.
In this situation,
define $\sgn_r^+(\tau/\sigma) 
= \prod\limits_{k=1}^\infty \sgn'_r((\tau|_k)/(\sigma|_k))$ 
and $\sgn^-_r(\tau/\sigma) 
= (-1)^{\ell(\lambda)}\prod\limits_{k=1}^\infty 
 \sgn'_r((\tau|_k)/(\sigma|_k))$,
where $\lambda$ is the associated partition of $\tau/\sigma$. 
The extra power $(-1)^{\ell(\lambda)}$ is the total number of
$r$-ribbons (within the various polyribbons) that are added
to $\sigma$ to reach $\tau$.

\begin{theorem}\label{thm:s-times-E1}
Let $\sigma$ be any type and $d^r$ be a block. Then 
\[s_\sigma^\otimes E^+_{d^r} = \sum\limits_{\tau} 
\sgn_r^+(\tau/\sigma) s_\tau^\otimes\quad\mbox{ and }\quad
 s_\sigma^\otimes E_{d^r} = \sum\limits_{\tau} 
\sgn^-_r(\tau/\sigma) s_\tau^\otimes,\]
where the sums range over types $\tau$ such that $\tau/\sigma$ 
is a dual $d^r$-tensor polyribbon.
\end{theorem}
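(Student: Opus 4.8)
The plan is to mirror exactly the argument used for \cref{thm:s-times-H1}, replacing the complete homogeneous factors by elementary ones and tracking the extra sign. First I would invoke \cref{prop:E-expansions}(c) to write
\[
s_\sigma^\otimes E^+_{d^r}
= \sum_{\lambda\vdash d}
s_{\sigma|_1}\cdot e_{m_1(\lambda)}[p_r]
\otimes s_{\sigma|_2}\cdot e_{m_2(\lambda)}[p_r]
\otimes\cdots
\otimes s_{\sigma|_k}\cdot e_{m_k(\lambda)}[p_r]\otimes\cdots.
\]
The point of the earlier propositions is that each tensor factor is now a product of the form $s_{\sigma|_k}\cdot e_n[p_r]$ with $n=m_k(\lambda)$, which is precisely the object whose Schur expansion \cref{prop:ep-plethysm} computes. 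Applying that proposition to the $k$th factor expands it as $\sum_{\nu_{(k)}} \sgn'_r(\nu_{(k)}/(\sigma|_k)) s_{\nu_{(k)}}$, summed over partitions $\nu_{(k)}$ obtained from $\sigma|_k$ by adding a dual $r^{m_k(\lambda)}$-polyribbon.

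Next I would distribute the tensor product over these sums, exactly as in the proof of \cref{thm:s-times-H1}. For a fixed $\lambda$, a choice of one $\nu_{(k)}$ in each position assembles into a type $\tau$ with $\tau|_k = \nu_{(k)}$, and by definition this means $\tau/\sigma$ is a dual $d^r$-tensor polyribbon with associated partition $\lambda$. The product of the factor signs is $\prod_{k\geq 1}\sgn'_r((\tau|_k)/(\sigma|_k)) = \sgn^+_r(\tau/\sigma)$, which yields the first formula. For the $E$ case I would instead start from \cref{prop:E-expansions}(d), whose summand carries the prefactor $(-1)^{\ell(\lambda)}$; the rest of the computation is identical, and multiplying $\sgn^+_r(\tau/\sigma)$ by $(-1)^{\ell(\lambda)}$ produces exactly $\sgn^-_r(\tau/\sigma)$ by its definition. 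This establishes the second formula.

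The only genuine subtlety—and the step I would flag as the main obstacle—is confirming that the decomposition is consistent, namely that distinct pairs $(\lambda,\{\nu_{(k)}\})$ never collapse to give the same $\tau$ in a way that miscounts terms. Here I would point out that for a given target $\tau$ the associated partition $\lambda$ is \emph{recovered} from the data: $m_k(\lambda)$ is the number of dual $r$-ribbons added in position $k$, so $\lambda$ is determined by the tuple of polyribbon shapes $(\tau|_k)/(\sigma|_k)$ together with the fixed value $r$. Since each dual $r^n$-polyribbon has a unique ribbon decomposition (stated when dual polyribbons were defined), there is no double counting, and the sign attached to each $\tau$ is well-defined. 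I would remark that the whole argument is formally parallel to the $H$ case via the involution $\omega$, but carrying it out directly through \cref{prop:ep-plethysm} avoids reapplying $\omega$ at the polysymmetric level.
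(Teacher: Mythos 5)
Your proposal is correct and follows essentially the same route as the paper's proof: expand $E^\pm_{d^r}$ via \cref{prop:E-expansions}(c),(d), apply \cref{prop:ep-plethysm} factorwise, and distribute the tensor product, with the $(-1)^{\ell(\lambda)}$ prefactor accounting for the sign difference between $\sgn_r^+$ and $\sgn_r^-$. Your added remark on recovering $\lambda$ from $\tau$ (so no double counting occurs) is a sound observation that the paper leaves implicit.
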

\begin{proof}
We prove it for the case of $E_{d^r}^+$, and the same 
proof works for $E_{d^r}$ with an appropriate change of sign. 
From~\cref{prop:E-expansions}, we obtain 
$$s_\sigma^\otimes \cdot E^+_{d^r} 
= \sum\limits_{\lambda\vdash d } s_{\sigma|_1}\cdot e_{m_1(\lambda)}[p_r]  
\otimes \cdots \otimes s_{\sigma|_k}\cdot e_{m_k(\lambda)}[p_r]\otimes \cdots.$$
Applying \cref{prop:ep-plethysm} to the above expression,
the $k$th component of the tensor product expands to
$\sum\limits_{\gamma} \sgn'_r(\gamma/(\sigma|_k))s_\gamma$, 
where the sum is over partitions $\gamma$ that arise by adding 
a dual $r^{m_k(\lambda)}$-polyribbon to $\sigma|_k$. 
Using the distributive law gives us our result. 
\end{proof}

\subsection{Rules for $s_{\sigma}^{\otimes}E^+_{\delta}$,
$s_{\sigma}^{\otimes}E_{\delta}$, $\mcM(E^+,s^{\otimes})$, 
and $\mcM(E,s^{\otimes})$}
\label{subsec:s-times-E2}

To obtain the entries of the next transition matrices, 
we define a dual version of the tableaux in \cref{subsec:s-times-H2}. 
Let $\tau$ and $\sigma$ be types. 
Let $\delta=(d_1^{r_1},\ldots,d_s^{r_s})$ be an ordered sequence of blocks.
A \emph{dual tensor polyribbon tableau} (\emph{dual TPRT}) $T$ 
of \emph{shape} $\tau/\sigma$
and \emph{content} $\delta$ is a sequence of types
$\sigma=\tau_{(0)},\tau_{(1)},\ldots,\tau_{(s)}=\tau$ such that,
for all $i$ between $1$ and $s$, $\tau_{(i)}/\tau_{(i-1)}$
is a dual $d_i^{r_i}$-tensor polyribbon.  Let $\TPRT^\prime(\tau/\sigma,\delta)$
be the set of such objects.  We visualize $T$ by drawing
the tensor diagram of $\tau$ and filling all cells in
$\dg(\tau_{(i)})\setminus\dg(\tau_{(i-1)})$ with the value $i$.
Define the two corresponding signs associated with $T$ to be 
$\sgn^+(T) =\prod_{i=1}^s \sgn_{r_i}^+(\tau_{(i)}/\tau_{(i-1)})$ and 
$\sgn^-(T) =\prod_{i=1}^s \sgn_{r_i}^-(\tau_{(i)}/\tau_{(i-1)})$.

\begin{theorem}\label{thm:s-times-H2}
Given a type $\sigma$ and a sequence of blocks
 $\delta = (d_1^{r_1}, \ldots, d_{s}^{r_s})$,
\[ s_\sigma^\otimes E^+_{\delta} = \sum_{\tau} 
\left[\sum_{T\in\TPRT'(\tau/\sigma,\delta)} \sgn^+(T)\right] s_{\tau}^{\otimes} 
\quad\mbox{and}\quad s_\sigma^\otimes E_{\delta} = \sum_{\tau} 
\left[\sum_{T\in\TPRT'(\tau/\sigma,\delta)} \sgn^-(T)\right] s_{\tau}^{\otimes}.
\]
\end{theorem}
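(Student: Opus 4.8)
The plan is to obtain both formulas by iterating \cref{thm:s-times-E1} in exactly the same way that \cref{thm:s-times-H2} (the $H$-version) was obtained by iterating \cref{thm:s-times-H1}, and the way \cref{thm:s-times-P2} was obtained from \cref{thm:s-times-P1}. Since $E^+_\delta = \prod_{i=1}^s E^+_{d_i^{r_i}}$ by definition, I would write $s_\sigma^\otimes E^+_\delta = \left(\cdots\left(\left(s_\sigma^\otimes E^+_{d_1^{r_1}}\right)E^+_{d_2^{r_2}}\right)\cdots\right)E^+_{d_s^{r_s}}$ and apply \cref{thm:s-times-E1} one factor at a time.

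First I would induct on $s$, the number of blocks in $\delta$. The base case $s=1$ is precisely \cref{thm:s-times-E1}. For the inductive step, I would assume the result for the sequence $\delta' = (d_1^{r_1},\ldots,d_{s-1}^{r_{s-1}})$, so that $s_\sigma^\otimes E^+_{\delta'} = \sum_{\rho}\left[\sum_{T'\in\TPRT'(\rho/\sigma,\delta')}\sgn^+(T')\right]s_\rho^\otimes$. Then I multiply on the right by $E^+_{d_s^{r_s}}$ and apply \cref{thm:s-times-E1} to each $s_\rho^\otimes E^+_{d_s^{r_s}}$, expanding it as $\sum_\tau \sgn^+_{r_s}(\tau/\rho)s_\tau^\otimes$ over types $\tau$ such that $\tau/\rho$ is a dual $d_s^{r_s}$-tensor polyribbon. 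Collecting the coefficient of a fixed $s_\tau^\otimes$ gives a double sum over intermediate types $\rho$ and over tableaux $T'\in\TPRT'(\rho/\sigma,\delta')$, weighted by $\sgn^+(T')\sgn^+_{r_s}(\tau/\rho)$.

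The key bookkeeping step is to recognize this double sum as a sum over $\TPRT'(\tau/\sigma,\delta)$. A dual TPRT of shape $\tau/\sigma$ and content $\delta$ is, by definition, a chain $\sigma = \tau_{(0)}\subseteq \cdots \subseteq \tau_{(s)} = \tau$ in which each step $\tau_{(i)}/\tau_{(i-1)}$ is a dual $d_i^{r_i}$-tensor polyribbon; such a chain is exactly the data of a chain of length $s-1$ (an element of $\TPRT'(\rho/\sigma,\delta')$ with $\rho = \tau_{(s-1)}$) together with a final dual $d_s^{r_s}$-tensor polyribbon step $\tau/\rho$. This bijection between $\bigsqcup_\rho \TPRT'(\rho/\sigma,\delta')\times\{\text{final step }\tau/\rho\}$ and $\TPRT'(\tau/\sigma,\delta)$ is the crux, and under it the sign factors multiply correctly: $\sgn^+(T) = \prod_{i=1}^s\sgn^+_{r_i}(\tau_{(i)}/\tau_{(i-1)}) = \sgn^+(T')\cdot\sgn^+_{r_s}(\tau/\rho)$, matching the weight that appeared. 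This yields the coefficient $\sum_{T\in\TPRT'(\tau/\sigma,\delta)}\sgn^+(T)$, completing the induction for $E^+$.

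For the $E$ case the argument is identical, replacing $\sgn^+_{r}$ by $\sgn^-_r$ and $\sgn^+$ by $\sgn^-$ throughout; the multiplicativity $\sgn^-(T) = \sgn^-(T')\cdot\sgn^-_{r_s}(\tau/\rho)$ holds for the same reason, since each $\sgn^-_{r_i}$ carries its own $(-1)^{\ell(\lambda_i)}$ factor and these multiply across the chain. The main obstacle is not any analytic difficulty but rather the careful verification that the factorization of a dual TPRT into its first $s-1$ steps and its last step is a genuine bijection respecting the sign weights; once that is stated cleanly, the result is immediate. Indeed, since the whole argument is structurally parallel to the deduction of \cref{thm:s-times-H2} from \cref{thm:s-times-H1}, I would keep the proof short and simply note that it follows by iterating \cref{thm:s-times-E1} exactly as \cref{thm:s-times-P2} follows from \cref{thm:s-times-P1}.
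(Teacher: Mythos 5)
Your proposal is correct and is essentially the paper's own proof: the paper deduces both identities by iterating \cref{thm:s-times-E1} exactly as \cref{thm:s-times-P2} is deduced from \cref{thm:s-times-P1}, which is the induction-on-$s$ argument you carry out. Your added detail (the bijection between length-$s$ chains and pairs consisting of a length-$(s-1)$ chain plus a final dual tensor polyribbon step, with multiplicative signs) is just the explicit content of that iteration.
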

\begin{proof}
These follow by iterating \cref{thm:s-times-E1}
in the same way that \cref{thm:s-times-P2} is deduced
from \cref{thm:s-times-P1}.
\end{proof}

\begin{example}
In this example, we construct one object of shape 
$\tau = 1^{4^32^51^2}2^{2^51^5}3^{53^221}$ that appears 
in the $s^{\otimes}$-expansions of $s_{\sigma}^\otimes E_\delta$ 
and $s_{\sigma}^\otimes E^+_\delta$ for 
$\sigma = 1^{2}3^{1,1,1}$ and $\delta = (11^5, 5^6)$.
We first pick the partition $\lambda= 1^22^33^1 \vdash 11$. 
Starting with the tensor diagram of $\tau$,
we insert a dual $5^2$-polyribbon in the first diagram, 
a dual $5^3$-polyribbon in the second diagram, and 
a dual $5^1$-polyribbon in the third diagram. 
We label the cells in these polyribbons by 1. 
Next we pick the partition $\mu=1^23^1\vdash 5$. 
We continue by adding a dual $6^2$-polyribbon to the first diagram
and a dual $6^1$-polyribbon to the third diagram, with all new cells
labeled by $2$. Here is one possible object
$T\in \TPRT(\tau/\sigma, \delta)$ arising from these insertions:
\ytableausetup{aligntableaux = top}
\[
\yt{{}{}11,1112,{*(lightgray)1}222,{*(lightgray)1}2,{*(lightgray)1}2,{*(lightgray)1}{*(gray)2},{*(lightgray)1}{*(gray)2},{*(gray)2}{*(gray)2},{*(gray)2},{*(gray)2}} \otimes \yt{11,1{*(lightgray)1},1{*(lightgray)1},1{*(lightgray)1},{*(lightgray)1}{*(lightgray)1},1,1,1,1,1} \otimes \yt{{}1111,{}12,{}22,22,2}
\]
We compute $\sgn^+(T)=1$ and $\sgn^-(T)=(-1)^{\ell(\lambda)+\ell(\mu)}\sgn^+(T)
 =(-1)^{6+3}=-1$.
\end{example}

\begin{corollary}\label{cor:M(E,s-tensor)}
For all types $\sigma,\tau\vdt n$, the coefficients of $s_{\tau}^{\otimes}$
in the $s^{\otimes}$-expansions of $E_{\sigma}^+$ and $E_{\sigma}$ are
\[ \mcM(E^+,s^{\otimes})_{\tau,\sigma}=
  \sum_{T\in\TPRT'(\tau,\sigma)} \sgn^+(T)\quad\mbox{and}\quad 
 \mcM(E,s^{\otimes})_{\tau,\sigma}=
  \sum_{T\in\TPRT'(\tau,\sigma)} \sgn^-(T). \]
\end{corollary}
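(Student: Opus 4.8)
The plan is to obtain \cref{cor:M(E,s-tensor)} as the special case of \cref{thm:s-times-H2} (the dual TPRT version) in which the starting type $\sigma$ is the empty type. Recall that $\PLambda = \Q[E^+_{d^m}:d,m>0] = \Q[E_{d^m}:d,m>0]$, and that for any sequence of blocks $\delta$ the product $E^+_\delta$ (respectively $E_\delta$) is built up by successively multiplying the $E^+_{d^r_i}$ factors. The key observation is that $s^\otimes_{\vn} = 1$, where $\vn$ denotes the empty type, since $s_\vn = 1$ in each tensor slot. Thus $E^+_\sigma = s^\otimes_\vn\,E^+_\sigma$ and $E_\sigma = s^\otimes_\vn\,E_\sigma$, putting us exactly in the situation covered by \cref{thm:s-times-H2} with the starting type taken to be $\vn$ and the content sequence taken to be $\sigma$ itself.

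First I would invoke \cref{thm:s-times-H2} with $\sigma$ replaced by the empty type and $\delta$ replaced by $\sigma$. This gives
\[
 E^+_\sigma = s^\otimes_\vn\,E^+_\sigma = \sum_{\tau}
 \left[\sum_{T\in\TPRT'(\tau/\vn,\sigma)} \sgn^+(T)\right] s^\otimes_\tau,
\]
and the analogous identity for $E_\sigma$ with $\sgn^-$ in place of $\sgn^+$. Next I would note that, by the conventions set up just before \cref{thm:s-times-H2}, a dual TPRT of shape $\tau/\vn$ is exactly a dual TPRT whose starting type is empty; in harmony with the abbreviation introduced for $\TRHT$ in \cref{subsec:s-times-P2} (writing $\TRHT(\tau,\delta)$ when $\sigma$ is the empty type), we write $\TPRT'(\tau,\sigma)$ for $\TPRT'(\tau/\vn,\sigma)$. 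Finally, by the definition of the transition matrix in \eqref{eq:trans-mat2}, the coefficient of $s^\otimes_\tau$ in the $s^\otimes$-expansion of $E^+_\sigma$ is precisely $\mcM(E^+,s^\otimes)_{\tau,\sigma}$, and likewise for $E_\sigma$; reading off these coefficients from the displayed expansions yields the two stated formulas.

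There is essentially no obstacle here: the corollary is a direct instantiation of the preceding theorem, exactly parallel to the way \cref{cor:M(P,s-tensor)} follows from \cref{thm:s-times-P2} and \cref{cor:M(H,s-tensor)} follows from \cref{thm:s-times-H2} (the $H$ version). The only point requiring care is making sure the degree bookkeeping is consistent, namely that every $\tau$ receiving a nonzero coefficient satisfies $\tau\vdt n$ when $\sigma\vdt n$; this is automatic because each dual $r$-ribbon added in a dual $d_i^{r_i}$-tensor polyribbon contributes weight $d_i r_i = |d_i^{r_i}|$ to the total, so the final type $\tau$ has $|\tau| = |\sigma| = n$. Hence the sum over $\tau$ may be restricted to types of weight $n$, matching the indexing of the transition matrix.
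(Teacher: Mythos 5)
Your proof is correct and is exactly the paper's (implicit) argument: the corollary is obtained by instantiating the dual-TPRT Pieri-type theorem with empty starting type and content sequence $\delta=\sigma$, so that $s^\otimes_{\vn}=1$ and the coefficients of $s^\otimes_\tau$ are read off via \eqref{eq:trans-mat2}, just as \cref{cor:M(P,s-tensor)} follows from \cref{thm:s-times-P2}. Your added degree-bookkeeping check (each dual $d_i^{r_i}$-tensor polyribbon contributes total weight $d_ir_i$, so $|\tau|=|\sigma|=n$) is a harmless extra verification the paper leaves unstated.
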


\section{Expansions in the $p^{\otimes}$ Basis}
\label{sec:p-expand}

\subsection{Algebraic Development of $p^{\otimes}$-Expansions}
\label{subsec:frag-alg-p-tensor}

Given integer partitions 
 $\lambda=(1^{m_1(\lambda)}2^{m_2(\lambda)}\cdots)$ and
     $\mu=(1^{m_1(\mu)}2^{m_2(\mu)}\cdots)$,
define their \emph{union} to be
$\lambda\cup\mu=(1^{m_1(\lambda)+m_1(\mu)}2^{m_2(\lambda)+m_2(\mu)}\cdots)$,
which is the partition obtained by combining all the parts of $\lambda$
and $\mu$ (with multiplicities) into a new weakly decreasing list.
By definition of power-sums, we have $p_{\lambda}p_{\mu}=p_{\lambda\cup\mu}$.
More generally, given integer partitions $\lambda^{(1)},\ldots,\lambda^{(s)}$,
$\prod_{i=1}^s p_{\lambda^{(i)}}=p_{\lambda^{(1)}\cup\cdots\cup\lambda^{(s)}}$.

Similar results hold for types and the $p^{\otimes}$-basis of $\PLambda$.
For any types $\sigma$ and $\rho$, let $\sigma\cup\rho$ be the type
obtained by merging all the blocks in $\sigma$ and $\rho$ (with multiplicities)
into a new list of blocks. Equivalently, using the union operation on integer
partitions, we can define $\sigma\cup\rho$ by
$(\sigma\cup\rho)|_k=\sigma|_k\cup\rho|_k$ for all $k\geq 1$.
It follows from this definition that $p_{\sigma\cup\rho}^{\otimes}
 =p_{\sigma}^{\otimes}p_{\rho}^{\otimes}$. More generally,
for all types $\tau^{(1)},\ldots,\tau^{(s)}$, 
\begin{equation}\label{eq:ptens-product}
\prod_{i=1}^s p^{\otimes}_{\tau^{(i)}}
  =\prod_{i=1}^s \bigotimes_{k\geq 1} p_{\tau^{(i)}|_k}
  =\bigotimes_{k\geq 1}\ p_{\tau^{(1)}|_k\cup\cdots\cup\tau^{(s)}|_k}
  =p^{\otimes}_{\tau^{(1)}\cup\cdots\cup\tau^{(s)}}.
\end{equation}
Combining this formula with the distributive law, we get an algebraic
prescription for the $p^{\otimes}$-expansion of a product $G_1G_2\cdots G_s$ 
assuming we already know the $p^{\otimes}$-expansions of each $G_i$.
In particular, to get the transition matrices 
$\mcM(P,p^{\otimes})$, $\mcM(H,p^{\otimes})$, $\mcM(E^+,p^{\otimes})$, 
and $\mcM(E,p^{\otimes})$, it suffices to find the $p^{\otimes}$-expansions
of $P_{d^r}$, $H_{d^r}$, $E^+_{d^r}$, and $E_{d^r}$. 

Before presenting these expansions, we introduce some notation.
For each integer partition $\lambda$,
define $z_\lambda = \prod\limits_{i\geq 1} i^{m_i(\lambda)} m_i(\lambda)!$.
The factor $z_\lambda$ appears when finding $p$-expansions of certain
symmetric functions. In particular, $h_n = \sum\limits_{\lambda\vdash n} 
\dfrac{p_\lambda}{z_\lambda}$ and 
$e_n = \sum\limits_{\lambda\vdash n} (-1)^{n-\ell(\lambda)} 
\dfrac{p_\lambda}{z_\lambda}$ (see~\cite[\S9.19]{loehr-comb}).
The polysymmetric analog of $z_{\lambda}$ is defined by
$z^\otimes_\tau = \prod\limits_{k\geq 1} z_{\tau|_k}$ for a type $\tau$.

\begin{example}
 For $\tau =(3^23^22^32^22^21^41^2)$, we have associated
partitions ${\tau|_1} = (4,2) = 4^12^1$, ${\tau|_2} = (3,2,2) = 3^12^2$, 
and ${\tau|_3} = (2,2) = 2^2$.  We compute 
$z_{\tau}^\otimes = (4^1 1! 2^1 1!) \cdot (3^1 1! 2^2 2!)\cdot 
(2^2 2!) = 1536$. 
\end{example}

For a type $\tau =(d_1^{\,m_1}d_2^{\,m_2}\ldots d_s^{\,m_s})$ 
and an integer $r>0$, define the type 
$\tau^r=(d_1^{\,rm_1}d_2^{\,rm_2}\ldots d_s^{\,rm_s})$. 
Recall from~\S\ref{subsec:polysymm} that
$\sgn(\tau) =\prod_{i=1}^s (-1)^{m_i}=\prod_{k\geq 1} (-1)^{\area(\tau|_k)}$
and $\ell(\tau) = s=\sum_{k\geq 1} \ell(\tau|_k)$. 
The net exponent of $-1$ in $\sgn(\tau)$ is the number of blocks of $\tau$ 
with odd multiplicity, while the net exponent of $-1$ in 
$(-1)^{\ell(\tau)}\sgn(\tau)$
is the number of blocks of $\tau$ with even multiplicity.

\begin{prop}\label{prop:block-in-ptensor}
For positive integers $d$ and $r$, the following $p^\otimes$-expansions hold.
\\ (a) $P_{d^r} = \sum\limits_{k\mid d} k\, p^\otimes_{k^{rd/k}}$.
\\ (b) $H_{d^r} = \sum\limits_{\tau \vdt d} 
\dfrac{p^\otimes_{\tau^r}}{z^\otimes_\tau}$.
\\ (c) $E^+_{d^r} = \sum\limits_{\tau\vdt d} (-1)^{\ell(\tau)} 
\sgn(\tau)\dfrac{p^\otimes_{\tau^r}}{z^\otimes_\tau}$.
\\ (d) $E_{d^r} = \sum\limits_{\tau\vdt d} (-1)^{\ell(\tau)}
\dfrac{p^\otimes_{\tau^r}}{z^\otimes_\tau}$.
\end{prop}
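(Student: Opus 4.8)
The plan is to treat part~(a) directly and to handle parts~(b)--(d) by a single common template. For~(a), I would simply reread~\eqref{eq:Pdm-in-p} with $m$ replaced by $r$, giving $P_{d^r}=\sum_{k\mid d}k\,p_{dr/k}(\xx_{k*})$, and then observe that $p_{dr/k}(\xx_{k*})$ is exactly the pure tensor $p^{\otimes}_{k^{rd/k}}$: the type $k^{rd/k}$ is the single block of degree $k$ and multiplicity $rd/k$, for which $(k^{rd/k})|_k=(rd/k)$ is a one-part partition and all other restrictions are empty. Summing over the divisors $k$ of $d$ yields the stated formula. For~(b)--(d), the common starting point is \cref{prop:H-expansions}(b) and \cref{prop:E-expansions}(c),(d), which already express $H_{d^r}$, $E^+_{d^r}$, and $E_{d^r}$ as sums over $\lambda\vdash d$ of tensor products whose $k$th factor is $h_{m_k(\lambda)}[p_r]$ (resp.\ $e_{m_k(\lambda)}[p_r]$), possibly weighted by $(-1)^{\ell(\lambda)}$.

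The first step is to expand each plethystic factor into power-sums. Using the classical identities $h_n=\sum_{\nu\vdash n}p_\nu/z_\nu$ and $e_n=\sum_{\nu\vdash n}(-1)^{n-\ell(\nu)}p_\nu/z_\nu$, together with the Monomial Substitution Rule $p_\nu[p_r]=p_\nu(\xx^r)=p_{r\nu}$ (where $r\nu$ multiplies every part of $\nu$ by $r$), I obtain $h_n[p_r]=\sum_{\nu\vdash n}p_{r\nu}/z_\nu$ and $e_n[p_r]=\sum_{\nu\vdash n}(-1)^{n-\ell(\nu)}p_{r\nu}/z_\nu$. Here I use that $f\mapsto f[p_r]$ is $\Q$-linear, so the expansion of $h_n$ or $e_n$ into power-sums passes through the plethysm.

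The second, and conceptually central, step is a reindexing. After substituting these expansions, each of $H_{d^r},E^+_{d^r},E_{d^r}$ becomes a double sum: over $\lambda\vdash d$ and over choices of a partition $\nu^{(k)}\vdash m_k(\lambda)$ for every $k\geq 1$ (with $\nu^{(k)}=\varnothing$ whenever $m_k(\lambda)=0$). I would note that such data is equivalent to a single type $\tau\vdt d$ via $\tau|_k=\nu^{(k)}$: the conditions $\nu^{(k)}\vdash m_k(\lambda)$ force $\area(\tau|_k)=m_k(\lambda)$, which both recovers $\lambda$ as $\lambda=(1^{\area(\tau|_1)}2^{\area(\tau|_2)}\cdots)$ and guarantees $|\tau|=\sum_k k\,\area(\tau|_k)=|\lambda|=d$. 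Under this bijection the tensor factor $\bigotimes_k p_{r\nu^{(k)}}(\xx_{k*})$ equals $p^{\otimes}_{\tau^r}$, since $(\tau^r)|_k=r(\tau|_k)=r\nu^{(k)}$, and the scalar $\prod_k 1/z_{\nu^{(k)}}$ equals $1/z^{\otimes}_\tau$. This immediately gives~(b): $H_{d^r}=\sum_{\tau\vdt d}p^{\otimes}_{\tau^r}/z^{\otimes}_\tau$.

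The main obstacle I expect is the sign bookkeeping in~(c) and~(d). For each term, the accumulated sign from the $e_n[p_r]$ expansions is $\prod_k(-1)^{m_k(\lambda)-\ell(\nu^{(k)})}=(-1)^{\ell(\lambda)-\ell(\tau)}$, using $\sum_k m_k(\lambda)=\ell(\lambda)$ and $\sum_k\ell(\tau|_k)=\ell(\tau)$. The key identity I would isolate is $(-1)^{\ell(\lambda)}=\sgn(\tau)$; this holds because $\ell(\lambda)=\sum_k\area(\tau|_k)$, which is precisely the exponent in the formula $\sgn(\tau)=(-1)^{\sum_k\area(\tau|_k)}$ recorded in~\S\ref{subsec:polysymm}. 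Thus the term sign in~(c) is $(-1)^{\ell(\lambda)}(-1)^{\ell(\tau)}=\sgn(\tau)(-1)^{\ell(\tau)}$, as claimed, while in~(d) the extra global factor $(-1)^{\ell(\lambda)}$ cancels one copy of $(-1)^{\ell(\lambda)}$ and leaves $(-1)^{\ell(\tau)}$. Reindexing by $\tau$ exactly as in~(b) then yields the stated formulas. The only care needed is the routine manipulation $(-1)^{-a}=(-1)^a$ and the fact that the correspondence $(\lambda,(\nu^{(k)})_k)\leftrightarrow\tau$ is the same weight-$d$ bijection used in~(b).
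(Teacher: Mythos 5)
Your proposal is correct and follows essentially the same route as the paper's proof: both rest on \cref{prop:H-expansions} and \cref{prop:E-expansions}, the classical expansions $h_n=\sum_{\nu}p_\nu/z_\nu$ and $e_n=\sum_{\nu}(-1)^{n-\ell(\nu)}p_\nu/z_\nu$, the plethysm scaling $p_\nu[p_r]=p_{r\nu}$, the same reindexing bijection $(\lambda,(\nu^{(k)})_k)\leftrightarrow\tau$, and the same sign bookkeeping via $(-1)^{\ell(\lambda)}=\sgn(\tau)$ and $\sum_k\ell(\nu^{(k)})=\ell(\tau)$. The only (cosmetic) difference is organizational: the paper first proves the $r=1$ case and then applies the scaling step~\eqref{eq:Fd-to-Fdr} uniformly, whereas you carry the factor $[p_r]$ through the computation from the start.
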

\begin{proof}
Suppose we have found a required expansion when $r=1$,
say $F_d=\sum_{\tau} a_{\tau}p^{\otimes}_{\tau}$
where $F$ is $P$ or $H$ or $E^+$ or $E$ and $a_{\tau}\in\Q$.
The plethysm property $p_m[p_r] = p_{rm}$ (for positive integers $m,r$) 
extends to $p_{\lambda}[p_r] = p_{r\lambda}$ (for a partition $\lambda$
and integer $r$), where $r\lambda$ is $\lambda$ with all parts scaled by $r$.
Then the $p^{\otimes}$-expansion for general $r$ is
\begin{equation}\label{eq:Fd-to-Fdr}
F_{d^r} = \sum\limits_{\tau} a_\tau p_{\tau|_1}[p_r] 
\otimes p_{\tau|_2}[p_r] \otimes \ldots
= \sum\limits_{\tau} a_{\tau} p_{r\tau|_1} \otimes p_{r\tau|_2} \otimes \ldots = \sum\limits_{\tau} a_{\tau} p^\otimes_{\tau^r}.
\end{equation}

(a)~The sum $\sum\limits_{j\geq 1}x_{k,j}^{d/k}$ is the power-sum 
symmetric function $p_{d/k}(\xx_{k*})=p^\otimes_{k^{d/k}}$. 
Thus, \cref{eq:Pd} can be rephrased as 
$P_{d} = \sum\limits_{k\mid d} k\, p^\otimes_{k^{d/k}}$. 
Part~(a) now follows from~\eqref{eq:Fd-to-Fdr}.

(b)~By~\cref{prop:H-expansions}, 
$H_d = \sum\limits_{\lambda\vdash d} 
h_{m_1(\lambda)} \otimes h_{m_2(\lambda)} \otimes \cdots$. 
Using $h_n = \sum\limits_{\mu\vdash n} \dfrac{p_\mu}{z_\mu}$ 
on each factor gives 
\begin{equation}\label{eq:Hd-in-ptensor}
 H_d=
\sum\limits_{\lambda\vdash d} 
\sum\limits_{\mu^{(1)}\vdash m_1(\lambda)} 
\sum\limits_{\mu^{(2)}\vdash m_2(\lambda)} 
\cdots \sum\limits_{\mu^{(d)}\vdash m_d(\lambda)} 
\frac{p_{\mu^{(1)}}}{z_{\mu^{(1)}}}\otimes
\frac{p_{\mu^{(2)}}}{z_{\mu^{(2)}}}\otimes
\cdots\otimes \frac{p_{\mu^{(d)}}}{z_{\mu^{(d)}}}.
\end{equation}
The iterated sum here can be rewritten as a sum over types $\tau\vdt d$ via 
the bijection sending $(\lambda,\mu^{(1)},\mu^{(2)},\ldots,\mu^{(d)})$
to the type $\tau$ with $\tau|_k=\mu^{(k)}$ for all $k\geq 1$. We obtain
$H_d = \sum\limits_{\tau\vdt d} \dfrac{p^\otimes_\tau}{z_\tau^{\otimes}}$. 
Part~(b) now follows from~\eqref{eq:Fd-to-Fdr}.
(c) The proof for $E^+_d$ is like the proof for $H_d$, 
but with bookkeeping for signs. 
The $k$th tensor factor in~\eqref{eq:Hd-in-ptensor}
contributes the sign $(-1)^{\area(\mu^{(k)}) - \ell(\mu^{(k)})}$.
Converting to a sum over $\tau$ as described above, the $k$th sign factor
becomes $(-1)^{\area(\tau|_k)-\ell(\tau|_k)}$. Taking the product over $k\geq 1$
gives an overall sign of $\sgn(\tau)(-1)^{\ell(\tau)}$ for the coefficient
of $p^{\otimes}_{\tau}$.

(d) For $E_d$, each summand on the right side of~\eqref{eq:Hd-in-ptensor}
now has the sign $$(-1)^{\ell(\lambda)}\prod_{k\geq 1} (-1)^{m_k(\lambda)}
 \prod_{k\geq 1} (-1)^{\ell(\mu^{(k)})}.$$ But $\ell(\lambda)=\sum_{k\geq 1}
 m_k(\lambda)$, so that part of the sign disappears. We are left with
a sign of $(-1)^{\ell(\tau)}$ for the coefficient
of $p^{\otimes}_{\tau}$.
\end{proof}

\begin{example}
In this example, we illustrate~\cref{prop:block-in-ptensor}
for the types $2^3$ and $3^2$. We compute:
\[ \begin{array}{lcl}
P_{2^3} = p^\otimes_{1^6} + 2p^\otimes_{2^3}, & \qquad &
P_{3^2} = p^\otimes_{1^6} + 3p^\otimes_{3^2}, 
\\[6pt]
H_{2^3} = \dfrac{p^\otimes_{1^6}}{2} + \dfrac{p^\otimes_{1^31^3}}{2} 
+ p^\otimes_{2^3}, & \qquad &
H_{3^2} = \dfrac{p^\otimes_{1^6}}{3} + \dfrac{p^\otimes_{1^41^2}}{2} 
+ \dfrac{p^\otimes_{1^21^21^2}}{6} + p^\otimes_{2^21^2} + p^\otimes_{3^2},
\\[6pt]
E^+_{2^3} = -\dfrac{p^\otimes_{1^6}}{2} + \dfrac{p^\otimes_{1^31^3}}{2} 
+ p^\otimes_{2^3}, & \qquad &
E^+_{3^2} = \dfrac{p^\otimes_{1^6}}{3} - \dfrac{p^\otimes_{1^41^2}}{2} 
+ \dfrac{p^\otimes_{1^21^21^2}}{6} + p^\otimes_{2^21^2} + p^\otimes_{3^2}, 
\\[6pt]
E_{2^3} = -\dfrac{p^\otimes_{1^6}}{2} + \dfrac{p^\otimes_{1^31^3}}{2} 
- p^\otimes_{2^3}, & \qquad &
E_{3^2} = -\dfrac{p^\otimes_{1^6}}{3} + \dfrac{p^\otimes_{1^41^2}}{2} 
- \dfrac{p^\otimes_{1^21^21^2}}{6} + p^\otimes_{2^21^2} - p^\otimes_{3^2}.
\end{array} \]
For instance, we compute the coefficient of $p^{\otimes}_{1^41^2}$ 
in $E^+_{3^2}$ as follows. The type producing this term is $\tau=(1^21^1)$.
Here, $\ell(\tau) = (-1)^2 = 1$, $\sgn(\tau) = (-1)^{2+1} = -1$, 
and $z_{\tau}^{\otimes}=z_{(2,1)}=2$. So the required coefficient is $-1/2$.
 \end{example}

Combining~\cref{prop:block-in-ptensor} with the remark 
following~\eqref{eq:ptens-product} leads to algebraic formulas for
$p^{\otimes}$-expansions of various products of polysymmetric functions.
In the following subsections, we supplement these algebraic formulas
with combinatorial formulas that express
the final answers in terms of tableau-like structures.

\subsection{Rule for $p_{\sigma}^{\otimes}P_{\delta}$ 
and $\mcM(P,p^{\otimes})$.}
\label{subsec:p-times-P}

\begin{prop}\label{prop:p-times-P1}
For any type $\sigma$ and block $d^m$,
\[ p_{\sigma}^{\otimes}P_{d^m}
=\sum_{\tau} \wt(\sigma,\tau)p_{\tau}^{\otimes}, \]
where we sum over all types $\tau$ that arise from $\sigma$ by
choosing a positive divisor $k$ of $d$ and inserting one new part
of size $dm/k$ into $\sigma|_k$; and $\wt(\sigma,\tau)=k$ for each such $\tau$.
\end{prop}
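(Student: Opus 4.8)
The plan is to reduce this Pieri-type rule for the $p^{\otimes}$-basis to two facts already in hand: the single-block expansion of $P_{d^m}$ from \cref{prop:block-in-ptensor}(a) and the multiplicativity of the $p^{\otimes}$-basis recorded in \eqref{eq:ptens-product}. First I would invoke \cref{prop:block-in-ptensor}(a) to write
\[ P_{d^m}=\sum_{k\mid d} k\,p^{\otimes}_{k^{dm/k}}, \]
where $k^{dm/k}$ denotes the single block of degree $k$ and multiplicity $dm/k$; as a type its only nonempty restriction is $(k^{dm/k})|_k=(dm/k)$, a one-part partition.

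Next I would multiply by $p_{\sigma}^{\otimes}$ and distribute over the sum, giving $p_{\sigma}^{\otimes}P_{d^m}=\sum_{k\mid d} k\,p_{\sigma}^{\otimes}p^{\otimes}_{k^{dm/k}}$. Applying the two-factor case of \eqref{eq:ptens-product} collapses each product into a single basis element, $p_{\sigma}^{\otimes}p^{\otimes}_{k^{dm/k}}=p^{\otimes}_{\sigma\cup(k^{dm/k})}$. The crucial step is to unwind the union $\sigma\cup(k^{dm/k})$ slot by slot: since $(k^{dm/k})$ has empty restriction in every slot except slot $k$, we get $(\sigma\cup(k^{dm/k}))|_j=\sigma|_j$ for $j\neq k$ and $(\sigma\cup(k^{dm/k}))|_k=\sigma|_k\cup(dm/k)$, which is the partition $\sigma|_k$ with one new part of size $dm/k$ inserted. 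This is precisely the combinatorial operation in the statement, and the attached coefficient $k$ matches $\wt(\sigma,\tau)=k$.

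Finally I would verify that the resulting sum is genuinely indexed by the \emph{distinct} types $\tau$ so produced, which is what makes the weight well defined. If $k_1\neq k_2$ both divide $d$, the type built from $k_1$ already differs from the one built from $k_2$ in slot $k_1$: inserting a part strictly enlarges $\sigma|_{k_1}$, while the $k_2$-construction leaves slot $k_1$ equal to $\sigma|_{k_1}$. Hence distinct divisors yield distinct types, the divisor $k$ is recovered from $\tau$ as the unique slot in which a new part appears, and the displayed sum is exactly $\sum_{\tau}\wt(\sigma,\tau)p_{\tau}^{\otimes}$. I do not anticipate a serious obstacle here; the only real care is in translating between the block/type notation and the slotwise union operation, and in confirming the injectivity of $k\mapsto\tau$ that renders $\wt(\sigma,\tau)$ unambiguous.
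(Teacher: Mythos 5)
Your proof is correct and takes essentially the same route as the paper's: the paper multiplies $p_{\sigma}^{\otimes}$ against the expansion \eqref{eq:Pdm-in-p} factor by factor, which is exactly your combination of \cref{prop:block-in-ptensor}(a) (a restatement of \eqref{eq:Pdm-in-p}) with the union rule \eqref{eq:ptens-product}. Your closing check that distinct divisors yield distinct types, so that $\wt(\sigma,\tau)$ is well defined, is a small point the paper leaves implicit but does not constitute a different approach.
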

\begin{proof}
Recall from~\eqref{eq:Pdm-in-p} that 
$P_{d^m}=\sum_{k|d} 1\otimes\cdots\otimes 1\otimes kp_{dm/k}\otimes 1\otimes
\cdots$, where $kp_{dm/k}$ occurs in position $k$.
Multiplying $p_{\sigma}^{\otimes}=p_{\sigma|_1}\otimes p_{\sigma|_2}\otimes
\cdots$ by this expression, we get
\[ p_{\sigma}^{\otimes}P_{d^m}
 =\sum_{k|d} p_{\sigma|_1}\otimes\cdots \otimes p_{\sigma|_k}kp_{dm/k}
 \otimes p_{\sigma|_{k+1}}\otimes\cdots. \]
Multiplying $p_{\sigma|_k}$ by $p_{dm/k}$ produces $p_{\tau|_k}$ where
$\tau$ is related to $\sigma$ as described in the proposition. 
The resulting term $p_{\tau}^{\otimes}$ in the expansion has coefficient $k$.
\end{proof}

Fix a type $\sigma=(1^{\sigma|_1}2^{\sigma|_2}\cdots )$ and an ordered
sequence of blocks $\delta=(d_1^{m_1},\ldots,d_s^{m_s})$.
Iteration of the rule in \cref{prop:p-times-P1} leads to
the $p^{\otimes}$-expansion of $p_{\sigma}^{\otimes}P_{\delta}$.
Starting with the tensor diagram of $\sigma$, we choose $k_i$ dividing $d_i$
(for $1\leq i\leq s$) and add a new part (weighted by $k_i$) of size
$d_im_i/k_i$ to the current partition diagram in tensor position $k_i$.
This produces the term $p^\otimes_\tau$ with the weight coefficient $\wt(\sigma, \tau) = k_i$. 
We get the required expansion by adding all such terms
generated by making all possible choices of divisors $(k_1,\ldots,k_s)$.

We now describe the answer in a different way, giving a combinatorial
formula for the net coefficient of each $p_{\tau}^{\otimes}$ in the output.
To do this, we define combinatorial structures 
(similar to TRHTs) that encode the required bookkeeping. 
We call these objects \emph{increasing constant-row $P$-tableaux} (ICRPTs).
Given $\sigma$ and $\delta$ as above, let $\tau=(1^{\tau|_1}2^{\tau|_2}\cdots)$ 
be a type such that for all $k,r$, $m_r(\tau|_k)\geq m_r(\sigma|_k)$.
Intuitively, this condition means that the tensor diagram for
$\tau$ arises from the tensor diagram for $\sigma$ by adding new parts
in various components. An ICRPT of \emph{shape} $\tau$ and \emph{extended
content} $(\sigma;\delta)$ is a filling $T$ of the cells in the tensor diagram
of $\tau$ with integers $0,1,\ldots,s$ satisfying these conditions:
\begin{itemize}
\item Each row of each $\tau|_k$ is constant (having the same value
in each cell).
\item For $1\leq i\leq s$, exactly one row in the tensor diagram of $\tau$
 contains the value $i$. If that row appears in $\tau|_k$ and has length $r$,
 then $rk=d_im_i$.
\item The cells containing $0$ in $T$ form a sub-tensor diagram that equals
 the tensor diagram of $\sigma$.
\item For each $r,k$, the values in the rows of $\tau|_k$ of length $r$
 weakly increase reading down the first column.
\end{itemize}
The \emph{weight} of the ICRPT $T$ is $\wt(T)=\prod_{k\geq 1} k^{n_k(T)}$,
where $n_k(T)$ is the number of rows in the diagram of $\tau|_k$
containing a nonzero value. Let $\ICRPT(\tau,(\sigma;\delta))$ be the
set of fillings $T$ satisfying these conditions. When $\sigma$ is empty,
we write $\ICRPT(\tau,\delta)$ for this set and call $\delta$ the
\emph{content} of $T$.

\begin{example}
 For $\sigma=(1^{3,1,1}2^{4,2}4^4)$, 
$\tau=(1^{4,3,1,1}2^{4,4,4,2}3^14^{4,2})$, and $\delta=(4^23^14^12^44^2)$,
the two objects in $\ICRPT(\tau,(\sigma;\delta))$ are shown here:
\ytableausetup{aligntableaux = top}
\[ \yt{3333,000,0,0}\otimes\yt{0000,1111,4444,00}\otimes\yt{2}
   \otimes\yt{0000,55} \qquad
   \yt{3333,000,0,0}\otimes\yt{0000,4444,5555,00}\otimes\yt{2}
   \otimes\yt{0000,11}  \]
Both objects have weight $1\cdot 2\cdot 2\cdot 3\cdot 4=48$ and thus the coefficient of $p^\otimes_\tau$ in $p^\otimes_\sigma P_\delta$ is 96.
In general, the weight of $T\in\ICRPT(\tau,(\sigma;\delta))$
depends only on $\tau$ and $\sigma$, not $\delta$.
\end{example}

\begin{theorem}\label{thm:p-times-P2}
For any type $\sigma$ and sequence $\delta=(d_1^{m_1},\ldots,d_s^{m_s})$,
\[ p_{\sigma}^{\otimes}P_{\delta}=\sum_{\tau}
 \left[\sum_{T\in\ICRPT(\tau,(\sigma;\delta))} \wt(T)\right]
  p_{\tau}^{\otimes}. \]
\end{theorem}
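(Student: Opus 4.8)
The plan is to iterate \cref{prop:p-times-P1}, exactly as \cref{thm:s-times-P2} is obtained from \cref{thm:s-times-P1}, and then repackage the resulting sum over insertion sequences as a sum over ICRPTs. First I would write $P_\delta = P_{d_1^{m_1}}P_{d_2^{m_2}}\cdots P_{d_s^{m_s}}$ and multiply $p_\sigma^\otimes$ on the right by these factors one at a time. By \cref{prop:p-times-P1}, the $i$th multiplication replaces the current type by a type obtained by choosing a divisor $k_i$ of $d_i$ and inserting a single new part of size $d_im_i/k_i$ into the partition sitting in tensor position $k_i$, contributing a factor of $k_i$ to the coefficient. After all $s$ steps, the distributive law shows that the coefficient of $p_\tau^\otimes$ in $p_\sigma^\otimes P_\delta$ equals $\sum \prod_{i=1}^s k_i$, where the sum ranges over all sequences $(k_1,\ldots,k_s)$ of divisor choices (with $k_i \mid d_i$) whose insertions turn $\sigma$ into $\tau$.

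Next I would exhibit a weight-preserving bijection between these insertion sequences and the set $\ICRPT(\tau,(\sigma;\delta))$. Given a sequence $(k_1,\ldots,k_s)$, block $d_i^{m_i}$ inserts one new row, of length $r_i = d_im_i/k_i$, into tensor position $k_i$; note $r_ik_i = d_im_i$, which is precisely the length-versus-position constraint in the definition of an ICRPT. I record this data by labeling that new row with the value $i$ and giving every original row of $\sigma$ the value $0$, so that exactly one row carries each label $i$ and the $0$-cells reproduce $\dg(\sigma)$. Within each family of rows of a fixed length $r$ in a fixed position $k$, I place the labels in weakly increasing order down the first column; the resulting arrangement is unique because the nonzero labels are pairwise distinct and the repeated label $0$ occupies the top rows. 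Conversely, from any ICRPT I recover the sequence by reading off, for each label $i$, the position $k_i$ of the (unique) row it occupies, its length then being forced to equal $d_im_i/k_i$. These two maps are mutually inverse.

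Finally I would check that the bijection preserves weights and accounts for every ICRPT condition. For the weights, the number $n_k(T)$ of nonzero-labeled rows in position $k$ equals $|\{i : k_i = k\}|$, so $\wt(T) = \prod_{k\geq 1} k^{n_k(T)} = \prod_{i=1}^s k_i$, matching the contribution of the corresponding sequence; summing over the fiber above a fixed target type $\tau$ then yields the stated formula. I expect the main obstacle to be the careful verification that the weakly increasing requirement produces exactly one ICRPT per insertion sequence: without it, the rows of equal length in a common position could be labeled in any order, overcounting each sequence by the product of the factorials of the sizes of those label families, and it is precisely this canonical ordering (together with the placement of the $0$-labels at the top of each family, which forces the $0$-cells to form $\dg(\sigma)$) that collapses the overcount and makes the correspondence a genuine bijection.
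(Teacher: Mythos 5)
Your proposal is correct and takes essentially the same approach as the paper: iterate \cref{prop:p-times-P1} across the factors of $P_{\delta}$, record each insertion sequence $(k_1,\ldots,k_s)$ as a filling of $\dg(\tau)$ with $0$s marking $\dg(\sigma)$, and use the weakly-increasing convention on equal-length rows to make this encoding bijective and weight-preserving. Your explicit justification that the canonical ordering collapses the would-be factorial overcount is a slightly fuller account of a point the paper dispatches in one sentence, but the argument is the same.
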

\begin{proof}
The entries in each ICRPT record the sequence of part
additions caused by starting at $p_{\sigma}^{\otimes}$ and successively
multiplying by $P_{d_1^{m_1}},\ldots,P_{d_s^{m_s}}$ in accordance with
\cref{prop:p-times-P1}. We start with the tensor diagram
of $\sigma$, which is filled with $0$s to indicate this is the initial
shape. For $i=1,2,\ldots,s$, the unique row containing value $i$
is the new row inserted into the tensor diagram due to the multiplication
by $P_{d_i^{m_i}}$. This row must appear in tensor position $k_i$, for some
$k_i$ dividing $d_i$, and must have length $r=d_im_i/k_i$. Each new row
is inserted in the proper position within the $k_i$th diagram so that
parts still appear in weakly decreasing order. If parts of length $r$
already exist in the $k_i$th diagram, the new part is placed just below them.
This is why values of $T$ must increase as we scan down through equal-length
parts in a given component of the tensor diagram. The net result of
all the part additions is a term $p^{\otimes}_{\tau}$.
Each new row added to the $k$th diagram multiplies this term by $k$,
so the net coefficient of this term is $\wt(T)$.
\end{proof}

\begin{corollary}\label{cor:M(P,p-tensor)}
For all types $\sigma,\tau\vdt n$, the coefficient of $p_{\tau}^{\otimes}$
in the $p^{\otimes}$-expansion of $P_{\sigma}$ is
$$\mcM(P,p^{\otimes})_{\tau,\sigma}=\sum_{T\in\ICRPT(\tau,\sigma)} \wt(T).$$
\end{corollary}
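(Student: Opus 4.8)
The plan is to obtain this corollary as the special case of \cref{thm:p-times-P2} in which the starting type is empty. First I would invoke the defining equation~\eqref{eq:trans-mat2}: by definition, $\mcM(P,p^{\otimes})_{\tau,\sigma}$ is precisely the coefficient of $p_{\tau}^{\otimes}$ in the $p^{\otimes}$-expansion of $P_{\sigma}$. So it suffices to compute that single expansion and read off the coefficient indexed by $\tau$.

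Next I would write $P_{\sigma}=p_{\varnothing}^{\otimes}\cdot P_{\sigma}$, where $p_{\varnothing}^{\otimes}=1$ is the pure tensor attached to the empty type, and then apply \cref{thm:p-times-P2} with the theorem's starting type taken to be the empty type and its content sequence $\delta$ taken to be $\sigma$ itself (regarding the type $\sigma$ as its own ordered sequence of blocks, so that $P_{\delta}=P_{\sigma}$). The theorem immediately yields
\[ P_{\sigma}=\sum_{\tau}\left[\sum_{T\in\ICRPT(\tau,(\varnothing;\sigma))}\wt(T)\right]p_{\tau}^{\otimes}, \]
and comparing with~\eqref{eq:trans-mat2} identifies the bracketed quantity as $\mcM(P,p^{\otimes})_{\tau,\sigma}$.

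The only point that requires care is checking that the $\ICRPT$ objects reduce to the promised form when the starting type is empty. By the notational convention stated just before \cref{thm:p-times-P2}, the set $\ICRPT(\tau,(\varnothing;\sigma))$ is exactly what is abbreviated $\ICRPT(\tau,\sigma)$; concretely, the defining condition that the $0$-cells form a sub-tensor diagram equal to $\dg(\varnothing)$ forces every cell of the tensor diagram of $\tau$ to carry a positive label in $\{1,\ldots,s\}$, while the remaining conditions (constant rows, exactly one row per inserted label subject to $rk=d_im_i$, and weak increase down equal-length rows) are unchanged. I do not expect a genuine obstacle here: the combinatorial content is entirely subsumed by \cref{thm:p-times-P2}, and the work is purely in confirming that this empty-type specialization of the tableau definition coincides with the abbreviated notation $\ICRPT(\tau,\sigma)$. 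Matching the coefficients of $p_{\tau}^{\otimes}$ then completes the argument.
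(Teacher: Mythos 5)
Your proposal is correct and follows essentially the same route as the paper, which states \cref{cor:M(P,p-tensor)} as an immediate specialization of \cref{thm:p-times-P2}: take the starting type to be empty (so $p_{\varnothing}^{\otimes}=1$), take the content sequence $\delta$ to be $\sigma$ itself, and use the stated convention that $\ICRPT(\tau,(\varnothing;\sigma))$ is abbreviated $\ICRPT(\tau,\sigma)$. Your additional check that the empty-type specialization of the ICRPT conditions matches the abbreviated notation, together with reading off coefficients via~\eqref{eq:trans-mat2}, is exactly the (implicit) content of the paper's deduction.
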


\begin{example}
\label{ex:ICRPT}
We find the $p^\otimes$-expansion of $p^\otimes_{2^21^3}P_{(2^2,4^1,2^2)}$. 
We compute one ICRPT step-by-step and present the rest in a figure. 
Here, $d_1^{m_1}= 2^2$, $d_2^{m_2} = 4^1$, and $d_3^{m_3} = 2^2$. 
Choose $k_ 1 = 2$, $k_2 = 2$, and $k_3 = 1$. 
First, since $k_1=2$, we place a row of length $d_1m_1/k_1 = 2$ 
with cells labeled $1$ in the second diagram.
Second, since $k_2=2$, we place another row of length $d_2m_2/k_2 = 2$ 
with cells labeled $2$ in the second diagram. 
Third, since $k_3=1$, we place a row of length $d_3m_3/k_3 = 4$ 
with cells labeled $3$ in the first diagram.
is added in the first tensor factor owing to the choice $k_3 = 1$. 
This gives the ICRPT
\[
\yt{3333,000} \otimes \yt{00,11,22} \otimes \varnothing \otimes \varnothing
\]
with weight $2\cdot 2\cdot 1 = 4$. \cref{fig:ICRPT} shows all ICRPTs
arising in~\cref{thm:p-times-P2} when $\sigma = 2^21^3$ 
and $\delta = (2^2,4^1,2^2)$.  Below each ICRPT, we show the 
tuple $(k_1,k_2,k_3)$ producing it and the weight of the ICRPT.
Combining all of this, we find the $p^{\otimes}$-expansion of
$p^\otimes_{1^32^2}P_{(2^2,4^1,2^2)}$ to be
\[ 
 1 p^\otimes_{1^{4,4,4,3}2^2} + 6 p^\otimes_{1^{4,4,3}2^{2,2}} 
+ 12p^\otimes_{1^{4,3}2^{2,2,2}} + 8p^\otimes_{1^32^{2,2,2,2}} 
+ 4 p^\otimes_{1^{4,4,3}2^24^1} + 16p^\otimes_{1^{4,3}2^{2,2}4^1}
+ 16p^{\otimes}_{1^3 2^{2,2,2} 4^1}.  \]
\end{example}
\begin{figure}[ht]
\begin{center}
\boks{0.4}
\[\begin{array}{ccc}
\yt{1111,2222,3333,000} \otimes \yt{00} \otimes \varnothing \otimes \varnothing & \yt{1111,2222,000} \otimes \yt{00,33} \otimes \varnothing \otimes \varnothing & \yt{2222,3333,000} \otimes \yt{00,11} \otimes \varnothing \otimes \varnothing \\
(1,1,1),\wt=1 & (1,1,2),\wt=2 & (2,1,1),\wt=2\\[0.4cm]

\yt{1111,3333,000} \otimes \yt{00,22} \otimes \varnothing \otimes \varnothing & \yt{1111,000} \otimes \yt{00,22,33} \otimes \varnothing \otimes \varnothing & \yt{2222,000} \otimes \yt{00,11,33} \otimes \varnothing \otimes \varnothing\\
(1,2,1),\wt=2 & (1,2,2),\wt=4 & (2,1,2),\wt=4\\[0.4cm]

\yt{3333,000} \otimes \yt{00,11,22} \otimes \varnothing \otimes \varnothing & \yt{000} \otimes \yt{00,11,22,33} \otimes \varnothing \otimes \varnothing & \yt{1111,3333,000} \otimes \yt{00} \otimes \varnothing \otimes \yt{2}\\
(2,2,1),\wt=4 & (2,2,2),\wt=8 & (1,4,1),\wt=4\\[0.4cm]

\yt{1111,000} \otimes \yt{00,33} \otimes \varnothing \otimes \yt{2} & \yt{3333,000} \otimes \yt{00,11} \otimes \varnothing \otimes \yt{2} & \yt{000} \otimes \yt{00,11,33} \otimes \varnothing \otimes \yt{2}\\
(1,4,2),\wt=8 & (2,4,1),\wt=8 & (2,4,2),\wt=16
\end{array}
\]
\caption{ICRPTs in~\cref{ex:ICRPT}.}
\label{fig:ICRPT}
\end{center}
\end{figure}

\subsection{Rule for $p_{\sigma}^{\otimes}H_{\delta}$ and $\mcM(H,p^{\otimes})$}
\label{subsec:Hp}

\begin{prop}\label{prop:p-times-H1}
For any type $\sigma$ and block $d^m$,
\[ p_{\sigma}^{\otimes}H_{d^m}
=\sum_{\tau\vdt d} \frac{1}{z_{\tau}^{\otimes}}p_{\sigma\cup\tau^m}^{\otimes}.\]
\end{prop}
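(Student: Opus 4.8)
The plan is to derive this directly from the single-block $p^{\otimes}$-expansion of $H_{d^m}$ that was already established, combined with the multiplicativity of the $p^{\otimes}$-basis recorded in~\eqref{eq:ptens-product}. The statement asserts only how a \emph{single} factor $H_{d^m}$ interacts with an arbitrary $p_{\sigma}^{\otimes}$, so no iteration or new combinatorial bookkeeping is needed; everything reduces to substituting one known identity into another.

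First I would invoke~\cref{prop:block-in-ptensor}(b) with $r=m$ to write
\[
 H_{d^m}=\sum_{\tau\vdt d}\frac{p^{\otimes}_{\tau^m}}{z^{\otimes}_{\tau}},
\]
where $\tau^m$ denotes the type obtained from $\tau$ by scaling every multiplicity by $m$, as defined in~\S\ref{subsec:frag-alg-p-tensor}. Multiplying on the left by $p_{\sigma}^{\otimes}$ and pulling the scalars $1/z^{\otimes}_{\tau}$ out of the product gives
\[
 p_{\sigma}^{\otimes}H_{d^m}
 =\sum_{\tau\vdt d}\frac{1}{z^{\otimes}_{\tau}}\,p_{\sigma}^{\otimes}p^{\otimes}_{\tau^m}.
\]
Then I would apply the product rule~\eqref{eq:ptens-product} in the two-factor case, namely $p_{\sigma}^{\otimes}p^{\otimes}_{\tau^m}=p^{\otimes}_{\sigma\cup\tau^m}$, which holds because the union operation on types satisfies $(\sigma\cup\tau^m)|_k=\sigma|_k\cup(\tau^m)|_k$ for every $k$ and because $p_{\lambda}p_{\mu}=p_{\lambda\cup\mu}$ in each tensor slot. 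Substituting this into the sum yields exactly the claimed formula
\[
 p_{\sigma}^{\otimes}H_{d^m}=\sum_{\tau\vdt d}\frac{1}{z^{\otimes}_{\tau}}\,p^{\otimes}_{\sigma\cup\tau^m}.
\]

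There is essentially no hard step here: the proposition is a formal consequence of~\cref{prop:block-in-ptensor}(b) and~\eqref{eq:ptens-product}. The only point requiring a moment's care is confirming that the merging produces $\sigma\cup\tau^m$ with the correct multiplicities, i.e.\ that scaling $\tau$ by $m$ (to account for the block $d^m$ rather than $d^1$) happens \emph{before} taking the union with $\sigma$; this is already built into the definition of $\tau^m$ and the blockwise description of $\cup$, so the composition is unambiguous.
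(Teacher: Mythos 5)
Your proof is correct and takes essentially the same route as the paper, whose entire proof reads that the formula ``follows immediately from \cref{prop:block-in-ptensor}(b), \eqref{eq:ptens-product}, and linearity.'' You have simply written out explicitly the substitution and scalar-extraction steps that the paper leaves implicit.
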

\begin{proof}
The formula follows immediately from \cref{prop:block-in-ptensor}(b),
\eqref{eq:ptens-product}, and linearity.
\end{proof}

Here is a pictorial description of the rule in \cref{prop:p-times-H1}.
To compute the $p^{\otimes}$-expansion of $p_{\sigma}^{\otimes}H_{d_m}$,
start with the tensor diagram $\dg(\sigma)$. Choose any type $\tau\vdt d$.
For all $k\geq 1$, merge the partition diagrams $\dg(\sigma|_k)$
and $\dg(m\tau|_k)$ to get a new partition diagram in position $k$.
Weight the new tensor diagram by $1/z_{\tau}^{\otimes}
=\prod_{k\geq 1} z^{-1}_{\tau|_k}$. Add the resulting terms over all
choices of the type $\tau$.

Iteration of this rule leads to the $p^{\otimes}$-expansion
of $p_{\sigma}^{\otimes}H_{\delta}$, where $\sigma$ is a type
and $\delta=(d_1^{m_1},\ldots,d_s^{m_s})$ is a sequence of blocks.
Define an \emph{increasing constant-row $H$-tableau} (ICRHT)
of shape $\tau$ and extended content $(\sigma;\delta)$ to be
a filling $T$ of the cells in the tensor diagram $\dg(\tau)$ with
integers $0,1,\ldots,s$ satisfying these conditions:
\begin{itemize}
\item Each row of each diagram $\dg(\tau|_k)$ is constant.
\item The cells containing $0$ in $T$ form a sub-tensor diagram
 equal to $\dg(\sigma)$.
\item For $1\leq i\leq s$, the cells containing $i$ in $T$ form
a sub-tensor diagram equal to $\dg(m_i\rho^{(i)})$ for some type
 $\rho^{(i)}\vdt d_i$.
\item For each $r,k$, the values in the rows of $\tau|_k$ of length $r$
 weakly increase reading down the first column.
\end{itemize}

Let $\ICRHT(\tau,(\sigma;\delta))$ be the set of all such objects.
The \emph{weight} of an object $T$ in this set is
$\prod_{i=1}^s 1/z^{\otimes}_{\rho^{(i)}}$. 
Define $\sgn^+(T)=\prod_{i=1}^s (-1)^{\ell(\rho^{(i)})}\sgn(\rho^{(i)})$
and $\sgn^-(T)=\prod_{i=1}^s (-1)^{\ell(\rho^{(i)})}$. 
The exponent of $-1$ in $\sgn^-(T)$ is the number of rows with positive 
labels in the tensor diagram of $T$. To compute $\sgn^+(T)$ from the tensor 
diagram we do the following: for every label $i>0$, find the sub-tensor 
diagram formed by cells with label $i$, and divide the length of each row 
by $m_i$. Remove one cell from each row and call the total number of remaining 
cells $c_i$. Then $\sgn^+(T) = (-1)^{c_1 + c_2 + \ldots + c_s}$. 

\begin{theorem}\label{thm:p-times-H2}
For any type $\sigma$ and sequence $\delta=(d_1^{m_1},\ldots,d_s^{m_s})$,
\[ p_{\sigma}^{\otimes}H_{\delta}=\sum_{\tau}
 \left[\sum_{T\in\ICRHT(\tau,(\sigma;\delta))} \wt(T)\right]
  p_{\tau}^{\otimes}. \]
\end{theorem}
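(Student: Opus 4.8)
The plan is to iterate the single-block rule of \cref{prop:p-times-H1}, in exactly the way \cref{thm:p-times-P2} is obtained from \cref{prop:p-times-P1}. Since $H_\delta=\prod_{i=1}^s H_{d_i^{m_i}}$, I first replace each factor by its $p^\otimes$-expansion from \cref{prop:block-in-ptensor}(b), writing $H_{d_i^{m_i}}=\sum_{\rho^{(i)}\vdt d_i}(z^\otimes_{\rho^{(i)}})^{-1}\,p^\otimes_{(\rho^{(i)})^{m_i}}$. Multiplying these out, distributing, and then collapsing each resulting product $p^\otimes_\sigma\prod_i p^\otimes_{(\rho^{(i)})^{m_i}}$ into a single basis element by \eqref{eq:ptens-product} gives
\[
 p^\otimes_\sigma H_\delta
 =\sum_{\rho^{(1)}\vdt d_1}\cdots\sum_{\rho^{(s)}\vdt d_s}
   \frac{1}{\prod_{i=1}^s z^\otimes_{\rho^{(i)}}}\,
   p^\otimes_{\sigma\cup(\rho^{(1)})^{m_1}\cup\cdots\cup(\rho^{(s)})^{m_s}}.
\]
Grouping the terms according to the common output type $\tau=\sigma\cup\bigcup_i(\rho^{(i)})^{m_i}$ turns the coefficient of $p^\otimes_\tau$ into the sum of the weights $\prod_i (z^\otimes_{\rho^{(i)}})^{-1}$ over all tuples $(\rho^{(1)},\ldots,\rho^{(s)})$ that build $\tau$ from $\sigma$ in this way.

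The heart of the argument is then a weight-preserving bijection between such tuples and the objects in $\ICRHT(\tau,(\sigma;\delta))$. Given a tuple, I label the cells of $\dg(\tau)$ by giving label $0$ to the cells of $\dg(\sigma)$ and label $i$ to the cells contributed by $(\rho^{(i)})^{m_i}$; within each tensor position $k$ and each fixed row-length $r$, I sort the rows of $\tau|_k$ of length $r$ so that their labels weakly increase going down the first column. Because the multiset of labels attached to the length-$r$ rows in position $k$ is fixed by the tuple, there is exactly one such arrangement, and it is precisely the fourth defining condition of an ICRHT. Conversely, from any $T\in\ICRHT(\tau,(\sigma;\delta))$ I recover the tuple by letting $\rho^{(i)}\vdt d_i$ be the type whose scaled tensor diagram $\dg(m_i\rho^{(i)})$ is cut out by the cells labeled $i$; the defining conditions guarantee $\rho^{(i)}\vdt d_i$ and that the union of all labeled pieces is $\tau$. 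These two maps are mutually inverse, and since $\wt(T)=\prod_i (z^\otimes_{\rho^{(i)}})^{-1}$ by definition, the bijection is weight-preserving. Substituting it into the grouped sum yields the stated formula.

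The step I expect to be the main obstacle is verifying that the increasing condition genuinely canonicalizes the placement, i.e., that the map from tuples to fillings is both well defined and injective. The delicate point is that the union operation on types merges equal-length rows from different factors without recording their origin, so a priori many labelings of a fixed $\tau$ could encode the same data; the weak-increase requirement along the first column of each block of equal-length rows is exactly what removes this ambiguity, so that each admissible tuple corresponds to a single ICRHT and vice versa. Once this is confirmed, matching $\wt(T)$ to $\prod_i(z^\otimes_{\rho^{(i)}})^{-1}$ and checking that every filling satisfying the four bullet conditions arises from a legitimate tuple $(\rho^{(1)},\ldots,\rho^{(s)})$ with $\rho^{(i)}\vdt d_i$ are routine.
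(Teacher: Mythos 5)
Your proposal is correct and follows essentially the same route as the paper: both expand each factor $H_{d_i^{m_i}}$ via \cref{prop:block-in-ptensor}(b) (equivalently, iterate \cref{prop:p-times-H1}), collapse products using \eqref{eq:ptens-product}, and identify the resulting weighted tuples $(\rho^{(1)},\ldots,\rho^{(s)})$ with objects of $\ICRHT(\tau,(\sigma;\delta))$, with the weakly-increasing condition serving to canonicalize the labeling. Your version merely makes explicit the bijection (well-definedness and injectivity of the labeling) that the paper's proof treats implicitly via its sequential row-insertion description.
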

\begin{proof}
Start with $p_{\sigma}^{\otimes}$, modeled by the tensor diagram
$\dg(\sigma)$ with all cells containing $0$. For $i=1,2,\ldots,s$,
use \cref{prop:p-times-H1} to modify the current diagram to enact
multiplication by the next factor $H_{d_i^{m_i}}$. Do this by choosing
a type $\rho^{(i)}\vdt d_i$ and adding new parts given by $m_i\rho^{(i)}|_k$
to the $k$th diagram for all $k\geq 1$. Put $i$ in all cells in these new 
parts to record which factor created them. As before, new parts of 
size $r$ are placed immediately below existing parts of size $r$ in
each diagram. This explains the weakly increasing condition in the
definition of ICRHTs. The factor $\wt(T)$ accounts for all the weights
produced by each insertion step. Making these choices in all possible ways 
leads to the weighted set $\ICRHT(\tau,(\sigma;\delta))$ 
appearing in the theorem statement.
\end{proof}

\begin{corollary}\label{cor:M(H,p-tensor)}
For all types $\sigma,\tau\vdt n$, the coefficient of $p_{\tau}^{\otimes}$
in the $p^{\otimes}$-expansion of $H_{\sigma}$ is
$$\mcM(H,p^{\otimes})_{\tau,\sigma}=\sum_{T\in\ICRHT(\tau,\sigma)} \wt(T).$$
\end{corollary}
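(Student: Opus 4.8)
The plan is to specialize \cref{thm:p-times-H2} to the case where the left-hand factor $p_\sigma^\otimes$ is trivial, exactly mirroring the way \cref{cor:M(P,p-tensor)} is obtained from \cref{thm:p-times-P2}. First I would recall that the transition matrix $\mcM(H,p^\otimes)$ is \emph{defined} by the expansion $H_\sigma=\sum_{\tau\vdt n}\mcM(H,p^\otimes)_{\tau,\sigma}\,p_\tau^\otimes$, so it suffices to compute the $p^\otimes$-expansion of $H_\sigma$ directly and read off the coefficient of $p_\tau^\otimes$. The key observation is that the empty type $\vn$ satisfies $p_\vn^\otimes=1$, since it is the empty tensor product; hence $H_\sigma=p_\vn^\otimes H_\sigma$, and we may regard $H_\sigma$ as the output of multiplying a starting pure-tensor power-sum by a single block sequence.

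Next I would apply \cref{thm:p-times-H2} with the starting type taken to be $\vn$ and with the block sequence $\delta$ equal to $\sigma$ itself (a type is by definition a weakly decreasing sequence of blocks, and $H_\delta=H_\sigma$ for this choice). This yields
\[
 H_\sigma=p_\vn^\otimes H_\sigma=\sum_{\tau}
 \left[\sum_{T\in\ICRHT(\tau,(\vn;\sigma))}\wt(T)\right]p_\tau^\otimes.
\]
The only point to verify is that the extended-content set $\ICRHT(\tau,(\vn;\sigma))$ is the same as the content-only set $\ICRHT(\tau,\sigma)$ used in the corollary. This is immediate from the defining conditions: taking the starting type to be $\vn$ forces the sub-tensor diagram of cells labeled $0$ to equal $\dg(\vn)$, so there are simply no $0$-cells, and the shorthand $\ICRHT(\tau,\sigma)$ (with $\sigma$ playing the role of the content $\delta$) denotes precisely this situation, in exact analogy with the abbreviation $\ICRPT(\tau,\delta)$ introduced for the empty-starting-type case.

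Finally, comparing the coefficient of $p_\tau^\otimes$ on both sides gives $\mcM(H,p^\otimes)_{\tau,\sigma}=\sum_{T\in\ICRHT(\tau,\sigma)}\wt(T)$, as claimed. I expect no genuine obstacle: the argument is purely a matter of unwinding definitions and parallels the deductions of \cref{cor:M(P,p-tensor)} from \cref{thm:p-times-P2} and of \cref{cor:M(P,s-tensor)} from \cref{thm:s-times-P2}. The one detail deserving a sentence of care is that the weight $\wt(T)=\prod_{i=1}^s 1/z^\otimes_{\rho^{(i)}}$ depends only on the types $\rho^{(i)}$ recording the successive insertions and not on the presence of $0$-cells, so it is transported unchanged from the extended-content setting to the content-only setting.
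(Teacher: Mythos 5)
Your proposal is correct and follows exactly the route the paper intends: the corollary is stated as an immediate specialization of \cref{thm:p-times-H2} obtained by taking the starting type to be the empty type (so $p_{\vn}^{\otimes}=1$) and the block sequence $\delta$ to be the type $\sigma$ itself, with $\ICRHT(\tau,(\vn;\sigma))=\ICRHT(\tau,\sigma)$ by the same abbreviation convention used for ICRPTs. Your added remarks (that a type is itself a valid ordered sequence of blocks, and that the weight is unaffected by the absence of $0$-cells) are exactly the right details to check and match the paper's implicit reasoning.
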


\begin{example}\label{ex:H-in-p} 
In this example, we compute the coefficient 
of $p^\otimes_\tau$ in the $p^{\otimes}$-expansion of $H_\sigma$, where
$\tau =(3^{2,1}2^{2,2,1}1^4)$ and $\sigma =(9^16^14^12^2)$. 
We construct the following six objects, 
each labeled by the tuple of types 
$(\rho^{(1)}\vdt 9,\rho^{(2)}\vdt 6,
  \rho^{(3)}\vdt 4,\rho^{(4)}\vdt 2)$ that produced it.
\begin{center}
$T_1 = \yt{4444}\otimes \yt{22,33,2}\otimes \yt{11,1}$ \qquad $T_2 = \yt{3333}\otimes \yt{22,44,2}\otimes \yt{11,1}$\\[0.1cm]
$((3^{2,1}), (2^{2,1}),(2^2),(1^2))$ 
\qquad $((3^{2,1}),(2^{2,1}),(1^4),(2^1))$\\[0.3cm]
$T_3 = \yt{4444}\otimes \yt{11,33,1}\otimes \yt{22,1}$ \qquad $T_4= \yt{3333}\otimes \yt{11,44,1}\otimes \yt{22,1}$\\[0.1cm]
$((3^12^{2,1}),(3^2),(2^2),(1^2))$ \qquad 
$((3^12^{2,1}),(3^2),(1^4),(2^1))$\\[0.3cm]
$T_5 = \yt{2222}\otimes \yt{33,44,2}\otimes \yt{11,1}$ \qquad $T_6= \yt{1111}\otimes \yt{33,44,1}\otimes \yt{22,1}$\\[0.1cm]
$((3^{2,1}),(2^11^4),(2^2),(2^1))$ \qquad 
$((3^12^11^4),(3^2),(2^2),(2^1))$
\end{center} 
The weight of the first ICRHT is 
$\wt(T_1)=z_{(2,1)}^{-1}z_{(2,1)}^{-1}z_{(2)}^{-1}z_{(2)}^{-1}
= \left(\frac{1}{2}\right)^4 = \frac{1}{16}$. Similarly, all six ICRHTs
shown here have weight $\frac{1}{16}$. So 
$\mcM(H,p^\otimes)_{\tau,\sigma} = \frac{3}{8}$.
\end{example}

\begin{remark} 
\boks{0.23}
\ytableausetup{aligntableaux = center}
In general, not all objects in $\ICRHT(\tau, \sigma)$ have the same weight. 
For example, let $\tau=(1^{1,1,1}2^{1,1,1})$ and $\sigma =(5^14^1)$. 
Two objects in $\ICRHT(\tau,\sigma)$ are
$T' = {\tiny \yt{1,1,1}\otimes \yt{1,2,2}}$ 
and $T'' ={\tiny \yt{1,2,2}\otimes \yt{1,1,2}}$,
arising from type choices $((1^{1,1,1}2^1),(2^{1,1}))$ for $T'$
                     and  $((1^12^{1,1}),(1^{1,1}2^{1}))$ for $T''$.
We compute $\wt(T')=z_{(1^3)}^{-1}z_{(1)}^{-1}z_{(1^2)}^{-1}=1/12$
       and $\wt(T'')=z_{(1)}^{-1}z_{(1^2)}^{-1}z_{(1^2)}^{-1}z_{(1)}^{-1}=1/4$.
\boks{0.4}
\end{remark}

\subsection{Rule for $p_{\sigma}^{\otimes}E^+_{\delta}$,
$p_{\sigma}^{\otimes}E_{\delta}$, $\mcM(E^+,p^{\otimes})$,
and $\mcM(E,p^{\otimes})$}
\label{subsec:Ep}

The next three results follow immediately by adapting the proofs
in the previous subsection, keeping in mind \cref{prop:block-in-ptensor}(c)
and (d).

\begin{prop}\label{prop:p-times-E1}
For any type $\sigma$ and block $d^m$,
\[ p_{\sigma}^{\otimes}E^+_{d^m}
=\sum_{\tau\vdt d} \frac{(-1)^{\ell(\tau)}\sgn(\tau)}{z_{\tau}^{\otimes}}
p_{\sigma\cup\tau^m}^{\otimes}
\quad\mbox{ and }\quad
 p_{\sigma}^{\otimes}E_{d^m} =
\sum_{\tau\vdt d} \frac{(-1)^{\ell(\tau)}}{z_{\tau}^{\otimes}}
p_{\sigma\cup\tau^m}^{\otimes}.\]
\end{prop}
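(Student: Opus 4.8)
The plan is to deduce both expansions directly from the single-block formulas in \cref{prop:block-in-ptensor}(c) and (d), mirroring exactly how \cref{prop:p-times-H1} was obtained from \cref{prop:block-in-ptensor}(b). The only ingredients needed are the block-level $p^\otimes$-expansions of $E^+_{d^m}$ and $E_{d^m}$, the product rule \eqref{eq:ptens-product} for the $p^\otimes$-basis, and linearity of the sum. No new combinatorics enters at this stage; the content is purely algebraic substitution.

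First I would treat the $E^+$ case. Setting $r=m$ in \cref{prop:block-in-ptensor}(c) gives
\[ E^+_{d^m}=\sum_{\tau\vdt d}(-1)^{\ell(\tau)}\sgn(\tau)\frac{p^\otimes_{\tau^m}}{z_\tau^\otimes}. \]
Multiplying on the left by $p_\sigma^\otimes$ and pulling the rational coefficients through the (finite) sum by linearity reduces the task to simplifying each product $p_\sigma^\otimes p^\otimes_{\tau^m}$. Specializing \eqref{eq:ptens-product} to two factors yields $p_\sigma^\otimes p^\otimes_{\tau^m}=p^\otimes_{\sigma\cup\tau^m}$, where $\cup$ denotes the union operation on types introduced at the start of \cref{subsec:frag-alg-p-tensor}. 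Substituting this back in produces exactly the claimed formula for $p_\sigma^\otimes E^+_{d^m}$.

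The $E$ case is identical except that the coefficient supplied by \cref{prop:block-in-ptensor}(d) is $(-1)^{\ell(\tau)}$ rather than $(-1)^{\ell(\tau)}\sgn(\tau)$; dropping the $\sgn(\tau)$ factor from the computation above immediately gives the second formula.

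Since every step is a direct substitution, there is no genuine obstacle here. The only point demanding care is the bookkeeping of index conventions: confirming that the exponent $r$ in \cref{prop:block-in-ptensor} is being specialized to the multiplicity $m$ of the current block, that $\tau^m$ denotes the multiplicity-scaling operation on a type $\tau\vdt d$, and that the union $\sigma\cup\tau^m$ is formed component-wise via $(\sigma\cup\tau^m)|_k=\sigma|_k\cup(\tau^m)|_k$. With these identifications fixed, both expansions follow word-for-word from the $H$ argument, which is why the proposition can reasonably be stated to follow immediately.
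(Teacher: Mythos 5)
Your proof is correct and matches the paper's own argument, which likewise derives both formulas immediately from \cref{prop:block-in-ptensor}(c) and (d) together with \eqref{eq:ptens-product} and linearity, exactly as was done for $H_{d^m}$ in \cref{prop:p-times-H1}. Your careful note on the index conventions ($r=m$, the scaling $\tau^m$, and the componentwise union) is a fair elaboration of what the paper leaves implicit.
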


\begin{theorem}\label{thm:p-times-E2}
For any type $\sigma$ and sequence $\delta=(d_1^{m_1},\ldots,d_s^{m_s})$,
\[ p_{\sigma}^{\otimes}E^+_{\delta}=\sum_{\tau}
 \left[\sum_{T\in\ICRHT(\tau,(\sigma;\delta))} \sgn^+(T)\wt(T)\right]
  p_{\tau}^{\otimes}; \]
\[ p_{\sigma}^{\otimes}E_{\delta}=\sum_{\tau}
 \left[\sum_{T\in\ICRHT(\tau,(\sigma;\delta))} \sgn^-(T)\wt(T)\right]
  p_{\tau}^{\otimes}. \]
\end{theorem}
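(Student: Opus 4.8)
The plan is to mirror the proof of \cref{thm:p-times-H2} essentially verbatim, replacing the single-block identity \cref{prop:p-times-H1} by its signed counterpart \cref{prop:p-times-E1} and carrying the extra signs through the iteration. Concretely, I would begin with $p_\sigma^\otimes$ modeled by the tensor diagram $\dg(\sigma)$ with every cell labeled $0$. For $i=1,2,\ldots,s$, I would multiply the current expression by the next factor $E^+_{d_i^{m_i}}$ (respectively $E_{d_i^{m_i}}$). By \cref{prop:p-times-E1}, each such step amounts to choosing a type $\rho^{(i)}\vdt d_i$, merging the parts of $m_i\rho^{(i)}|_k$ into the $k$th partition diagram for every $k$, labeling all newly inserted cells by $i$, and weighting by $1/z^{\otimes}_{\rho^{(i)}}$ together with the sign $(-1)^{\ell(\rho^{(i)})}\sgn(\rho^{(i)})$ for $E^+$ or $(-1)^{\ell(\rho^{(i)})}$ for $E$. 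Exactly as in \cref{thm:p-times-H2}, each new part of size $r$ is placed directly below the existing parts of size $r$, which forces the weakly increasing condition on labels and makes the resulting labeled diagram a valid element of $\ICRHT(\tau,(\sigma;\delta))$.

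Next I would verify that the weight and the sign accumulated over the $s$ insertion steps depend only on the final labeled tableau $T$. The weight bookkeeping is identical to the $H$ case: the product $\prod_{i=1}^s 1/z^{\otimes}_{\rho^{(i)}}$ is exactly $\wt(T)$. For the signs, I would use that the per-step factors $(-1)^{\ell(\rho^{(i)})}\sgn(\rho^{(i)})$ (resp. $(-1)^{\ell(\rho^{(i)})}$) are multiplicative across the independent steps, so the total accumulated sign is $\prod_{i=1}^s (-1)^{\ell(\rho^{(i)})}\sgn(\rho^{(i)})$ (resp. $\prod_{i=1}^s (-1)^{\ell(\rho^{(i)})}$), which are precisely $\sgn^+(T)$ and $\sgn^-(T)$ as defined in \cref{subsec:Hp}. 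Since the cells of $T$ labeled $i$ form $\dg(m_i\rho^{(i)})$, each $\rho^{(i)}$ is recoverable from $T$ by dividing the relevant row lengths by $m_i$, so $\wt(T)$, $\sgn^+(T)$, and $\sgn^-(T)$ are all well-defined functions of $T$ alone, exactly as in the $H$ case. Summing over all choices $(\rho^{(1)},\ldots,\rho^{(s)})$ and grouping the output terms $p_\tau^\otimes$ by the type $\tau$ they produce then yields the two stated formulas.

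The one point requiring genuine care is the sign bookkeeping. I would want to confirm that the merging of diagrams introduces no further sign: this holds because by \eqref{eq:ptens-product} the product of power-sum tensors $p^{\otimes}_{\sigma\cup\tau^m}=p^{\otimes}_{\sigma}p^{\otimes}_{\tau^m}$ is sign-free, so the only sign contributed at step $i$ is that attached to the chosen $\rho^{(i)}$ in \cref{prop:p-times-E1}. I would also double-check that the alternative diagrammatic recipe for $\sgn^+(T)$ from \cref{subsec:Hp}—divide each label-$i$ row length by $m_i$, delete one cell per row, and count the survivors $c_i$—indeed reproduces $\prod_i(-1)^{\ell(\rho^{(i)})}\sgn(\rho^{(i)})$, since $c_i=\sum_k\area(\rho^{(i)}|_k)-\ell(\rho^{(i)})\equiv \ell(\rho^{(i)})+\sum_k\area(\rho^{(i)}|_k)\pmod 2$ and the exponent of $\sgn(\rho^{(i)})$ is $\sum_k\area(\rho^{(i)}|_k)$. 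Everything else is a direct transcription of the $H$ argument, so this modest sign verification is where I expect the only real obstacle to lie.
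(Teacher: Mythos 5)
Your proposal is correct and follows exactly the paper's intended route: the paper proves \cref{thm:p-times-E2} by the one-line observation that the arguments of \cref{subsec:Hp} (in particular the proof of \cref{thm:p-times-H2}) carry over verbatim once \cref{prop:p-times-H1} is replaced by its signed analogue \cref{prop:p-times-E1}, i.e.\ \cref{prop:block-in-ptensor}(c),(d). Your explicit sign bookkeeping, including the check that $c_i=\sum_k\area(\rho^{(i)}|_k)-\ell(\rho^{(i)})$ has the same parity as the exponent in $(-1)^{\ell(\rho^{(i)})}\sgn(\rho^{(i)})$, is a correct elaboration of what the paper leaves implicit.
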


\begin{corollary}\label{cor:M(E,p-tensor)}
For all types $\sigma,\tau\vdt n$, the coefficient of $p_{\tau}^{\otimes}$
in the $p^{\otimes}$-expansion of $E^+_{\sigma}$ is
$$\mcM(E^+,p^{\otimes})_{\tau,\sigma}=\sum_{T\in\ICRHT(\tau,\sigma)} 
 \sgn^+(T)\wt(T).$$
The coefficient of $p_{\tau}^{\otimes}$
in the $p^{\otimes}$-expansion of $E_{\sigma}$ is
$$\mcM(E,p^{\otimes})_{\tau,\sigma}=\sum_{T\in\ICRHT(\tau,\sigma)} 
 \sgn^-(T)\wt(T).$$
\end{corollary}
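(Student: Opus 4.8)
The plan is to specialize \cref{thm:p-times-E2} to the empty base type. First I would set the type $\sigma$ appearing in \cref{thm:p-times-E2} equal to the empty type $\vn$; since $p_{\vn}^{\otimes}=1$, the products $p_{\vn}^{\otimes}E^+_{\delta}$ and $p_{\vn}^{\otimes}E_{\delta}$ collapse to $E^+_{\delta}$ and $E_{\delta}$ respectively. To match the notation of the corollary I would then rename the content sequence $\delta$ as $\sigma$, so that $\sigma$ now plays the role of the type whose $p^{\otimes}$-expansion we seek rather than a base type being multiplied. This role-swap of the symbol $\sigma$ is the only point requiring care.

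Next I would check that the combinatorial index set simplifies as expected. With empty base type, the defining condition of an ICRHT that the $0$-labeled cells form the sub-tensor diagram $\dg(\sigma)$ forces there to be no $0$-cells at all, so every cell of $\dg(\tau)$ carries a positive label. This is exactly the specialization recorded by the convention (used for ICRPTs, and applied identically here) under which one writes $\ICRHT(\tau,\sigma)$ for $\ICRHT(\tau,(\vn;\sigma))$ and calls $\sigma$ the content. Hence the inner sums produced by \cref{thm:p-times-E2} range over $\ICRHT(\tau,\sigma)$, and the weight factors $\sgn^+(T)\wt(T)$ and $\sgn^-(T)\wt(T)$ are carried over unchanged.

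Finally I would extract coefficients. Comparing the resulting identity $E^+_{\sigma}=\sum_{\tau}\bigl[\sum_{T\in\ICRHT(\tau,\sigma)}\sgn^+(T)\wt(T)\bigr]p_{\tau}^{\otimes}$, and its analogue for $E_{\sigma}$ with $\sgn^-$ in place of $\sgn^+$, against the defining relation~\eqref{eq:trans-mat2} for the transition matrix yields both claimed formulas at once. I expect no genuine obstacle here: the entire combinatorial content already resides in \cref{thm:p-times-E2}, and this corollary follows by the same routine specialization by which \cref{cor:M(P,p-tensor)} and \cref{cor:M(H,p-tensor)} are deduced from their parent theorems.
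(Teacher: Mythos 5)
Your proposal is correct and follows exactly the route the paper intends: the corollary is the specialization of \cref{thm:p-times-E2} to the empty base type (with the content sequence taken to be the type $\sigma$), mirroring how \cref{cor:M(P,p-tensor)} and \cref{cor:M(H,p-tensor)} are obtained from their parent theorems, and your attention to the notational convention $\ICRHT(\tau,\sigma)=\ICRHT(\tau,(\vn;\sigma))$ and to extracting coefficients via~\eqref{eq:trans-mat2} covers all the points that need checking.
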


\begin{example}
We continue with \cref{ex:H-in-p} where 
$\tau =(3^{2,1}2^{2,2,1}1^4)$ and $\sigma =(9^16^14^12^2)$. 
For $i$ between $1$ and $6$, $\sgn^-(T_i) = (-1)^6$ since there
are 6 rows in $\dg(\tau)$, all filled with positive labels.
So the coefficient of $p^\otimes_\tau$ in the $p^{\otimes}$-expansion
of $E_\sigma$ is $\frac{3}{8}$. 
On the other hand, $\sgn^+(T_1) = \sgn^+(T_3)= (-1)^{10-6}=1$,
while $\sgn^+(T_2)=\sgn^+(T_4)=\sgn^+(T_5)=\sgn^+(T_6)=(-1)^{11-6}=-1$.
So the coefficient of $p^\otimes_\tau$ in the $p^{\otimes}$-expansion
of $E^+_\sigma$ is $-\frac{1}{8}$.
\end{example}

\section{Expansions in the $m^{\otimes}$ Basis}
\label{sec:m-expand}

\subsection{Rule for $m_{\sigma}^{\otimes}P_{\delta}$ and $\mcM(P,m^{\otimes})$}
\label{subsec:Pm}

Before stating our combinatorial rule for the $m^{\otimes}$-expansion of
$m_{\sigma}P_{\delta}$, we describe an analogous rule (cf.~\cite{eg-rem})
for the monomial expansion of $m_{\mu}p_{\alpha}$, where 
$\mu=(\mu_1,\ldots,\mu_{\ell})$ is an integer partition and 
$\alpha=(\alpha_1,\ldots,\alpha_s)$ is a sequence of positive integers.
We create $s$ horizontal bricks, namely, 
a brick containing $\alpha_1$ boxes labeled $1$,
a brick containing $\alpha_2$ boxes labeled $2$, $\ldots$, and
a brick containing $\alpha_s$ boxes labeled $s$.
We also create $\ell$ horizontal bricks of lengths
$\mu_1,\ldots,\mu_{\ell}$ with all boxes in these bricks labeled $0$.
For a given partition $\lambda$,
draw the diagram of $\lambda$ and place these bricks in
this diagram so that every box in the diagram is covered by exactly one
brick, and the brick labels strictly increase reading left
to right in each row. (Strict increase means that a row can contain
at most one brick labeled $0$.) Two bricks of the same length, with boxes 
labeled $0$, are considered indistinguishable. Call such a configuration
a \emph{$p$-brick tabloid} of \emph{shape} $\lambda$ and \emph{extended content}
$(\mu;\alpha)$.

\begin{prop}\label{prop:mp-in-m}
For any partitions $\lambda$, $\mu$ and list of positive integers $\alpha$,
the coefficient of $m_{\lambda}$ in $m_{\mu}p_{\alpha}$ is the number
of $p$-brick tabloids of shape $\lambda$ and extended content $(\mu;\alpha)$. 
\end{prop}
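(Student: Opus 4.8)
The plan is to reduce the claim to a single monomial-coefficient computation and then exhibit an explicit bijection. Since $m_{\mu}p_{\alpha}$ is a symmetric function, the coefficient of $m_{\lambda}$ in its monomial expansion equals the coefficient of the one monomial $x^{\lambda}=x_1^{\lambda_1}x_2^{\lambda_2}\cdots$ (the exponents being the parts of $\lambda$ in weakly decreasing order): the monomial $x^{\lambda}$ occurs with coefficient $1$ in $m_{\lambda}$ and occurs in no $m_{\nu}$ with $\nu\neq\lambda$, since a monomial with weakly decreasing exponents lies only in the monomial symmetric function indexed by those exponents. So it suffices to compute the coefficient of $x^{\lambda}$ in $m_{\mu}p_{\alpha}$.

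Next I would expand the product directly. Writing $m_{\mu}=\sum_{\gamma}x^{\gamma}$, where $\gamma$ ranges over the distinct rearrangements of the parts of $\mu$ padded with zeros, and $p_{\alpha_i}=\sum_{j\geq 1}x_j^{\alpha_i}$, the monomial $x^{\lambda}$ arises exactly from those tuples $(\gamma,j_1,\ldots,j_s)$ for which $\gamma_r+\sum_{i:\,j_i=r}\alpha_i=\lambda_r$ for every row index $r\geq 1$. Hence the coefficient of $x^{\lambda}$ equals the number of such tuples, and the problem becomes a bijection between these tuples and the $p$-brick tabloids of shape $\lambda$ and extended content $(\mu;\alpha)$. (The degree condition $|\lambda|=|\mu|+|\alpha|$ is automatic on both sides: otherwise there are no tuples and no tabloids.)

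Finally I would make the bijection explicit. From a tuple, place in row $r$ a single $0$-labeled brick of length $\gamma_r$ (omitted when $\gamma_r=0$) together with the $\alpha$-brick labeled $i$ for each $i$ with $j_i=r$, and order the bricks in that row by increasing label. The displayed exponent equation guarantees these bricks tile row $r$ (of length $\lambda_r$) exactly, and strict increase of labels makes the left-to-right order unique, so we obtain a well-defined $p$-brick tabloid; conversely, reading off the length of the unique $0$-brick in each row recovers $\gamma$, and the row occupied by the brick labeled $i$ recovers $j_i$. The only point requiring genuine care is reconciling two bookkeeping conventions: equal-length $0$-bricks are declared indistinguishable, and $m_{\mu}$ sums over \emph{distinct} rearrangements, so two assignments of the parts of $\mu$ to rows that differ only by permuting equal parts yield the same $\gamma$ and the same tabloid, and both sides count them once. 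Likewise, ``at most one $0$-brick per row'' matches the fact that $\gamma$ assigns at most one part of $\mu$ to each row, which is forced because all $\mu$-bricks share the label $0$ and labels strictly increase. I expect this matching of indistinguishability with distinct rearrangements to be the main (though modest) obstacle; the rest is a direct unwinding of the definitions.
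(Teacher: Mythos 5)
Your proposal is correct and follows essentially the same route as the paper's proof: reduce to extracting the coefficient of the monomial $\xx^{\lambda}$, expand $m_{\mu}(\xx)p_{\alpha_1}(\xx)\cdots p_{\alpha_s}(\xx)$ by the distributive law, and record each choice of monomials as a $p$-brick tabloid via the placement of the $0$-bricks and the single brick labeled $i$ for each factor $p_{\alpha_i}$. Your explicit treatment of the tuple $(\gamma,j_1,\ldots,j_s)$ and of the indistinguishability of equal-length $0$-bricks is a slightly more careful write-up of the same bijection the paper describes.
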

\begin{proof}
The coefficient of $m_{\lambda}$ in the $m$-expansion of 
$m_{\mu}p_{\alpha}$ equals the coefficient of the particular monomial
$\xx^{\lambda}=x_1^{\lambda_1}x_2^{\lambda_2}\cdots x_k^{\lambda_k}\cdots$ 
in the polynomial $m_{\mu}(\xx)p_{\alpha}(\xx)$. The $p$-brick tabloids 
described in the proposition record all the ways the monomial $\xx^{\lambda}$ 
can be 
generated by choosing particular monomials from each factor $m_{\mu}(\xx)$, 
$p_{\alpha_1}(\xx)$, $\ldots$, $p_{\alpha_s}(\xx)$ and multiplying those 
monomials together in accordance with the distributive law.

In more detail, the placement of all the bricks labeled $0$ in distinct rows
$i_1,i_2,\ldots,i_{\ell}$ records a monomial $x_{i_1}^{\mu_1}x_{i_2}^{\mu_2}
\cdots x_{i_{\ell}}^{\mu_{\ell}}$ coming from $m_{\mu}(\xx)$.
The placement of the brick of length $\alpha_1$ labeled $1$ in some row
$j_1$ records a monomial $x_{j_1}^{\alpha_1}$ coming from $p_{\alpha_1}(\xx)$.
The placement of the brick of length $\alpha_2$ labeled $2$ in some row
$j_2$ records a monomial $x_{j_2}^{\alpha_2}$ coming from $p_{\alpha_2}(\xx)$.
And so on. Since the $p$-brick tabloid covers each cell in row $k$ of the
diagram of $\lambda$ with exactly one brick, we see that the power of 
$x_k$ in the generated monomial is $\lambda_k$ for all $k$, as needed.
Brick labels increase from left to right in each row since we 
place the bricks in the diagram in the same order that the choices
of monomials are made from $m_{\mu}(\xx)$ (bricks labeled $0$), 
$p_{\alpha_1}(\xx)$ (brick labeled $1$), $\ldots$,
$p_{\alpha_s}(\xx)$ (brick labeled $s$).
\end{proof}

\begin{example}
Let $\mu = (3,3,1)$ and $\alpha = (2,4,2)$. We find the coefficient of 
$m_{(5,4,3,3)}$ in $m_{\mu}p_{\alpha}$ to be 6 by counting 
the following $p$-brick tabloids.
\begin{center}
\begin{tikzpicture}[scale = 0.4]
\draw (0,0) rectangle (3,-1);
\draw (0.5,-0.5) node {0};
\draw (1.5,-0.5) node {0};
\draw (2.5,-0.5) node {0};
\draw (3,0) rectangle (5,-1);
\draw (3.5,-0.5) node {1};
\draw (4.5,-0.5) node {1};
\draw (0,-1) rectangle (4,-2);
\draw (0.5,-1.5) node {2};
\draw (1.5,-1.5) node {2};
\draw (2.5,-1.5) node {2};
\draw (3.5,-1.5) node {2};
\draw (0,-2) rectangle (3,-3);
\draw (0.5,-2.5) node {0};
\draw (1.5,-2.5) node {0};
\draw (2.5,-2.5) node {0};
\draw (0,-3) rectangle (1,-4);
\draw (0.5,-3.5) node {0};
\draw (1,-3) rectangle (3,-4);
\draw (1.5,-3.5) node {3};
\draw (2.5,-3.5) node {3};
\end{tikzpicture}
\space{}\quad 
\begin{tikzpicture}[scale = 0.4]
\draw (0,0) rectangle (3,-1);
\draw (0.5,-0.5) node {0};
\draw (1.5,-0.5) node {0};
\draw (2.5,-0.5) node {0};
\draw (3,0) rectangle (5,-1);
\draw (3.5,-0.5) node {1};
\draw (4.5,-0.5) node {1};
\draw (0,-1) rectangle (4,-2);
\draw (0.5,-1.5) node {2};
\draw (1.5,-1.5) node {2};
\draw (2.5,-1.5) node {2};
\draw (3.5,-1.5) node {2};
\draw (0,-2) rectangle (1,-3);
\draw (0.5,-2.5) node {0};
\draw (1,-2) rectangle (3,-3);
\draw (1.5,-2.5) node {3};
\draw (2.5,-2.5) node {3};
\draw (0,-3) rectangle (3,-4);
\draw (0.5,-3.5) node {0};
\draw (1.5,-3.5) node {0};
\draw (2.5,-3.5) node {0};
\end{tikzpicture}
\space{}\quad 
\begin{tikzpicture}[scale = 0.4]
\draw (0,0) rectangle (3,-1);
\draw (0.5,-0.5) node {0};
\draw (1.5,-0.5) node {0};
\draw (2.5,-0.5) node {0};
\draw (3,0) rectangle (5,-1);
\draw (3.5,-0.5) node {3};
\draw (4.5,-0.5) node {3};
\draw (0,-1) rectangle (4,-2);
\draw (0.5,-1.5) node {2};
\draw (1.5,-1.5) node {2};
\draw (2.5,-1.5) node {2};
\draw (3.5,-1.5) node {2};
\draw (0,-2) rectangle (3,-3);
\draw (0.5,-2.5) node {0};
\draw (1.5,-2.5) node {0};
\draw (2.5,-2.5) node {0};
\draw (0,-3) rectangle (1,-4);
\draw (0.5,-3.5) node {0};
\draw (1,-3) rectangle (3,-4);
\draw (1.5,-3.5) node {1};
\draw (2.5,-3.5) node {1};
\end{tikzpicture}
\\[0.4cm]
\begin{tikzpicture}[scale = 0.4]
\draw (0,0) rectangle (3,-1);
\draw (0.5,-0.5) node {0};
\draw (1.5,-0.5) node {0};
\draw (2.5,-0.5) node {0};
\draw (3,0) rectangle (5,-1);
\draw (3.5,-0.5) node {3};
\draw (4.5,-0.5) node {3};
\draw (0,-1) rectangle (4,-2);
\draw (0.5,-1.5) node {2};
\draw (1.5,-1.5) node {2};
\draw (2.5,-1.5) node {2};
\draw (3.5,-1.5) node {2};
\draw (0,-2) rectangle (1,-3);
\draw (0.5,-2.5) node {0};
\draw (1,-2) rectangle (3,-3);
\draw (1.5,-2.5) node {1};
\draw (2.5,-2.5) node {1};
\draw (0,-3) rectangle (3,-4);
\draw (0.5,-3.5) node {0};
\draw (1.5,-3.5) node {0};
\draw (2.5,-3.5) node {0};
\end{tikzpicture}
\space{}\quad 
\begin{tikzpicture}[scale = 0.4]
\draw (0,0) rectangle (1,-1);
\draw (0.5,-0.5) node {0};
\draw (1,0) rectangle (3,-1);
\draw (1.5,-0.5) node {1};
\draw (2.5,-0.5) node {1};
\draw (3,0) rectangle (5,-1);
\draw (3.5,-0.5) node {3};
\draw (4.5,-0.5) node {3};
\draw (0,-1) rectangle (4,-2);
\draw (0.5,-1.5) node {2};
\draw (1.5,-1.5) node {2};
\draw (2.5,-1.5) node {2};
\draw (3.5,-1.5) node {2};
\draw (0,-2) rectangle (3,-3);
\draw (0.5,-2.5) node {0};
\draw (1.5,-2.5) node {0};
\draw (2.5,-2.5) node {0};
\draw (0,-3) rectangle (3,-4);
\draw (0.5,-3.5) node {0};
\draw (1.5,-3.5) node {0};
\draw (2.5,-3.5) node {0};
\end{tikzpicture}
\space{}\quad 
\begin{tikzpicture}[scale = 0.4]
\draw (0,0) rectangle (1,-1);
\draw (0.5,-0.5) node {0};
\draw (1,0) rectangle (5,-1);
\draw (1.5,-0.5) node {2};
\draw (2.5,-0.5) node {2};
\draw (3.5,-0.5) node {2};
\draw (4.5,-0.5) node {2};
\draw (0,-1) rectangle (2,-2);
\draw (0.5,-1.5) node {1};
\draw (1.5,-1.5) node {1};
\draw (2,-1) rectangle (4,-2);
\draw (2.5,-1.5) node {3};
\draw (3.5,-1.5) node {3};
\draw (0,-2) rectangle (3,-3);
\draw (0.5,-2.5) node {0};
\draw (1.5,-2.5) node {0};
\draw (2.5,-2.5) node {0};
\draw (0,-3) rectangle (3,-4);
\draw (0.5,-3.5) node {0};
\draw (1.5,-3.5) node {0};
\draw (2.5,-3.5) node {0};
\end{tikzpicture}
\space{}\end{center}
\end{example}

Turning to the polysymmetric case, fix types $\tau$ and $\sigma$,
and fix an ordered sequence of blocks $\delta=(d_1^{e_1},\ldots,d_s^{e_s})$.
We seek the coefficient of $m_{\tau}^{\otimes}$ in the
$m^{\otimes}$-expansion of $m_{\sigma}^{\otimes}P_{\delta}$.
We describe this coefficient as the weighted sum of 
\emph{$P$-tensor brick tabloids} constructed as follows.
We fill the tensor diagram of $\tau$ with certain horizontal bricks
labeled $0,1,2,\ldots,s$ so that every box is covered by exactly one brick.
The brick labels in each row of each component diagram must strictly
increase reading left to right.  In each tensor component $k$, we use
$\ell(\sigma|_k)$ bricks labeled $0$, with lengths given by the parts of 
the partition $\sigma|_k$. Next, fix $i$ between $1$ and $s$.
Recall from~\eqref{eq:Pdm-in-p} that 
$P_{d_i^{e_i}}=\sum_{k_i|d_i} k_i p_{d_ie_i/k_i}(\xx_{k_i*})$.
When building a particular $P$-tensor brick tabloid, we may use exactly one 
brick labeled $i$, chosen as follows: pick a positive divisor $k_i$ of $d_i$;
make a brick labeled $i$ containing $d_ie_i/k_i$ cells; and place that
brick in the $k_i$th component diagram of $\dg(\tau)$. 
Every positively-labeled brick placed in component diagram $k$ has
a \emph{weight} of $k$, while bricks labeled $0$ have weight $1$.

Any filling $T$ of $\dg(\tau)$ satisfying all rules stated here is called
a \emph{$P$-tensor brick tabloid (PTBT)} of \emph{shape} $\tau$ and
\emph{extended content} $(\sigma;\delta)$. 
Let $\PTBT(\tau,(\sigma;\delta))$ be the set of all such objects.
When $\sigma$ is empty, we write $\PTBT(\tau,\delta)$ and speak of
PTBT of shape $\tau$ and content $\delta$.
The weight of a PTBT $T$, written $\wt(T)$, 
is the product of the weights of all the bricks in it.
Equivalently, if component diagram $k$ in $T$ contains $n_k(T)$ bricks
with positive labels, then $\wt(T)=\prod_{k\geq 1} k^{n_k(T)}$.

\begin{theorem}
For any type $\sigma$ and sequence of blocks $\delta$,
\[ m_{\sigma}^{\otimes}P_{\delta}
  =\sum_{\tau}\left[\sum_{T\in\PTBT(\tau,(\sigma;\delta))} \wt(T)\right]
  m_{\tau}^{\otimes}. \] 
\end{theorem}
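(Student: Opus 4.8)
The plan is to compute the coefficient of $m_{\tau}^{\otimes}$ directly by extracting the coefficient of a representative monomial, reducing everything to the classical rule \cref{prop:mp-in-m} applied separately within each tensor component. First I would expand $P_{\delta}=\prod_{i=1}^s P_{d_i^{e_i}}$ using \eqref{eq:Pdm-in-p}: writing $P_{d_i^{e_i}}=\sum_{k_i\mid d_i} k_i\,p_{d_ie_i/k_i}(\xx_{k_i*})$ and applying the distributive law, one obtains a sum over all divisor tuples $(k_1,\ldots,k_s)$ with $k_i\mid d_i$, where the summand attached to a fixed tuple is the scalar $\prod_{i=1}^s k_i$ times
\[ m_{\sigma}^{\otimes}\cdot\prod_{i=1}^s p_{d_ie_i/k_i}(\xx_{k_i*}). \]
Because each factor lives in exactly one variable set $\xx_{k*}$, this product regroups by tensor component: in component $k$ it equals $m_{\sigma|_k}(\xx_{k*})\cdot\prod_{i:\,k_i=k} p_{d_ie_i/k}(\xx_{k*})$.

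Next, since the variable sets $\xx_{k*}$ for distinct $k$ are disjoint, the coefficient of $m_{\tau}^{\otimes}=\prod_{k\geq 1} m_{\tau|_k}(\xx_{k*})$ in the above product factors as a product, over $k\geq 1$, of the coefficient of $m_{\tau|_k}(\xx_{k*})$ in the component-$k$ expression. For each fixed $k$ I would apply \cref{prop:mp-in-m}, multiplying $m_{\sigma|_k}$ by the power-sums $p_{d_ie_i/k}$ taken in increasing order of the label $i$, over those $i$ with $k_i=k$: this coefficient equals the number of $p$-brick tabloids of shape $\tau|_k$ and extended content $(\sigma|_k;\alpha^{(k)})$, where $\alpha^{(k)}$ lists the lengths $d_ie_i/k$ with their original labels $i$. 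The strict-increase-of-labels condition survives the restriction to the subset of labels occurring in component $k$ because those labels are placed in increasing order, exactly as in the proof of \cref{prop:mp-in-m}.

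Finally I would assemble the per-component data into a single combinatorial object and match weights. A divisor tuple $(k_1,\ldots,k_s)$ together with a choice of $p$-brick tabloid in each component is precisely the data of a $P$-tensor brick tabloid $T\in\PTBT(\tau,(\sigma;\delta))$: the label-$0$ bricks in component $k$ record $m_{\sigma|_k}$, while the brick labeled $i$ sits in component $k_i$ and has length $d_ie_i/k_i$, which is exactly the placement rule defining a PTBT. Conversely, a PTBT recovers the tuple by reading off which component holds each positively labeled brick. Under this correspondence the scalar $\prod_{i=1}^s k_i$ for the tuple equals $\prod_{k\geq 1} k^{n_k(T)}=\wt(T)$, since brick $i$ contributes the factor $k_i$ of its host component. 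Summing the contributions $\wt(T)$ over all $T$ and over all $\tau$ yields the stated formula. The main obstacle is the bookkeeping: verifying that the product-of-components coefficient extraction, the per-component application of \cref{prop:mp-in-m}, and the reassembly into PTBTs all respect the label-ordering constraint and that the scalar weights multiply correctly; once this bijective correspondence is pinned down the identity follows, extending the distributive-law argument of \cref{prop:mp-in-m} componentwise across the tensor factors.
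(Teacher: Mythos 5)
Your proof is correct and follows essentially the same route as the paper: both extract the coefficient of the representative monomial $\xx^{\tau}$, expand $P_{\delta}$ via \eqref{eq:Pdm-in-p} and the distributive law, and use the $p$-brick tabloid correspondence of \cref{prop:mp-in-m} to identify the resulting weighted choices with the objects of $\PTBT(\tau,(\sigma;\delta))$. The only difference is organizational: you fix a divisor tuple $(k_1,\ldots,k_s)$ first, factor the coefficient across the disjoint variable sets $\xx_{k*}$, and apply \cref{prop:mp-in-m} as a black box in each tensor component, whereas the paper re-runs that proposition's monomial-choice argument globally; the underlying bijection and weight bookkeeping are identical.
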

\begin{proof}
We expand $m_{\sigma}^{\otimes}(\xx_{**})P_{\delta}(\xx_{**})$ by
choosing one monomial from each factor, multiplying those monomials,
and adding over all possible choices of monomials. The weighted $P$-tensor brick
tabloids in $\PTBT(\tau,(\sigma;\delta))$ record all possible ways the
monomial $\xx^{\tau}=\xx_{1*}^{\tau|_1}\xx_{2*}^{\tau|_2}\cdots
 \xx_{k*}^{\tau|_k}\cdots$ can arise by such choices. 
The choice of a monomial from 
$m_{\sigma}^{\otimes}(\xx_{**})=\prod_{k\geq 1} m_{\sigma|_k}(\xx_{k*})$
is recorded by the placement of all the bricks labeled $0$.
For $1\leq i\leq s$, the choice of a monomial from $P_{d_i^{e_i}}(\xx_{**})$
is recorded by the placement of the brick labeled $i$ in some component
diagram $k_i$, including the appropriate weight $k_i$. The monomial choices
correspond bijectively to the objects in $\PTBT(\tau,(\sigma;\delta))$ as 
explained in the proof of \cref{prop:mp-in-m}.
\end{proof}

\begin{corollary}
For all types $\sigma,\tau\vdt n$, the coefficient of $m_{\tau}^{\otimes}$
in the $m^{\otimes}$-expansion of $P_{\sigma}$ is
\[ \mcM(P,m^{\otimes})_{\tau,\sigma} =\sum_{T\in\PTBT(\tau,\sigma)} \wt(T). \]
\end{corollary}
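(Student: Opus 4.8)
The plan is to derive this corollary as the special case of the theorem just proved in which the starting type is empty. Concretely, I would view the type $\sigma$ as an ordered sequence of blocks and apply that theorem with this sequence playing the role of the content $\delta$, while taking the theorem's ``starting type'' to be the empty type $\varnothing$. The first step is the observation that $m_{\varnothing}^{\otimes}=1$, so the left-hand side of the theorem collapses to $m_{\varnothing}^{\otimes}P_{\sigma}=P_{\sigma}$. Here I use that $P_{\delta}$ in the theorem is by definition the product of the factors $P_{d_i^{e_i}}$ over the blocks of $\delta$, which equals $P_{\sigma}$ when $\delta=\sigma$, independently of the order in which those blocks are listed. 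This mirrors exactly how the text obtains \cref{cor:M(P,s-tensor)} from \cref{thm:s-times-P2} by ``starting with $s_0^{\otimes}=1$ and multiplying by $P_{\sigma}$.''

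Next I would invoke the stated indexing convention that $\PTBT(\tau,(\varnothing;\delta))=\PTBT(\tau,\delta)$. With the starting type empty, each component $\sigma|_k$ is the empty partition, so no bricks labeled $0$ occur, and the set of tabloids in the theorem's inner sum becomes precisely $\PTBT(\tau,\sigma)$. Substituting these two simplifications into the theorem yields
\[ P_{\sigma}=\sum_{\tau}\left[\sum_{T\in\PTBT(\tau,\sigma)}\wt(T)\right]m_{\tau}^{\otimes}. \]

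Finally, I would read off the transition matrix entry. By the defining relation \eqref{eq:trans-mat2} for $\mcM(P,m^{\otimes})$, the coefficient of $m_{\tau}^{\otimes}$ in the $m^{\otimes}$-expansion of $P_{\sigma}$ is exactly $\mcM(P,m^{\otimes})_{\tau,\sigma}$, and comparing with the displayed expansion gives the claimed formula. There is no genuine obstacle here: the argument is a direct specialization of the preceding theorem. The only point requiring a moment's care is the bookkeeping of indices, namely keeping straight that the subscript $\sigma$ of the corollary corresponds to the \emph{content} $\delta$ of the theorem rather than to the theorem's starting type.
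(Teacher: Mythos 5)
Your proposal is correct and matches the paper's (implicit) argument exactly: the corollary is obtained by specializing the preceding theorem to an empty starting type, using $m_{\varnothing}^{\otimes}=1$, the convention $\PTBT(\tau,(\varnothing;\delta))=\PTBT(\tau,\delta)$, and the defining relation \eqref{eq:trans-mat2}. Your attention to the index bookkeeping (the corollary's $\sigma$ playing the role of the theorem's content $\delta$) is exactly the right point of care.
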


\begin{example}
We compute the $m^\otimes$-expansion of $P_{2^22^2}$ by drawing the following 
PTBTs. Each PTBT $T$ is labeled by the divisor pair $(k_1, k_2)$ 
that produced it and its weight, namely $\wt(T)=k_1k_2$.
\begin{center}
\begin{tikzpicture}[scale = 0.4]
\draw (0,0) rectangle (4,-1);
\draw (0.5,-0.5) node {1};
\draw (1.5,-0.5) node {1};
\draw (2.5,-0.5) node {1};
\draw (3.5,-0.5) node {1};
\draw (0,-1) rectangle (4,-2);
\draw (0.5,-1.5) node {2};
\draw (1.5,-1.5) node {2};
\draw (2.5,-1.5) node {2};
\draw (3.5,-1.5) node {2};
\end{tikzpicture}
\space{}\raisebox{7pt}{$\otimes \,\varnothing$}
\qquad \begin{tikzpicture}[scale = 0.4]
\draw (0,0) rectangle (4,-1);
\draw (0.5,-0.5) node {2};
\draw (1.5,-0.5) node {2};
\draw (2.5,-0.5) node {2};
\draw (3.5,-0.5) node {2};
\draw (0,-1) rectangle (4,-2);
\draw (0.5,-1.5) node {1};
\draw (1.5,-1.5) node {1};
\draw (2.5,-1.5) node {1};
\draw (3.5,-1.5) node {1};
\end{tikzpicture}
\space{}\raisebox{7pt}{$\otimes \,\varnothing$}
\qquad \begin{tikzpicture}[scale = 0.4]
\draw (0,0) rectangle (4,-1);
\draw (0.5,-0.5) node {1};
\draw (1.5,-0.5) node {1};
\draw (2.5,-0.5) node {1};
\draw (3.5,-0.5) node {1};
\draw (4,0) rectangle (8,-1);
\draw (4.5,-0.5) node {2};
\draw (5.5,-0.5) node {2};
\draw (6.5,-0.5) node {2};
\draw (7.5,-0.5) node {2};
\end{tikzpicture}
\space{}\raisebox{4pt}{$\otimes\, \varnothing$}
\qquad \begin{tikzpicture}[scale = 0.4]
\draw (0,0) rectangle (4,-1);
\draw (0.5,-0.5) node {1};
\draw (1.5,-0.5) node {1};
\draw (2.5,-0.5) node {1};
\draw (3.5,-0.5) node {1};
\end{tikzpicture}
\raisebox{4pt}{$\otimes\:$}\begin{tikzpicture}[scale = 0.4]
\draw (0,0) rectangle (2,-1);
\draw (0.5,-0.5) node {2};
\draw (1.5,-0.5) node {2};
\end{tikzpicture}
\space{}\\
\hspace{-1cm} $(1,1),\wt=1$ \qquad\ \ 
$(1,1),\wt=1$ \qquad\qquad 
$(1,1),\wt=1$ \qquad\qquad\qquad\ \ 
$(1,2),\wt=2$\\[0.3cm]

\begin{tikzpicture}[scale = 0.4]
\draw (0,0) rectangle (4,-1);
\draw (0.5,-0.5) node {2};
\draw (1.5,-0.5) node {2};
\draw (2.5,-0.5) node {2};
\draw (3.5,-0.5) node {2};
\end{tikzpicture}
\space{}\raisebox{4pt}{$\,\otimes\,$} \begin{tikzpicture}[scale = 0.4]
\draw (0,0) rectangle (2,-1);
\draw (0.5,-0.5) node {1};
\draw (1.5,-0.5) node {1};
\end{tikzpicture}
\space{}\qquad \raisebox{7pt}{$\varnothing\, \otimes$} \begin{tikzpicture}[scale = 0.4]
\draw (0,0) rectangle (2,-1);
\draw (0.5,-0.5) node {1};
\draw (1.5,-0.5) node {1};
\draw (0,-1) rectangle (2,-2);
\draw (0.5,-1.5) node {2};
\draw (1.5,-1.5) node {2};
\end{tikzpicture}
\space{}\qquad \raisebox{7pt}{$\varnothing\, \otimes\,$}\begin{tikzpicture}[scale = 0.4]
\draw (0,0) rectangle (2,-1);
\draw (0.5,-0.5) node {2};
\draw (1.5,-0.5) node {2};
\draw (0,-1) rectangle (2,-2);
\draw (0.5,-1.5) node {1};
\draw (1.5,-1.5) node {1};
\end{tikzpicture}
\space{}\qquad \raisebox{4pt}{$\varnothing\, \otimes\,$}\begin{tikzpicture}[scale = 0.4]
\draw (0,0) rectangle (2,-1);
\draw (0.5,-0.5) node {1};
\draw (1.5,-0.5) node {1};
\draw (2,0) rectangle (4,-1);
\draw (2.5,-0.5) node {2};
\draw (3.5,-0.5) node {2};
\end{tikzpicture}
\\[0.05cm] 
\quad $(2,1),\wt=2$ \qquad\qquad\qquad\quad 
$(2,2),\wt=4$ \quad\quad 
$(2,2),\wt=4$ \qquad 
$(2,2),\wt=4$
\end{center}
This gives 
$P_{2^22^2} = 2 m^\otimes_{1^41^4} + m^\otimes_{1^8} 
+ 4m^\otimes_{2^21^4} + 8m^\otimes_{2^22^2} + 4m^\otimes_{2^4}.$
\end{example}

\subsection{Rule for $m^{\otimes}_{\sigma}H_{\delta}$ and $\mcM(H,m^{\otimes})$}
\label{subsec:Hm}

In~\cite{polysymm}, the authors show that the coefficient of $m_\tau^\otimes$ 
in $H_\sigma$ is the number of arrangements of one type into another. 
They write $a_{\tau,\sigma}$ for what we call 
$\mcM(H,m^{\otimes})_{\tau,\sigma}$, so
$H_\sigma = \sum\limits_{\tau \vdt |\sigma|} a_{\tau,\sigma}m^\otimes_\tau$.
Here we develop alternate combinatorial formulas for these coefficients based 
on tensor versions of brick tabloids, by extending classical results for the 
symmetric case (cf.~\cite{eg-rem}) to the polysymmetric case. 

Let $\mu$ and $\lambda$ be partitions, and 
let $\alpha=(\alpha_1,\ldots,\alpha_s)$ be a sequence of positive integers. 
Define an \textit{$h$-brick tabloid of shape $\lambda$ and
 extended content $(\mu;\alpha)$} as follows. Construct $\alpha_i$ 
$1\times 1$ bricks labeled $i$ and $\ell(\mu)$ bricks labeled $0$
of lengths $\mu_1,\mu_2,\ldots$. An $h$-brick tabloid
is a non-overlapping cover of $\dg(\lambda)$ using these bricks 
such that each brick labeled 0 appears at most once in a row
while brick labels weakly increase along rows. 

\begin{example}\label{example:h-in-m}
The $h$-brick tabloids of shape $(4,4)$ with extended content $((2,1);(2,1,2))$ are
 \begin{center}
 \begin{tikzpicture}[scale = 0.4]
\draw (0,0) rectangle (2,-1);
\draw (0.5,-0.5) node {0};
\draw (1.5,-0.5) node {0};
\draw (2,0) rectangle (3,-1);
\draw (2.5,-0.5) node {1};
\draw (3,0) rectangle (4,-1);
\draw (3.5,-0.5) node {1};
\draw (0,-1) rectangle (1,-2);
\draw (0.5,-1.5) node {0};
\draw (1,-1) rectangle (2,-2);
\draw (1.5,-1.5) node {2};
\draw (2,-1) rectangle (3,-2);
\draw (2.5,-1.5) node {3};
\draw (3,-1) rectangle (4,-2);
\draw (3.5,-1.5) node {3};
\end{tikzpicture}
\space{}\quad 
\begin{tikzpicture}[scale = 0.4]
\draw (0,0) rectangle (2,-1);
\draw (0.5,-0.5) node {0};
\draw (1.5,-0.5) node {0};
\draw (2,0) rectangle (3,-1);
\draw (2.5,-0.5) node {1};
\draw (3,0) rectangle (4,-1);
\draw (3.5,-0.5) node {3};
\draw (0,-1) rectangle (1,-2);
\draw (0.5,-1.5) node {0};
\draw (1,-1) rectangle (2,-2);
\draw (1.5,-1.5) node {1};
\draw (2,-1) rectangle (3,-2);
\draw (2.5,-1.5) node {2};
\draw (3,-1) rectangle (4,-2);
\draw (3.5,-1.5) node {3};
\end{tikzpicture}
\space{}\quad \begin{tikzpicture}[scale = 0.4]
\draw (0,0) rectangle (2,-1);
\draw (0.5,-0.5) node {0};
\draw (1.5,-0.5) node {0};
\draw (2,0) rectangle (3,-1);
\draw (2.5,-0.5) node {3};
\draw (3,0) rectangle (4,-1);
\draw (3.5,-0.5) node {3};
\draw (0,-1) rectangle (1,-2);
\draw (0.5,-1.5) node {0};
\draw (1,-1) rectangle (2,-2);
\draw (1.5,-1.5) node {1};
\draw (2,-1) rectangle (3,-2);
\draw (2.5,-1.5) node {1};
\draw (3,-1) rectangle (4,-2);
\draw (3.5,-1.5) node {2};
\end{tikzpicture}
\space{}\quad
\begin{tikzpicture}[scale = 0.4]
\draw (0,0) rectangle (2,-1);
\draw (0.5,-0.5) node {0};
\draw (1.5,-0.5) node {0};
\draw (2,0) rectangle (3,-1);
\draw (2.5,-0.5) node {2};
\draw (3,0) rectangle (4,-1);
\draw (3.5,-0.5) node {3};
\draw (0,-1) rectangle (1,-2);
\draw (0.5,-1.5) node {0};
\draw (1,-1) rectangle (2,-2);
\draw (1.5,-1.5) node {1};
\draw (2,-1) rectangle (3,-2);
\draw (2.5,-1.5) node {1};
\draw (3,-1) rectangle (4,-2);
\draw (3.5,-1.5) node {3};
\end{tikzpicture}
\space{}\quad \begin{tikzpicture}[scale = 0.4]
\draw (0,0) rectangle (2,-1);
\draw (0.5,-0.5) node {0};
\draw (1.5,-0.5) node {0};
\draw (2,0) rectangle (3,-1);
\draw (2.5,-0.5) node {1};
\draw (3,0) rectangle (4,-1);
\draw (3.5,-0.5) node {2};
\draw (0,-1) rectangle (1,-2);
\draw (0.5,-1.5) node {0};
\draw (1,-1) rectangle (2,-2);
\draw (1.5,-1.5) node {1};
\draw (2,-1) rectangle (3,-2);
\draw (2.5,-1.5) node {3};
\draw (3,-1) rectangle (4,-2);
\draw (3.5,-1.5) node {3};
\end{tikzpicture}
\\[0.4cm]
\begin{tikzpicture}[scale = 0.4]
\draw (0,0) rectangle (1,-1);
\draw (0.5,-0.5) node {0};
\draw (1,0) rectangle (2,-1);
\draw (1.5,-0.5) node {2};
\draw (2,0) rectangle (3,-1);
\draw (2.5,-0.5) node {3};
\draw (3,0) rectangle (4,-1);
\draw (3.5,-0.5) node {3};
\draw (0,-1) rectangle (2,-2);
\draw (0.5,-1.5) node {0};
\draw (1.5,-1.5) node {0};
\draw (2,-1) rectangle (3,-2);
\draw (2.5,-1.5) node {1};
\draw (3,-1) rectangle (4,-2);
\draw (3.5,-1.5) node {1};
\end{tikzpicture}
\space{}\quad
\begin{tikzpicture}[scale = 0.4]
\draw (0,0) rectangle (1,-1);
\draw (0.5,-0.5) node {0};
\draw (1,0) rectangle (2,-1);
\draw (1.5,-0.5) node {1};
\draw (2,0) rectangle (3,-1);
\draw (2.5,-0.5) node {2};
\draw (3,0) rectangle (4,-1);
\draw (3.5,-0.5) node {3};
\draw (0,-1) rectangle (2,-2);
\draw (0.5,-1.5) node {0};
\draw (1.5,-1.5) node {0};
\draw (2,-1) rectangle (3,-2);
\draw (2.5,-1.5) node {1};
\draw (3,-1) rectangle (4,-2);
\draw (3.5,-1.5) node {3};
\end{tikzpicture}
\space{}\quad
\begin{tikzpicture}[scale = 0.4]
\draw (0,0) rectangle (1,-1);
\draw (0.5,-0.5) node {0};
\draw (1,0) rectangle (2,-1);
\draw (1.5,-0.5) node {1};
\draw (2,0) rectangle (3,-1);
\draw (2.5,-0.5) node {1};
\draw (3,0) rectangle (4,-1);
\draw (3.5,-0.5) node {2};
\draw (0,-1) rectangle (2,-2);
\draw (0.5,-1.5) node {0};
\draw (1.5,-1.5) node {0};
\draw (2,-1) rectangle (3,-2);
\draw (2.5,-1.5) node {3};
\draw (3,-1) rectangle (4,-2);
\draw (3.5,-1.5) node {3};
\end{tikzpicture}
\space{}\quad
\begin{tikzpicture}[scale = 0.4]
\draw (0,0) rectangle (1,-1);
\draw (0.5,-0.5) node {0};
\draw (1,0) rectangle (2,-1);
\draw (1.5,-0.5) node {1};
\draw (2,0) rectangle (3,-1);
\draw (2.5,-0.5) node {1};
\draw (3,0) rectangle (4,-1);
\draw (3.5,-0.5) node {3};
\draw (0,-1) rectangle (2,-2);
\draw (0.5,-1.5) node {0};
\draw (1.5,-1.5) node {0};
\draw (2,-1) rectangle (3,-2);
\draw (2.5,-1.5) node {2};
\draw (3,-1) rectangle (4,-2);
\draw (3.5,-1.5) node {3};
\end{tikzpicture}
\space{}\quad
\begin{tikzpicture}[scale = 0.4]
\draw (0,0) rectangle (1,-1);
\draw (0.5,-0.5) node {0};
\draw (1,0) rectangle (2,-1);
\draw (1.5,-0.5) node {1};
\draw (2,0) rectangle (3,-1);
\draw (2.5,-0.5) node {3};
\draw (3,0) rectangle (4,-1);
\draw (3.5,-0.5) node {3};
\draw (0,-1) rectangle (2,-2);
\draw (0.5,-1.5) node {0};
\draw (1.5,-1.5) node {0};
\draw (2,-1) rectangle (3,-2);
\draw (2.5,-1.5) node {1};
\draw (3,-1) rectangle (4,-2);
\draw (3.5,-1.5) node {2};
\end{tikzpicture}
\space{}\end{center}
There are 10 $h$-brick tabloids, and 10 is the coefficient of $m_{(4,4)}$ 
in the $m$-expansion of $m_{(2,1)}h_{(2,1,2)}$. This illustrates the
result proved next.
\end{example}

\begin{prop}\label{prop:h-brick}
Let $\lambda$, $\mu$ be partitions and $\alpha=(\alpha_1,\ldots,\alpha_s)$ 
be a sequence of positive integers. 
Then the coefficient of $m_\lambda$ in $m_\mu h_\alpha$ 
is the number of $h$-brick tabloids of shape $\lambda$ and extended content 
$(\mu;\alpha)$.
\end{prop}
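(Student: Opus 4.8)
The plan is to mirror the proof of \cref{prop:mp-in-m}, reducing to the extraction of a single monomial coefficient and then setting up a bijection between monomial selections and $h$-brick tabloids; the one structural change from the power-sum case is that each factor $h_{\alpha_i}$ contributes \emph{single} cells rather than a single long brick. First I would observe that the coefficient of $m_\lambda$ in $m_\mu h_\alpha$ equals the coefficient of the monomial $\xx^\lambda = x_1^{\lambda_1}x_2^{\lambda_2}\cdots$ in the polynomial $m_\mu(\xx)h_{\alpha_1}(\xx)\cdots h_{\alpha_s}(\xx)$. Expanding this product by the distributive law amounts to choosing one monomial from each factor and multiplying them; the desired coefficient counts exactly those selections whose product is $\xx^\lambda$. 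The $h$-brick tabloids will encode these selections.

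Next I would spell out the correspondence. A monomial chosen from $m_\mu(\xx)$ has the form $\prod_t x_{i_t}^{\mu_t}$ with distinct row indices $i_t$, and I would record it by drawing, for each part $\mu_t$ assigned to row $k$, a brick labeled $0$ of length $\mu_t$ in row $k$ of $\dg(\lambda)$; since the $i_t$ are distinct, no row receives more than one $0$-brick, matching the defining condition. A monomial chosen from $h_{\alpha_i}(\xx)$ is an arbitrary monomial of degree $\alpha_i$, say $\prod_k x_k^{c_k}$ with $\sum_k c_k = \alpha_i$, and I would record it by placing $c_k$ single cells labeled $i$ in row $k$ for each $k$ (this is precisely where single-cell bricks replace the length-$\alpha_i$ brick of the power-sum case). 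The product of all chosen monomials equals $\xx^\lambda$ exactly when the number of cells deposited in row $k$ totals $\lambda_k$ for every $k$, which is the requirement that the bricks tile $\dg(\lambda)$ without overlap.

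Finally I would verify that this correspondence is a bijection, which is the only point needing care. Each selection fixes, for every row, a multiset of bricks---at most one $0$-brick together with some single cells carrying positive labels---and placing the $0$-brick at the left followed by the positive cells in weakly increasing label order yields a \emph{unique} filling, so each selection produces exactly one $h$-brick tabloid; conversely, reading off the $0$-brick lengths recovers the $m_\mu$-monomial, and counting the cells of each positive label $i$ in each row recovers the $h_{\alpha_i}$-monomial. Two subtleties must be checked here: bricks of equal length labeled $0$ are treated as indistinguishable, matching the fact that $m_\mu$ is a sum over \emph{distinct} rearrangements of the parts of $\mu$; and the increase along rows is only \emph{weak}, in contrast with the strict increase in \cref{prop:mp-in-m}, precisely because a single factor $h_{\alpha_i}$ may deposit several cells sharing the label $i$ into one row. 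Confirming that this canonical left-to-right ordering makes the map well defined and invertible is the crux, and it completes the proof.
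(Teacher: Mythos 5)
Your proposal is correct and follows essentially the same route as the paper's proof: reduce to extracting the coefficient of $\xx^{\lambda}$ in $m_{\mu}(\xx)h_{\alpha_1}(\xx)\cdots h_{\alpha_s}(\xx)$, then biject ordered factorizations $f_0f_1\cdots f_s$ with $h$-brick tabloids, using the fact that within each row the at-most-one $0$-brick plus weakly increasing labels force a unique placement once the cell counts $r_{ij}$ per row and label are known. Your two flagged subtleties (indistinguishability of equal-length $0$-bricks, and weak rather than strict increase because one factor $h_{\alpha_i}$ may contribute several unit cells to the same row) are exactly the points the paper's argument relies on.
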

\begin{proof}
As in the proof of \cref{prop:mp-in-m},
the coefficient of $m_{\lambda}$ in the $m$-expansion of 
$m_{\mu}h_{\alpha}$ equals the coefficient of 
$\xx^{\lambda}$ in $m_{\mu}(\xx)h_{\alpha}(\xx)$.
In turn, this coefficient is the number of ordered factorizations
of $\xx^{\lambda}$ of the form $f_0f_1\cdots f_s$,
where $f_0$ is a monomial in $m_{\mu}(\xx)$ and
$f_j$ is a monomial in $h_{\alpha_j}(\xx)$ for $j=1,2,\ldots,s$.

There is a bijection between the set of such factorizations
and the set of $h$-brick tabloids described in the proposition.
On one hand, given such an $h$-brick tabloid $T$, let $n_{ij}(T)$
be the number of cells in row $i$ of $T$ covered by a brick labeled $j$.
Define $f_j=\prod_{i\geq 1} x_i^{n_{ij}(T)}$ for $j=0,1,2,\ldots,s$.
By the rules for the brick sizes, $f_0$ is one of the monomials in
$m_{\mu}(\xx)$ and $f_j$ is a monomial of degree $\alpha_j$, which
is one of the terms in $h_{\alpha_j}(\xx)$. Since every cell in $\dg(\lambda)$
is covered by exactly one brick, $f_0f_1\cdots f_s=\xx^{\lambda}$ follows.

The inverse bijection acts as follows.
Given an ordered factorization $f_0f_1\cdots f_s$ of
$x^{\lambda}$, make the associated $h$-brick tabloid as follows.
Write $f_j=\prod_{i\geq 1} x_i^{r_{ij}}$ for $j=0,1,2,\ldots,s$.
Since brick labels weakly increase in each row, 
with at most one brick labeled $0$ in each row,
there is exactly one way to cover $\dg(\lambda)$ with bricks
such that the resulting tabloid has $r_{ij}$ cells in row $i$ covered 
by a brick labeled $j$ for all $i,j$.
\end{proof}

By putting $\mu = \varnothing$ and $\alpha=\nu$ (a partition)
in \cref{prop:h-brick}, we can find the 
coefficient of $m_{\lambda}$ in $h_\nu$ using objects of 
\emph{shape} $\lambda$ and \emph{content} $\nu$.
\begin{example}
The coefficient of $m_{(3,2)}$ in $h_{(2,2,1)}$ is 5,
which is the number of $h$-brick tabloids of shape $(3,2)$ and content
\boks{0.4} $(2,2,1)$ shown below.
    \[\begin{array}{ccccc}
             \yt{112,23}  & \yt{113,22} & \yt{223,11} & \yt{122,13} & \yt{123,12}\\[0.5cm]
             x_1^2\cdot  x_1x_2 \cdot x_2 & x_1^2\cdot x_2^2 \cdot x_1  & x_2^2\cdot x_1^2 \cdot x_1  & x_1x_2\cdot x_1^2 \cdot x_2  &x_1x_2\cdot x_1x_2 \cdot x_1 
    \end{array}\]
The ordered factorization under each $h$-brick tabloid 
is computed as in the proof: we have $x_i$ appearing in the $j$th factor 
as many times as the label $j$ appears in row $i$. 
For instance, for the leftmost $h$-brick tabloid, the first factor 
is $x_1^2$ as 1 appears twice in the first row. The second factor 
is $x_1x_2$ as 2 appears in the first and the second row. The third factor 
is $x_2$ because 3 appears once in row 2.
\end{example}

\begin{remark}
It is known that the coefficient of $m_\nu$ in $h_\lambda$ 
and the coefficient of $m_\lambda$ in $h_\nu$ are the same. 
This can be proved by a dual combinatorial construction illustrated in
the next example, where the coefficient of $m_{\lambda}$ in $h_{\nu}$
is found using objects of \emph{shape} $\nu$ and \emph{content} $\lambda$. 
\end{remark}

\begin{example}
    The coefficient of $m_{(3,2)}$ in the expansion of $h_{(2,2,1)}$ is 5,
which is the number of $h$-brick tabloids of shape $(2,2,1)$ and content 
$(3,2)$ shown below.
    \[\begin{array}{ccccc}
             \yt{11,12,2}  & \yt{11,22,1} & \yt{22,11,1} & \yt{12,11,2} & \yt{12,12,1}\\
             x_1^2\cdot x_1x_2 \cdot x_2 & x_1^2\cdot x_2^2 \cdot x_1  & x_2^2\cdot x_1^2 \cdot x_1  & x_1x_2\cdot x_1^2 \cdot x_2  &x_1x_2\cdot x_1x_2 \cdot x_1 
    \end{array}\]
In this case, we convert $h$-brick tabloids to ordered factorizations
as follows. For each brick labeled $i$ in row $j$, 
we include a copy of $x_i$ in the $j$th factor.
\end{example}
As seen in the last two examples, we have two bijections mapping 
$h$-brick tabloids to ordered factorizations. The first bijection forms
the $j$th factor by recording the rows containing the bricks labeled $j$.
The second bijection forms the $j$th factor by recording the brick labels
in row $j$. By composing these maps, we get a bijective proof
that $\mcM(h,m)_{\lambda,\nu}=\mcM(h,m)_{\nu,\lambda}$.

We now extend these results to the polysymmetric case. The objects here
are versions of $h$-brick tabloids for tensor product diagrams.
Let $\tau$ and $\sigma$ be types and $\delta = (d_1^{r_1}, \ldots, d_s^{r_s})$
be a sequence of blocks.
We define an \textit{$H$-tensor brick tabloid (HTBT) of shape $\tau$ and 
extended content $(\sigma;\delta)$} 
as a filling of $\dg(\tau)$ built as follows. 
We first choose partitions $\lambda^{(i)}$ of $d_i$ for $i=1,2,\ldots,s$.
For each $k\geq 1$,
we fill the $k$th component of $\dg(\tau)$ using these rules:
\begin{itemize}
\item Make $m_k(\lambda^{(i)})$ bricks labeled $i$, each of length $r_i$ 
and height $1$. Make $\ell(\sigma|_k)$ bricks labeled $0$, each
of height 1 and with lengths given by the parts of $\sigma|_k$. 
\item Cover $\dg(\tau|_k)$ with these bricks so that labels 
weakly increase in each row, and each row has at most one brick labeled 0.
\end{itemize}

Denote this set of objects by $\HTBT(\tau,(\sigma;\delta))$.
This definition constructs objects similar to $h$-brick tabloids 
but with bricks scaled horizontally according to the multiplicity $r_i$
of the block $d_i^{r_i}$. The degree $d_i$ of the block determines 
the number of such bricks we make. More specifically, 
if the $k$th tensor diagram has $m_{k,i}$ bricks labeled $i$, 
then $\sum_{k\geq 1} km_{k,i}=d_i$ for $i=1,2,\ldots,s$,
where $m_{k,i}=m_k(\lambda^{(i)})$.

\begin{theorem}\label{thm:H-in-m}
Let $\tau$ and $\sigma$ be types and $\delta=(d_1^{r_1}, \ldots, d_s^{r_s})$
be a sequence of blocks. Then the coefficient of $m^\otimes_\tau$ 
in the $m^\otimes$-expansion of $m^\otimes_\sigma H_\delta$ 
is $|\HTBT(\tau,(\sigma;\delta))|$.
\end{theorem}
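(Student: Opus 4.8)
The plan is to follow the template of \cref{prop:mp-in-m} and \cref{prop:h-brick}: reduce the polysymmetric statement to the coefficient of a single monomial, factor that coefficient across tensor components, and then recognize each component factor as a (scaled) $h$-brick count. First I would reduce to extracting one monomial coefficient. Since $m^\otimes_\sigma H_\delta$ is polysymmetric, its monomial expansion assigns equal coefficients to monomials in the same orbit under separately permuting each variable set $\xx_{k*}$, and $m^\otimes_\tau=\prod_{k\geq 1}m_{\tau|_k}(\xx_{k*})$ is exactly the orbit-sum of the canonical monomial $\xx^\tau=\prod_{k\geq 1}\xx_{k*}^{\tau|_k}$ (the dominant representative, using that each $\tau|_k$ is weakly decreasing). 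Hence the coefficient of $m^\otimes_\tau$ equals the coefficient of $\xx^\tau$ in the polynomial $m^\otimes_\sigma(\xx_{**})H_\delta(\xx_{**})$.

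Next I would expand $H_\delta=\prod_{i=1}^s H_{d_i^{r_i}}$ using \cref{prop:H-expansions}(b) and distribute, obtaining
\[ m^\otimes_\sigma H_\delta=\sum_{\lambda^{(1)}\vdash d_1,\ldots,\lambda^{(s)}\vdash d_s}\ \bigotimes_{k\geq 1}\left(m_{\sigma|_k}(\xx_{k*})\prod_{i=1}^s h_{m_k(\lambda^{(i)})}(\xx_{k*}^{r_i})\right). \]
Because $\xx^\tau$ is a product of monomials in the disjoint variable sets $\xx_{k*}$, its coefficient factors: for a fixed tuple $(\lambda^{(i)})$, the coefficient of $\xx^\tau$ is the product over $k\geq 1$ of the coefficient of $\xx_{k*}^{\tau|_k}$ in $m_{\sigma|_k}(\xx_{k*})\prod_i h_{m_k(\lambda^{(i)})}(\xx_{k*}^{r_i})$. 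Thus the total coefficient is $\sum_{(\lambda^{(i)})}\prod_{k\geq 1}N_k$, where $N_k$ is the indicated component coefficient.

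The crux is to identify each $N_k$ as a brick count, which needs a mild generalization of \cref{prop:h-brick}: here the factors $h_{m_k(\lambda^{(i)})}(\xx_{k*}^{r_i})$ are complete homogeneous functions in the \emph{scaled} variables $\xx_{k*}^{r_i}$, so every variable chosen (with repetition) from such a factor contributes $x_{k,j}^{r_i}$, i.e.\ a full block of $r_i$ to the exponent in row $j$. I would record each such choice by a length-$r_i$ brick labeled $i$, instead of the unit bricks of the classical rule. Running the factorization bijection from \cref{prop:h-brick} verbatim---the $0$-bricks (lengths $\sigma|_k$, at most one per row, and indistinguishable when equal) record the monomial drawn from $m_{\sigma|_k}$, while for each $i$ the $m_k(\lambda^{(i)})$ indistinguishable length-$r_i$ bricks labeled $i$ record the chosen variables of the $i$th $h$-factor, laid out in weakly increasing label order along each row---shows that $N_k$ counts exactly the valid brick covers of $\dg(\tau|_k)$. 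Assembling, summing over all tuples $(\lambda^{(1)},\ldots,\lambda^{(s)})$, and observing that a choice of tuple together with a compatible cover in every component is precisely an element of $\HTBT(\tau,(\sigma;\delta))$ gives the claimed count; the bookkeeping $\sum_{k\geq 1}k\,m_k(\lambda^{(i)})=d_i$ is just the constraint that the $i$-brick multiplicities arise from a partition of $d_i$, so the count is exact.

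I expect the real obstacle to be this third step: carefully checking that the scaled variables force length-$r_i$ bricks and that the weakly-increasing-label convention still yields a clean bijection. In particular I would need to verify that same-label, same-length bricks are treated as indistinguishable, so that the \emph{unordered} products appearing in each $h$ and in $m_{\sigma|_k}$ are matched exactly with no over- or under-counting, and that the at-most-one-$0$-brick-per-row condition correctly encodes the distinct-variable structure of the $m$-factor.
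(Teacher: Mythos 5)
Your proposal is correct and follows essentially the same route as the paper: expand $H_\delta$ via \cref{prop:H-expansions}(b), reduce to extracting the coefficient of the canonical monomial $\xx^\tau$ component-by-component, and reuse the bijection from \cref{prop:h-brick} with the $1\times 1$ bricks scaled horizontally to length $r_i$ to account for the plethystic substitution $\xx_{k*}\mapsto\xx_{k*}^{r_i}$. The additional details you flag (factoring over disjoint variable sets, indistinguishability of equal bricks, recovering the tuple $(\lambda^{(i)})$ from the brick multiplicities) are exactly the bookkeeping the paper leaves implicit in its phrase ``the result then follows as in the proof of \cref{prop:h-brick}.''
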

\begin{proof}
Recall from \cref{prop:H-expansions}(b) that 
$H_{d^r} = \sum\limits_{\lambda\vdash d} 
\prod_{k\geq 1} h_{m_k(\lambda)}(\xx^r_{k*})$, so
\begin{equation}\label{eq:H-in-m-proof}
 m_{\sigma}^{\otimes}H_{\delta}
  =\sum_{\lambda^{(1)}\vdash d_1}\cdots
   \sum_{\lambda^{(s)}\vdash d_s} 
   \prod_{k\geq 1}\left[m_{\sigma|_k}(\xx_{k*})
   \prod_{i=1}^s h_{m_k(\lambda^{(i)})}(\xx_{k*}^{r_i})\right]. 
\end{equation}
The $k$th component of the tensor diagram $\dg(\tau)$ is the partition
$\tau|_k$. We fill this partition with bricks (using the rules above)
to record all possible ways of getting the monomial $\xx_{k*}^{\tau|_k}$
as part of the expression in~\eqref{eq:H-in-m-proof}.
For a given choice of $\lambda^{(1)},\ldots,\lambda^{(s)}$ indexing the 
summands in~\eqref{eq:H-in-m-proof} and for a given $k$, the part
of the expression involving the variables $\xx_{k*}$ is
\[ m_{\sigma|_k}(\xx_{k*}) h_{m_k(\lambda^{(1)})}(\xx^{r_1}_{k*})
 h_{m_k(\lambda^{(2)})}(\xx^{r_2}_{k*}) 
\ldots h_{m_k(\lambda^{(s)})}(\xx^{r_s}_{k*}).\] 
The result then follows as in the proof of \cref{prop:h-brick},
noting that raising the variables $\xx_{k*}$ to the power $r_i$ 
can be modeled by horizontally scaling $1\times 1$ bricks to become
bricks of length $r_i$.
\end{proof}

\begin{example}
Let $\tau = 3^23^22^41^31^31$, $\sigma = 2^21^21$ and $\delta = (8, 3^2,3^2)$. 
We compute the coefficient of $m^\otimes_\tau$ in $m^\otimes_\sigma H_\delta$ 
to be 24 as follows.
\begin{enumerate}
    \item Corresponding to the choice of partitions $(2,2,1,1,1,1)\vdash 8$, 
$(3)\vdash 3$ and $(3)\vdash 3$, we get 8 objects in 
$\HTBT(\tau,(\sigma;\delta))$. We list 4 objects below, 
and the remaining 4 are obtained by swapping 
the \raisebox{-3pt}{\scriptsize \begin{tikzpicture}[scale = 0.3]
\draw (0,0) rectangle (2,-1);
\draw (0.5,-0.5) node {2};
\draw (1.5,-0.5) node {2};
\end{tikzpicture}} and  
\raisebox{-3pt}{\scriptsize \begin{tikzpicture}[scale = 0.3]
\draw (0,0) rectangle (2,-1);
\draw (0.5,-0.5) node {3};
\draw (1.5,-0.5) node {3};
\end{tikzpicture}} in the third component diagram.

    \begin{center}
        \begin{tiny}
		\scalebox{0.9}{\begin{tikzpicture}[scale = 0.4]
\draw (0,0) rectangle (2,-1);
\draw (0.5,-0.5) node {0};
\draw (1.5,-0.5) node {0};
\draw (2,0) rectangle (3,-1);
\draw (2.5,-0.5) node {1};
\draw (0,-1) rectangle (1,-2);
\draw (0.5,-1.5) node {0};
\draw (1,-1) rectangle (2,-2);
\draw (1.5,-1.5) node {1};
\draw (2,-1) rectangle (3,-2);
\draw (2.5,-1.5) node {1};
\draw (0,-2) rectangle (1,-3);
\draw (0.5,-2.5) node {1};
\end{tikzpicture}
\space{}\raisebox{15pt}{$\otimes\:\:$}\raisebox{10pt}{\begin{tikzpicture}[scale = 0.4]
\draw (0,0) rectangle (2,-1);
\draw (0.5,-0.5) node {0};
\draw (1.5,-0.5) node {0};
\draw (2,0) rectangle (3,-1);
\draw (2.5,-0.5) node {1};
\draw (3,0) rectangle (4,-1);
\draw (3.5,-0.5) node {1};
\end{tikzpicture}
} \raisebox{15pt}{$\otimes\:\:$}\raisebox{7pt}{\begin{tikzpicture}[scale = 0.4]
\draw (0,0) rectangle (2,-1);
\draw (0.5,-0.5) node {2};
\draw (1.5,-0.5) node {2};
\draw (0,-1) rectangle (2,-2);
\draw (0.5,-1.5) node {3};
\draw (1.5,-1.5) node {3};
\end{tikzpicture}
}}
		\qquad\scalebox{0.9}{ \begin{tikzpicture}[scale = 0.4]
\draw (0,0) rectangle (2,-1);
\draw (0.5,-0.5) node {0};
\draw (1.5,-0.5) node {0};
\draw (2,0) rectangle (3,-1);
\draw (2.5,-0.5) node {1};
\draw (0,-1) rectangle (1,-2);
\draw (0.5,-1.5) node {1};
\draw (1,-1) rectangle (2,-2);
\draw (1.5,-1.5) node {1};
\draw (2,-1) rectangle (3,-2);
\draw (2.5,-1.5) node {1};
\draw (0,-2) rectangle (1,-3);
\draw (0.5,-2.5) node {0};
\end{tikzpicture}
\space{}\raisebox{15pt}{$\otimes\:\:$}\raisebox{10pt}{\begin{tikzpicture}[scale = 0.4]
\draw (0,0) rectangle (2,-1);
\draw (0.5,-0.5) node {0};
\draw (1.5,-0.5) node {0};
\draw (2,0) rectangle (3,-1);
\draw (2.5,-0.5) node {1};
\draw (3,0) rectangle (4,-1);
\draw (3.5,-0.5) node {1};
\end{tikzpicture}
} \raisebox{15pt}{$\otimes\:\:$}\raisebox{7pt}{\begin{tikzpicture}[scale = 0.4]
\draw (0,0) rectangle (2,-1);
\draw (0.5,-0.5) node {2};
\draw (1.5,-0.5) node {2};
\draw (0,-1) rectangle (2,-2);
\draw (0.5,-1.5) node {3};
\draw (1.5,-1.5) node {3};
\end{tikzpicture}
}}
		\qquad\scalebox{0.9}{ \begin{tikzpicture}[scale = 0.4]
\draw (0,0) rectangle (1,-1);
\draw (0.5,-0.5) node {0};
\draw (1,0) rectangle (2,-1);
\draw (1.5,-0.5) node {1};
\draw (2,0) rectangle (3,-1);
\draw (2.5,-0.5) node {1};
\draw (0,-1) rectangle (2,-2);
\draw (0.5,-1.5) node {0};
\draw (1.5,-1.5) node {0};
\draw (2,-1) rectangle (3,-2);
\draw (2.5,-1.5) node {1};
\draw (0,-2) rectangle (1,-3);
\draw (0.5,-2.5) node {1};
\end{tikzpicture}
\space{}\raisebox{15pt}{$\otimes\:\:$}\raisebox{10pt}{\begin{tikzpicture}[scale = 0.4]
\draw (0,0) rectangle (2,-1);
\draw (0.5,-0.5) node {0};
\draw (1.5,-0.5) node {0};
\draw (2,0) rectangle (3,-1);
\draw (2.5,-0.5) node {1};
\draw (3,0) rectangle (4,-1);
\draw (3.5,-0.5) node {1};
\end{tikzpicture}
} \raisebox{15pt}{$\otimes\:\:$}\raisebox{7pt}{\begin{tikzpicture}[scale = 0.4]
\draw (0,0) rectangle (2,-1);
\draw (0.5,-0.5) node {2};
\draw (1.5,-0.5) node {2};
\draw (0,-1) rectangle (2,-2);
\draw (0.5,-1.5) node {3};
\draw (1.5,-1.5) node {3};
\end{tikzpicture}
}}
		\\[0.1cm]\scalebox{0.9}{ \begin{tikzpicture}[scale = 0.4]
\draw (0,0) rectangle (1,-1);
\draw (0.5,-0.5) node {1};
\draw (1,0) rectangle (2,-1);
\draw (1.5,-0.5) node {1};
\draw (2,0) rectangle (3,-1);
\draw (2.5,-0.5) node {1};
\draw (0,-1) rectangle (2,-2);
\draw (0.5,-1.5) node {0};
\draw (1.5,-1.5) node {0};
\draw (2,-1) rectangle (3,-2);
\draw (2.5,-1.5) node {1};
\draw (0,-2) rectangle (1,-3);
\draw (0.5,-2.5) node {0};
\end{tikzpicture}
\space{}\raisebox{15pt}{$\otimes\:\:$}\raisebox{10pt}{\begin{tikzpicture}[scale = 0.4]
\draw (0,0) rectangle (2,-1);
\draw (0.5,-0.5) node {0};
\draw (1.5,-0.5) node {0};
\draw (2,0) rectangle (3,-1);
\draw (2.5,-0.5) node {1};
\draw (3,0) rectangle (4,-1);
\draw (3.5,-0.5) node {1};
\end{tikzpicture}
} \raisebox{15pt}{$\otimes\:\:$}\raisebox{7pt}{\begin{tikzpicture}[scale = 0.4]
\draw (0,0) rectangle (2,-1);
\draw (0.5,-0.5) node {2};
\draw (1.5,-0.5) node {2};
\draw (0,-1) rectangle (2,-2);
\draw (0.5,-1.5) node {3};
\draw (1.5,-1.5) node {3};
\end{tikzpicture}
}}
        \end{tiny}
    \end{center}

    \item Now, we make a choice of partitions $(3,3,1,1)\vdash 8$, $(2,1)\vdash 3$ and $(3)\vdash 3$ which again gives us 8 objects. We list four objects and the rest can be obtained by swapping \raisebox{-3pt}{\scriptsize\begin{tikzpicture}[scale = 0.3]
\draw (0,0) rectangle (1,-1);
\draw (0.5,-0.5) node {1};
\draw (1,0) rectangle (2,-1);
\draw (1.5,-0.5) node {1};
\end{tikzpicture}
} and \raisebox{-3pt}{\scriptsize\begin{tikzpicture}[scale = 0.3]
\draw (0,0) rectangle (2,-1);
\draw (0.5,-0.5) node {3};
\draw (1.5,-0.5) node {3};
\end{tikzpicture}
} in the third component diagram.
    \begin{center}
    \begin{tiny}
    \scalebox{0.9}{\begin{tikzpicture}[scale = 0.4]
\draw (0,0) rectangle (2,-1);
\draw (0.5,-0.5) node {0};
\draw (1.5,-0.5) node {0};
\draw (2,0) rectangle (3,-1);
\draw (2.5,-0.5) node {1};
\draw (0,-1) rectangle (1,-2);
\draw (0.5,-1.5) node {0};
\draw (1,-1) rectangle (3,-2);
\draw (1.5,-1.5) node {2};
\draw (2.5,-1.5) node {2};
\draw (0,-2) rectangle (1,-3);
\draw (0.5,-2.5) node {1};
\end{tikzpicture}
\space{}\raisebox{15pt}{$\otimes\:\:$}\raisebox{10pt}{\begin{tikzpicture}[scale = 0.4]
\draw (0,0) rectangle (2,-1);
\draw (0.5,-0.5) node {0};
\draw (1.5,-0.5) node {0};
\draw (2,0) rectangle (4,-1);
\draw (2.5,-0.5) node {2};
\draw (3.5,-0.5) node {2};
\end{tikzpicture}
} \raisebox{15pt}{$\otimes\:\:$}\raisebox{7pt}{\begin{tikzpicture}[scale = 0.4]
\draw (0,0) rectangle (2,-1);
\draw (0.5,-0.5) node {3};
\draw (1.5,-0.5) node {3};
\draw (0,-1) rectangle (1,-2);
\draw (0.5,-1.5) node {1};
\draw (1,-1) rectangle (2,-2);
\draw (1.5,-1.5) node {1};
\end{tikzpicture}
}}
		\qquad \scalebox{0.9}{\begin{tikzpicture}[scale = 0.4]
\draw (0,0) rectangle (2,-1);
\draw (0.5,-0.5) node {0};
\draw (1.5,-0.5) node {0};
\draw (2,0) rectangle (3,-1);
\draw (2.5,-0.5) node {1};
\draw (0,-1) rectangle (1,-2);
\draw (0.5,-1.5) node {1};
\draw (1,-1) rectangle (3,-2);
\draw (1.5,-1.5) node {2};
\draw (2.5,-1.5) node {2};
\draw (0,-2) rectangle (1,-3);
\draw (0.5,-2.5) node {0};
\end{tikzpicture}
\space{}\raisebox{15pt}{$\otimes\:\:$}\raisebox{10pt}{\begin{tikzpicture}[scale = 0.4]
\draw (0,0) rectangle (2,-1);
\draw (0.5,-0.5) node {0};
\draw (1.5,-0.5) node {0};
\draw (2,0) rectangle (4,-1);
\draw (2.5,-0.5) node {2};
\draw (3.5,-0.5) node {2};
\end{tikzpicture}
} \raisebox{15pt}{$\otimes\:\:$}\raisebox{7pt}{\begin{tikzpicture}[scale = 0.4]
\draw (0,0) rectangle (2,-1);
\draw (0.5,-0.5) node {3};
\draw (1.5,-0.5) node {3};
\draw (0,-1) rectangle (1,-2);
\draw (0.5,-1.5) node {1};
\draw (1,-1) rectangle (2,-2);
\draw (1.5,-1.5) node {1};
\end{tikzpicture}
}}
		\qquad \scalebox{0.9}{\begin{tikzpicture}[scale = 0.4]
\draw (0,0) rectangle (1,-1);
\draw (0.5,-0.5) node {0};
\draw (1,0) rectangle (3,-1);
\draw (1.5,-0.5) node {2};
\draw (2.5,-0.5) node {2};
\draw (0,-1) rectangle (2,-2);
\draw (0.5,-1.5) node {0};
\draw (1.5,-1.5) node {0};
\draw (2,-1) rectangle (3,-2);
\draw (2.5,-1.5) node {1};
\draw (0,-2) rectangle (1,-3);
\draw (0.5,-2.5) node {1};
\end{tikzpicture}
\space{}\raisebox{15pt}{$\otimes\:\:$}\raisebox{10pt}{\begin{tikzpicture}[scale = 0.4]
\draw (0,0) rectangle (2,-1);
\draw (0.5,-0.5) node {0};
\draw (1.5,-0.5) node {0};
\draw (2,0) rectangle (4,-1);
\draw (2.5,-0.5) node {2};
\draw (3.5,-0.5) node {2};
\end{tikzpicture}
} \raisebox{15pt}{$\otimes\:\:$}\raisebox{7pt}{\begin{tikzpicture}[scale = 0.4]
\draw (0,0) rectangle (2,-1);
\draw (0.5,-0.5) node {3};
\draw (1.5,-0.5) node {3};
\draw (0,-1) rectangle (1,-2);
\draw (0.5,-1.5) node {1};
\draw (1,-1) rectangle (2,-2);
\draw (1.5,-1.5) node {1};
\end{tikzpicture}
}}
		\\[0.1cm] \scalebox{0.9}{\begin{tikzpicture}[scale = 0.4]
\draw (0,0) rectangle (1,-1);
\draw (0.5,-0.5) node {1};
\draw (1,0) rectangle (3,-1);
\draw (1.5,-0.5) node {2};
\draw (2.5,-0.5) node {2};
\draw (0,-1) rectangle (2,-2);
\draw (0.5,-1.5) node {0};
\draw (1.5,-1.5) node {0};
\draw (2,-1) rectangle (3,-2);
\draw (2.5,-1.5) node {1};
\draw (0,-2) rectangle (1,-3);
\draw (0.5,-2.5) node {0};
\end{tikzpicture}
\space{}\raisebox{15pt}{$\otimes\:\:$}\raisebox{10pt}{\begin{tikzpicture}[scale = 0.4]
\draw (0,0) rectangle (2,-1);
\draw (0.5,-0.5) node {0};
\draw (1.5,-0.5) node {0};
\draw (2,0) rectangle (4,-1);
\draw (2.5,-0.5) node {2};
\draw (3.5,-0.5) node {2};
\end{tikzpicture}
} \raisebox{15pt}{$\otimes\:\:$}\raisebox{7pt}{\begin{tikzpicture}[scale = 0.4]
\draw (0,0) rectangle (2,-1);
\draw (0.5,-0.5) node {3};
\draw (1.5,-0.5) node {3};
\draw (0,-1) rectangle (1,-2);
\draw (0.5,-1.5) node {1};
\draw (1,-1) rectangle (2,-2);
\draw (1.5,-1.5) node {1};
\end{tikzpicture}
}}
   
    \end{tiny}
    \end{center}
    \item For the choice of partitions $(3,3,1,1)\vdash 8$, $(3)\vdash 3$ 
and $(2,1)\vdash 3$, we construct 8 objects as in part~(2) 
where the labels 2 and 3 are swapped.
\item It is routine to check that the choices of partitions in (1), (2),
and (3) are the only possibilities leading to collections of bricks
that can fill $\dg(\tau)$ following the rules for HTBTs.
\end{enumerate}
\end{example}

\begin{corollary}
For all types $\tau,\sigma\vdt n$, the coefficient of 
$m^\otimes_\tau$ in the $m^{\otimes}$-expansion of $H_\sigma$ is 
\[
a_{\tau\sigma} = \mcM(H, m^\otimes)_{\tau, \sigma} = |\HTBT(\tau, \sigma)|.
\]
\end{corollary}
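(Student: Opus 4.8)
The plan is to obtain this corollary as the special case of \cref{thm:H-in-m} in which the left multiplier $m^\otimes_\sigma$ is trivial. Concretely, I would apply \cref{thm:H-in-m} with the starting type taken to be the empty type $\varnothing$ and with the content sequence $\delta$ taken to be the list of blocks of $\sigma$. (A type is by definition a weakly decreasing sequence of blocks, so $\sigma$ is a legitimate choice of $\delta$.)

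First I would observe that $m^\otimes_{\varnothing}$ is an empty product, hence equals $1$, so that $m^\otimes_{\varnothing} H_\sigma = H_\sigma$. Applying \cref{thm:H-in-m} then gives that the coefficient of $m^\otimes_\tau$ in the $m^\otimes$-expansion of $H_\sigma$ is $|\HTBT(\tau,(\varnothing;\sigma))|$. Next I would invoke the notational convention established for these tableaux (matching the one recorded earlier for PTBTs): when the starting type is empty we abbreviate $\HTBT(\tau,(\varnothing;\sigma))$ as $\HTBT(\tau,\sigma)$ and call $\sigma$ the \emph{content}. This yields $\mcM(H,m^\otimes)_{\tau,\sigma} = |\HTBT(\tau,\sigma)|$. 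Finally, the equality with $a_{\tau\sigma}$ is immediate from the identification recorded at the start of \cref{subsec:Hm}, where $a_{\tau,\sigma}$ is defined to be exactly $\mcM(H,m^\otimes)_{\tau,\sigma}$.

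Since this is a direct specialization, there is no genuine obstacle; the only point requiring a moment's care is to confirm that the $\HTBT$ construction degenerates correctly when the starting type is empty. Specifically, $\ell(\varnothing|_k)=0$ for every $k$, so no $0$-labeled bricks are created, and every cell of $\dg(\tau)$ is then covered by positively labeled bricks arising from the chosen partitions $\lambda^{(1)},\ldots,\lambda^{(s)}$ of the block degrees of $\sigma$. With that check in place, the statement follows verbatim from the theorem.
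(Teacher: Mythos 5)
Your proposal is correct and matches the paper's (implicit) argument exactly: the corollary is the direct specialization of \cref{thm:H-in-m} obtained by taking the starting type empty and the content sequence $\delta$ equal to the blocks of $\sigma$, together with the identification $a_{\tau,\sigma}=\mcM(H,m^\otimes)_{\tau,\sigma}$ made at the start of \cref{subsec:Hm}. Your extra check that the $0$-labeled bricks disappear when the starting type is empty, and your note on the abbreviation $\HTBT(\tau,\sigma)$, are exactly the right points of care.
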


\subsection{Rules for $m^{\otimes}_{\sigma}E^+_{\delta}$,
  $m^{\otimes}_{\sigma}E_{\delta}$, $\mcM(E^+,m^{\otimes})$, 
  and $\mcM(E,m^{\otimes})$}
\label{subsec:Em}

In this section, we start by finding the
$m$-expansion of the symmetric polynomial $m_{\mu}e_{\alpha}$. 
We then use similar ideas to obtain the $m^\otimes$-expansions
of the polysymmetric functions $E_{\sigma}$ and $E^+_{\sigma}$.

Define an \emph{$e$-brick tabloid} to be an $h$-brick tabloid with the 
added condition that in each row, all bricks have distinct labels.

\begin{example}
The $e$-brick tabloids of shape $(4,4)$ with extended content $((2,1);(2,1,2))$
are
\begin{center}
\begin{tikzpicture}[scale = 0.4]
\draw (0,0) rectangle (1,-1);
\draw (0.5,-0.5) node {0};
\draw (1,0) rectangle (2,-1);
\draw (1.5,-0.5) node {1};
\draw (2,0) rectangle (3,-1);
\draw (2.5,-0.5) node {2};
\draw (3,0) rectangle (4,-1);
\draw (3.5,-0.5) node {3};
\draw (0,-1) rectangle (2,-2);
\draw (0.5,-1.5) node {0};
\draw (1.5,-1.5) node {0};
\draw (2,-1) rectangle (3,-2);
\draw (2.5,-1.5) node {1};
\draw (3,-1) rectangle (4,-2);
\draw (3.5,-1.5) node {3};
\end{tikzpicture} \quad \begin{tikzpicture}[scale = 0.4]
\draw (0,0) rectangle (2,-1);
\draw (0.5,-0.5) node {0};
\draw (1.5,-0.5) node {0};
\draw (2,0) rectangle (3,-1);
\draw (2.5,-0.5) node {1};
\draw (3,0) rectangle (4,-1);
\draw (3.5,-0.5) node {3};
\draw (0,-1) rectangle (1,-2);
\draw (0.5,-1.5) node {0};
\draw (1,-1) rectangle (2,-2);
\draw (1.5,-1.5) node {1};
\draw (2,-1) rectangle (3,-2);
\draw (2.5,-1.5) node {2};
\draw (3,-1) rectangle (4,-2);
\draw (3.5,-1.5) node {3};
\end{tikzpicture}
\end{center}
which are 2 of the 10 $h$-brick tabloids from \cref{example:h-in-m}.
\end{example}
\begin{prop}
Let $\lambda, \mu$ be partitions and $\alpha=(\alpha_1,\ldots,\alpha_s)$ 
be a sequence of positive integers. 
The coefficient of $m_\lambda$ in $m_\mu e_\alpha$ 
is the number of $e$-brick tabloids of shape $\lambda$ 
and extended content $(\mu;\alpha)$.
\end{prop}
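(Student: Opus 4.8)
The plan is to mirror the proof of \cref{prop:h-brick} almost verbatim, exploiting the fact that an $e$-brick tabloid is by definition an $h$-brick tabloid satisfying one extra constraint. First I would observe, exactly as in the $h$-case, that the coefficient of $m_\lambda$ in $m_\mu e_\alpha$ equals the coefficient of the monomial $\xx^{\lambda}$ in the polynomial $m_\mu(\xx)e_{\alpha_1}(\xx)\cdots e_{\alpha_s}(\xx)$, and that this coefficient counts the ordered factorizations $\xx^{\lambda}=f_0f_1\cdots f_s$ in which $f_0$ is a monomial of $m_\mu(\xx)$ and, for $1\le j\le s$, $f_j$ is a monomial of $e_{\alpha_j}(\xx)$. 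The one essential difference from the $h$-case lies in the nature of these factors: by the definition of $e_{\alpha_j}$, a monomial appearing in $e_{\alpha_j}(\xx)$ is precisely a \emph{squarefree} monomial of degree $\alpha_j$, that is, a product $x_{i_1}x_{i_2}\cdots x_{i_{\alpha_j}}$ with $i_1<i_2<\cdots<i_{\alpha_j}$.

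Next I would invoke the bijection already constructed in the proof of \cref{prop:h-brick}, which sends an $h$-brick tabloid $T$ to the factorization with $f_j=\prod_{i\ge 1}x_i^{n_{ij}(T)}$, where $n_{ij}(T)$ is the number of cells in row $i$ carrying the label $j$. Under this bijection, I claim that $e$-brick tabloids correspond exactly to the factorizations in which every $f_j$ with $j\ge 1$ is squarefree. Indeed, since the positively-labeled bricks are the $1\times 1$ bricks, the added condition defining an $e$-brick tabloid --- that all bricks in a given row have distinct labels --- is equivalent to $n_{ij}(T)\le 1$ for every row $i$ and every positive label $j$, which in turn says precisely that each $f_j$ is a squarefree monomial of degree $\alpha_j$, hence a monomial of $e_{\alpha_j}$.

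Thus the bijection of \cref{prop:h-brick} restricts to a bijection between $e$-brick tabloids of shape $\lambda$ and extended content $(\mu;\alpha)$ and the ordered factorizations $\xx^{\lambda}=f_0f_1\cdots f_s$ with $f_0$ a monomial of $m_\mu$ and each $f_j$ a monomial of $e_{\alpha_j}$. As the number of the latter is exactly the coefficient computed in the first step, the proposition follows. I do not expect any genuine obstacle here, since this is a routine adaptation; the only point requiring care is the bookkeeping verification that ``distinct labels in each row'' matches ``each $f_j$ squarefree'' while imposing no hidden further constraint on the $0$-labeled bricks --- the requirement that at most one $0$-brick occur per row is already part of the $h$-brick tabloid definition, so the $e$-condition adds nothing new for $f_0$. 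I would state this matching explicitly rather than leave it implicit.
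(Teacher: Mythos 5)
Your proposal is correct and takes essentially the same approach as the paper: both reduce the coefficient to counting ordered factorizations $\xx^{\lambda}=f_0f_1\cdots f_s$ with $f_0$ from $m_\mu(\xx)$ and $f_j$ from $e_{\alpha_j}(\xx)$, and both observe that the restriction of the $h$-brick tabloid bijection works because the distinct-labels-per-row condition corresponds exactly to each $f_j$ ($j\geq 1$) being square-free. Your explicit remark that the $e$-condition imposes nothing new on the $0$-labeled bricks is a nice touch of care that the paper leaves implicit.
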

\begin{proof}
We need to find the coefficient of $\xx^\lambda$ in $m_\mu(\xx)e_\alpha(\xx)$. 
This is the number of ordered square-free factorizations
of $\xx^\lambda$, which have the form 
$\xx^{\lambda}=f_0 f_1 \ldots f_s$ where $f_0$ 
is a monomial appearing in $m_\mu(\xx)$ and $f_j$ is a monomial 
appearing in $e_{\alpha_j}(\xx)$ for $j = 1, 2, \ldots, s$.
We proceed similarly to the proof of \cref{prop:h-brick}.  For each $j>0$,
the condition that each row has at most one brick labeled $j$
ensures that $f_j$ is a square-free monomial of degree $\alpha_j$ 
and thus appears in $e_{\alpha_j}(\xx)$. 
The factor $f_0 =\prod\limits_{i\geq 1} x_i^{r_i}$ is recorded in the
brick tabloid by putting a brick of length $r_i$ with label 0 in row $i$. 
For $j\geq 1$, if $f_j = x_{i_1}x_{i_2}\ldots x_{i_{\alpha_j}}$, then we 
put one brick labeled $j$ in each row $i_1$, $i_2$, $\ldots$, $i_{\alpha_j}$. 
This gives us the $e$-brick tabloid recording the given square-free
factorization of $\xx^{\lambda}$.
\end{proof}

To get the analogue of \cref{thm:H-in-m}, 
we define \emph{$E$-tensor brick tabloids} 
(ETBTs) of shape $\tau$ and extended content $(\sigma;\delta)$, 
where $\tau$ and $\sigma$ are types and 
$\delta=(d_1^{r_1}, \ldots, d_k^{r_k})$ is a sequence of blocks. 
To build such an ETBT, say $T$,
first choose partitions $\lambda^{(i)}$ of $d_i$. For $k\geq 1$, 
the $k$th component of $\dg(\tau)$ is filled as follows.
\begin{itemize}
\item Make $m_k(\lambda^{(i)})$ bricks of length $r_i$ and height $1$, 
each with label $i$. Make $\ell(\sigma|_k)$ bricks of height 1 and label 0
with lengths corresponding to the parts of $\sigma|_k$. 
\item Cover $\dg(\tau|_k)$ with these bricks subject to the 
condition that brick labels increase strictly in each row.
\end{itemize}
Define the \emph{sign} of the $E$-tensor brick tabloid thus constructed 
to be $\sgn(T) = \prod\limits_{i=1}^k (-1)^{\ell(\lambda^{(i)})}$. 
Denote the set of such objects by $\ETBT(\tau,(\sigma;\delta))$. 
The power of $-1$ in $\sgn(T)$ is the total number of bricks in $T$
with a positive label.

\begin{theorem}\label{thm:E-in-m}
Let $\tau$ and $\sigma$ be types and $\delta$ be a sequence of blocks.
\\ (a) The coefficient of $m^\otimes_\tau$ in the $m^{\otimes}$-expansion
of $m^\otimes_\sigma E^+_\delta$ is $\sum\limits_{T\in \ETBT(\tau,(\sigma;\delta))} 1 = |\ETBT(\tau,(\sigma;\delta))|$.
\\ (b) The coefficient of $m^\otimes_\tau$ in the $m^{\otimes}$-expansion
of $m^\otimes_\sigma E_\delta$ is $\sum\limits_{T\in \ETBT(\tau,(\sigma;\delta))} \sgn(T)$.
\end{theorem}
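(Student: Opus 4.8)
The plan is to mirror the proof of \cref{thm:H-in-m}, replacing the $H$-expansion of \cref{prop:H-expansions}(b) by the $E^+$- and $E$-expansions of \cref{prop:E-expansions}(c) and (d), and replacing $h$-brick tabloids by $e$-brick tabloids. First I would use \cref{prop:E-expansions}(c) together with the Monomial Substitution Rule ($e_n[p_r]=e_n(\xx^r)$) and distributivity to write
\[
 m_{\sigma}^{\otimes}E^+_{\delta}
  =\sum_{\lambda^{(1)}\vdash d_1}\cdots
   \sum_{\lambda^{(s)}\vdash d_s}
   \prod_{k\geq 1}\left[m_{\sigma|_k}(\xx_{k*})
   \prod_{i=1}^s e_{m_k(\lambda^{(i)})}(\xx_{k*}^{r_i})\right],
\]
and similarly for $E_{\delta}$, carrying the additional global sign $\prod_{i=1}^s(-1)^{\ell(\lambda^{(i)})}$ supplied by \cref{prop:E-expansions}(d).

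Next, since $m_{\tau}^{\otimes}=\prod_{k\geq 1}m_{\tau|_k}(\xx_{k*})$ and the variable sets $\xx_{k*}$ are disjoint across $k$, the coefficient of $m_{\tau}^{\otimes}$ equals the coefficient of the monomial $\xx^{\tau}=\prod_{k\geq 1}\xx_{k*}^{\tau|_k}$, which factors as a product over $k$ of the coefficient of $\xx_{k*}^{\tau|_k}$ in $m_{\sigma|_k}(\xx_{k*})\prod_{i=1}^s e_{m_k(\lambda^{(i)})}(\xx_{k*}^{r_i})$. For a fixed tuple $(\lambda^{(1)},\ldots,\lambda^{(s)})$, this per-component coefficient is computed by the $e$-brick tabloid formula for $m_{\mu}e_{\alpha}$ proved above, where the plethystic substitution $\xx_{k*}\mapsto\xx_{k*}^{r_i}$ is modeled exactly as in \cref{thm:H-in-m} by horizontally scaling each $1\times 1$ brick labeled $i$ into a brick of length $r_i$. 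Taking the product over all $k\geq 1$ and summing over all admissible tuples assembles precisely the fillings in $\ETBT(\tau,(\sigma;\delta))$: the choice of $\lambda^{(i)}$ dictates that $m_k(\lambda^{(i)})$ bricks of length $r_i$ and label $i$ are placed in component $k$, the strict-increase condition in each row encodes the square-freeness of each factor $f_i\in e_{\alpha_i}(\xx)$, and the bricks labeled $0$ record the monomial chosen from $m_{\sigma|_k}$. This yields part~(a), with every object counted with weight $1$.

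For part~(b), I would run the identical bookkeeping but carry along the sign $\prod_{i=1}^s(-1)^{\ell(\lambda^{(i)})}$ from \cref{prop:E-expansions}(d). Since an ETBT $T$ is built from exactly this tuple $(\lambda^{(1)},\ldots,\lambda^{(s)})$, that global sign is by definition $\sgn(T)$, so each filling now contributes $\sgn(T)$ in place of $1$, giving the signed sum in the statement. I expect the only genuine subtlety---and thus the step to state carefully---is the same one arising in \cref{thm:H-in-m}: verifying that raising $\xx_{k*}$ to the $r_i$th power corresponds bijectively to using length-$r_i$ bricks, so that the $e$-brick tabloid count with unit bricks transports faithfully to the scaled-brick count defining ETBTs. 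Once this plethysm-as-scaling dictionary is in place, both parts follow by the same coefficient-extraction argument, with (b) differing from (a) only in the attached signs.
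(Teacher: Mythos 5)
Your proposal is correct and follows essentially the same route as the paper: both derive the analogue of the $H$-expansion identity from \cref{prop:E-expansions}(c)--(d), extract the coefficient of $\xx^{\tau}$ component-by-component over the disjoint variable sets $\xx_{k*}$ using the $e$-brick tabloid argument with bricks horizontally scaled by $r_i$ to model the plethystic substitution, and obtain part~(b) by carrying the sign $\prod_{i=1}^s(-1)^{\ell(\lambda^{(i)})}$, which is $\sgn(T)$ by definition. No gaps.
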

\begin{proof}
We adapt the proof of \cref{thm:H-in-m}. 
For~(a), Equation~\eqref{eq:H-in-m-proof} becomes
\begin{equation}\label{eq:E-in-m-proof}
 m_{\sigma}^{\otimes}E^+_{\delta}
  =\sum_{\lambda^{(1)}\vdash d_1}\cdots
   \sum_{\lambda^{(s)}\vdash d_s} 
   \prod_{k\geq 1}\left[m_{\sigma|_k}(\xx_{k*})
   \prod_{i=1}^s e_{m_k(\lambda^{(i)})}(\xx_{k*}^{r_i})\right]. 
\end{equation}
The part of this expression involving the variables $\xx_{k*}$ is
\[ m_{\sigma|_k}(\xx_{k*}) e_{m_k(\lambda^{(1)})}(\xx^{r_1}_{k*})
 e_{m_k(\lambda^{(2)})}(\xx^{r_2}_{k*}) 
\ldots e_{m_k(\lambda^{(s)})}(\xx^{r_s}_{k*}).\] 
Choosing monomials from these factors corresponds to filling $\dg(\tau|_k)$ 
with bricks according to the rules in the definition of ETBTs.
In particular, brick labels strictly increase in each row since
the monomials in $e_m(\xx_{k*})$ are square-free.

Part~(b) is proved similarly, but now the right side
of~\eqref{eq:E-in-m-proof} includes the sign factor
$\prod_{i=1}^s (-1)^{\ell(\lambda^{(i)})}$ for the summand
indexed by $\lambda^{(1)},\ldots,\lambda^{(s)}$. 
This sign equals $\sgn(T)$ for any ETBT $T$ built from this
choice of the partitions $\lambda^{(i)}$.
\end{proof}

\begin{example}
Let $\sigma  = 1^{(2,1)}$, 
$\tau = 2^{2,1,1}1^{5,2,1}$ and $\delta = (5^1,3^2,2^1)$. 
Via the objects below, we find that
the coefficient of $m^\otimes_\tau$ in the $m^{\otimes}$-expansion of 
$m^\otimes_\sigma E^+_\delta$ is $7$, while the coefficient 
of $m^\otimes_\sigma E_\delta$ is $-7$. 
\begin{enumerate}
\item We first choose the partitions $(2,2,1)\vdash 5$, $(2,1)\vdash 3$ 
and $(1,1)\vdash 2$. Then we construct the four ETBTs shown below. 
Note that certain configurations that give valid
HTBTs are not possible in the setting of ETBTs.
\begin{center}
\begin{scriptsize}
\scalebox{0.9}{\begin{tikzpicture}[scale = 0.5]
\draw (0,0) rectangle (2,-1);
\draw (0.5,-0.5) node {0};
\draw (1.5,-0.5) node {0};
\draw (2,0) rectangle (3,-1);
\draw (2.5,-0.5) node {1};
\draw (3,0) rectangle (5,-1);
\draw (3.5,-0.5) node {2};
\draw (4.5,-0.5) node {2};
\draw (0,-1) rectangle (1,-2);
\draw (0.5,-1.5) node {0};
\draw (1,-1) rectangle (2,-2);
\draw (1.5,-1.5) node {3};
\draw (0,-2) rectangle (1,-3);
\draw (0.5,-2.5) node {3};
\end{tikzpicture}
\space{}\raisebox{15pt}{{\normalsize $\otimes\:\:$}}{\begin{tikzpicture}[scale = 0.5]
\draw (0,0) rectangle (2,-1);
\draw (0.5,-0.5) node {2};
\draw (1.5,-0.5) node {2};
\draw (0,-1) rectangle (1,-2);
\draw (0.5,-1.5) node {1};
\draw (0,-2) rectangle (1,-3);
\draw (0.5,-2.5) node {1};
\end{tikzpicture}
}}
 \qquad \scalebox{0.9}{\begin{tikzpicture}[scale = 0.5]
\draw (0,0) rectangle (2,-1);
\draw (0.5,-0.5) node {0};
\draw (1.5,-0.5) node {0};
\draw (2,0) rectangle (4,-1);
\draw (2.5,-0.5) node {2};
\draw (3.5,-0.5) node {2};
\draw (4,0) rectangle (5,-1);
\draw (4.5,-0.5) node {3};
\draw (0,-1) rectangle (1,-2);
\draw (0.5,-1.5) node {0};
\draw (1,-1) rectangle (2,-2);
\draw (1.5,-1.5) node {3};
\draw (0,-2) rectangle (1,-3);
\draw (0.5,-2.5) node {1};
\end{tikzpicture}
\space{}\raisebox{15pt}{{\normalsize $\otimes\:\:$}}{\begin{tikzpicture}[scale = 0.5]
\draw (0,0) rectangle (2,-1);
\draw (0.5,-0.5) node {2};
\draw (1.5,-0.5) node {2};
\draw (0,-1) rectangle (1,-2);
\draw (0.5,-1.5) node {1};
\draw (0,-2) rectangle (1,-3);
\draw (0.5,-2.5) node {1};
\end{tikzpicture}
}}
 \qquad \scalebox{0.9}{\begin{tikzpicture}[scale = 0.5]
\draw (0,0) rectangle (1,-1);
\draw (0.5,-0.5) node {0};
\draw (1,0) rectangle (2,-1);
\draw (1.5,-0.5) node {1};
\draw (2,0) rectangle (4,-1);
\draw (2.5,-0.5) node {2};
\draw (3.5,-0.5) node {2};
\draw (4,0) rectangle (5,-1);
\draw (4.5,-0.5) node {3};
\draw (0,-1) rectangle (2,-2);
\draw (0.5,-1.5) node {0};
\draw (1.5,-1.5) node {0};
\draw (0,-2) rectangle (1,-3);
\draw (0.5,-2.5) node {3};
\end{tikzpicture}
\space{}\raisebox{15pt}{{\normalsize $\otimes\:\:$}}{\begin{tikzpicture}[scale = 0.5]
\draw (0,0) rectangle (2,-1);
\draw (0.5,-0.5) node {2};
\draw (1.5,-0.5) node {2};
\draw (0,-1) rectangle (1,-2);
\draw (0.5,-1.5) node {1};
\draw (0,-2) rectangle (1,-3);
\draw (0.5,-2.5) node {1};
\end{tikzpicture}
}} \\[0.1cm] \scalebox{0.9}{\begin{tikzpicture}[scale = 0.5]
\draw (0,0) rectangle (2,-1);
\draw (0.5,-0.5) node {0};
\draw (1.5,-0.5) node {0};
\draw (2,0) rectangle (4,-1);
\draw (2.5,-0.5) node {2};
\draw (3.5,-0.5) node {2};
\draw (4,0) rectangle (5,-1);
\draw (4.5,-0.5) node {3};
\draw (0,-1) rectangle (1,-2);
\draw (0.5,-1.5) node {0};
\draw (1,-1) rectangle (2,-2);
\draw (1.5,-1.5) node {1};
\draw (0,-2) rectangle (1,-3);
\draw (0.5,-2.5) node {3};
\end{tikzpicture}
\space{}\raisebox{15pt}{{\normalsize $\otimes\:\:$}}{\begin{tikzpicture}[scale = 0.5]
\draw (0,0) rectangle (2,-1);
\draw (0.5,-0.5) node {2};
\draw (1.5,-0.5) node {2};
\draw (0,-1) rectangle (1,-2);
\draw (0.5,-1.5) node {1};
\draw (0,-2) rectangle (1,-3);
\draw (0.5,-2.5) node {1};
\end{tikzpicture}
}}\qquad
\scalebox{0.9}{\begin{tikzpicture}[scale = 0.5]
\draw (0,0) rectangle (2,-1);
\draw (0.5,-0.5) node {0};
\draw (1.5,-0.5) node {0};
\draw (2,0) rectangle (4,-1);
\draw (2.5,-0.5) node {2};
\draw (3.5,-0.5) node {2};
\draw (4,0) rectangle (5,-1);
\draw (4.5,-0.5) node {3};
\draw (0,-1) rectangle (1,-2);
\draw (0.5,-1.5) node {1};
\draw (1,-1) rectangle (2,-2);
\draw (1.5,-1.5) node {3};
\draw (0,-2) rectangle (1,-3);
\draw (0.5,-2.5) node {0};
\end{tikzpicture}
\space{}\raisebox{15pt}{{\normalsize $\otimes\:\:$}}{\begin{tikzpicture}[scale = 0.5]
\draw (0,0) rectangle (2,-1);
\draw (0.5,-0.5) node {2};
\draw (1.5,-0.5) node {2};
\draw (0,-1) rectangle (1,-2);
\draw (0.5,-1.5) node {1};
\draw (0,-2) rectangle (1,-3);
\draw (0.5,-2.5) node {1};
\end{tikzpicture}
}}
\end{scriptsize}
\end{center}
All these ETBTs have the same sign, namely $(-1)^{3+2+2} = -1$.
\item We now choose a different set of partitions $(2,1,1,1)\vdash 5$, $(2,1)\vdash 3$ and $(2)\vdash 2$. This gives us the two ETBTs shown below.
\begin{center}
\begin{scriptsize}
\scalebox{0.9}{\begin{tikzpicture}[scale = 0.5]
\draw (0,0) rectangle (2,-1);
\draw (0.5,-0.5) node {0};
\draw (1.5,-0.5) node {0};
\draw (2,0) rectangle (3,-1);
\draw (2.5,-0.5) node {1};
\draw (3,0) rectangle (5,-1);
\draw (3.5,-0.5) node {2};
\draw (4.5,-0.5) node {2};
\draw (0,-1) rectangle (1,-2);
\draw (0.5,-1.5) node {0};
\draw (1,-1) rectangle (2,-2);
\draw (1.5,-1.5) node {1};
\draw (0,-2) rectangle (1,-3);
\draw (0.5,-2.5) node {1};
\end{tikzpicture}
\space{}\raisebox{15pt}{{\normalsize $\otimes\:\:$}}{\begin{tikzpicture}[scale = 0.5]
\draw (0,0) rectangle (2,-1);
\draw (0.5,-0.5) node {2};
\draw (1.5,-0.5) node {2};
\draw (0,-1) rectangle (1,-2);
\draw (0.5,-1.5) node {1};
\draw (0,-2) rectangle (1,-3);
\draw (0.5,-2.5) node {3};
\end{tikzpicture}
}}
 \qquad \scalebox{0.9}{\begin{tikzpicture}[scale = 0.5]
\draw (0,0) rectangle (2,-1);
\draw (0.5,-0.5) node {0};
\draw (1.5,-0.5) node {0};
\draw (2,0) rectangle (3,-1);
\draw (2.5,-0.5) node {1};
\draw (3,0) rectangle (5,-1);
\draw (3.5,-0.5) node {2};
\draw (4.5,-0.5) node {2};
\draw (0,-1) rectangle (1,-2);
\draw (0.5,-1.5) node {0};
\draw (1,-1) rectangle (2,-2);
\draw (1.5,-1.5) node {1};
\draw (0,-2) rectangle (1,-3);
\draw (0.5,-2.5) node {1};
\end{tikzpicture}
\space{}\raisebox{15pt}{{\normalsize $\otimes\:\:$}}{\begin{tikzpicture}[scale = 0.5]
\draw (0,0) rectangle (2,-1);
\draw (0.5,-0.5) node {2};
\draw (1.5,-0.5) node {2};
\draw (0,-1) rectangle (1,-2);
\draw (0.5,-1.5) node {3};
\draw (0,-2) rectangle (1,-3);
\draw (0.5,-2.5) node {1};
\end{tikzpicture}
}}
\end{scriptsize}
\end{center}
all with the sign $(-1)^{4 + 2 + 1} = -1$.
\item It is routine to check that no other choices of partitions
lead to brick collections that can fill $\dg(\tau)$ following the 
rules for ETBTs.
\end{enumerate}
\end{example}

\begin{corollary}
 (a) For all $\tau,\sigma\vdt n$, the coefficient of 
$m^\otimes_\tau$ in the $m^{\otimes}$-expansion of $E^+_\sigma$ is 
$$\mcM(E^+, m^\otimes)_{\tau, \sigma} = |\ETBT(\tau, \sigma)|.$$
\\ (b) For all $\tau,\sigma\vdt n$, the coefficient of
$m^\otimes_\tau$ in the $m^{\otimes}$-expansion of $E_\sigma$ is 
$$\mcM(E, m^\otimes)_{\tau, \sigma} = 
\sum\limits_{T\in \ETBT(\tau,(\sigma;\delta))} \sgn(T).$$ 
\end{corollary}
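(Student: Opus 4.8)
The plan is to obtain both parts of the corollary as the special case of \cref{thm:E-in-m} in which the starting type is empty. Since the empty product of monomial symmetric functions is $1$, we have $m^\otimes_\varnothing = 1$, and hence $E^+_\sigma = m^\otimes_\varnothing \cdot E^+_\sigma$ and $E_\sigma = m^\otimes_\varnothing \cdot E_\sigma$. Viewing the type $\sigma = (d_1^{m_1},\ldots,d_s^{m_s})$ as the ordered sequence of blocks $\delta = \sigma$, listed in its canonical weakly decreasing order, I would apply \cref{thm:E-in-m} with this $\delta$ and with the empty type in place of the theorem's starting type $\sigma$.

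First I would record the abbreviation, mirroring the one introduced for $\PTBT$: when the starting type is empty we write $\ETBT(\tau,\sigma)$ in place of $\ETBT(\tau,(\varnothing;\sigma))$, and call $\sigma$ the content. With an empty starting type no bricks are labeled $0$, so an object of $\ETBT(\tau,\sigma)$ consists solely of a choice of partitions $\lambda^{(i)}\vdash d_i$ for $1 \le i \le s$ together with an admissible covering of $\dg(\tau)$ by the positively-labeled bricks they produce. Part~(a) of \cref{thm:E-in-m} then identifies the coefficient of $m^\otimes_\tau$ in $1\cdot E^+_\sigma$ with $|\ETBT(\tau,\sigma)|$, which by the definition of transition matrices in \eqref{eq:trans-mat2} is precisely $\mcM(E^+,m^\otimes)_{\tau,\sigma}$.

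For part~(b), I would invoke part~(b) of \cref{thm:E-in-m} under the same specialization, giving the coefficient of $m^\otimes_\tau$ in $E_\sigma$ as $\sum_{T\in\ETBT(\tau,\sigma)}\sgn(T)$ with $\sgn(T)=\prod_{i=1}^s(-1)^{\ell(\lambda^{(i)})}$. Because there are no $0$-labeled bricks, every brick of $T$ now carries a positive label, so the exponent of $-1$ in $\sgn(T)$ equals the total brick count of $T$; this simplification needs no extra argument.

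There is no genuine obstacle, since the corollary is a direct specialization; the one point worth a sentence is that the two displayed entries are functions of $\tau$ and $\sigma$ alone. The products $E^+_\sigma$ and $E_\sigma$ consist of commuting factors indexed by the blocks of $\sigma$, so although \cref{thm:E-in-m} is phrased for an ordered content sequence, any reordering induces a label-permuting bijection on the relevant tabloid set that preserves both cardinality and $\sgn$, the latter because $\prod_i(-1)^{\ell(\lambda^{(i)})}$ is insensitive to the order of its factors. Using the canonical block-ordering of $\sigma$ therefore yields well-defined quantities, completing the deduction.
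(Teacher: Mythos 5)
Your proposal is correct and matches the paper's (implicit) argument: the corollary is exactly the specialization of \cref{thm:E-in-m} to an empty starting type, with $m^\otimes_\varnothing = 1$ and content $\delta = \sigma$, just as the analogous corollaries elsewhere in the paper are obtained. Your added remarks on the $\ETBT(\tau,\sigma)$ abbreviation and on independence from the ordering of the blocks of $\sigma$ are harmless refinements of the same route.
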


\section{Appendix: Sample Transition Matrices}
\label{sec:data}

Below we give the transition matrices computed in this paper for bases
of $\PLambda^4$. For example, the column marked $1^{22}$ in
$\mcM(P,s^{\otimes})$ tells us that
\[ P_{1^{22}}=1s^{\otimes}_{1^4}-1s^{\otimes}_{1^{31}}
   +2s^{\otimes}_{1^{22}}-1s^{\otimes}_{1^{211}}
   +1s^{\otimes}_{1^{1111}}. \]

\begin{footnotesize} 
{\let\quad\thinspace 
\begin{align*}
&\mcM(P,s^\otimes) &\mcM(H,s^\otimes)\\
&\bbmatrix{
~ & 1^{4} & 1^{31} & 1^{22} & 1^{211} & 1^{1111} & 2^{1}1^{2} & 2^{1}1^{11} & 3^{1}1^{1} & 2^{2} & 2^{11} & 4^{1} \cr
1^{4} & 1 & 1 & 1 & 1 & 1 & 1 & 1 & 1 & 1 & 1 & 1 \cr
1^{31} & -1 & 0 & -1 & 1 & 3 & -1 & 1 & 0 & -1 & -1 & -1 \cr
1^{22} & 0 & -1 & 2 & 0 & 2 & 2 & 0 & -1 & 0 & 2 & 0 \cr
1^{211} & 1 & 0 & -1 & -1 & 3 & -1 & -1 & 0 & 1 & -1 & 1 \cr
1^{1111} & -1 & 1 & 1 & -1 & 1 & 1 & -1 & 1 & -1 & 1 & -1 \cr
2^{1}1^{2} & 0 & 0 & 0 & 0 & 0 & 2 & 2 & 0 & 0 & 4 & 0 \cr
2^{1}1^{11} & 0 & 0 & 0 & 0 & 0 & -2 & 2 & 0 & 0 & -4 & 0 \cr
3^{1}1^{1} & 0 & 0 & 0 & 0 & 0 & 0 & 0 & 3 & 0 & 0 & 0 \cr
2^{2} & 0 & 0 & 0 & 0 & 0 & 0 & 0 & 0 & 2 & 4 & 2 \cr
2^{11} & 0 & 0 & 0 & 0 & 0 & 0 & 0 & 0 & -2 & 4 & -2 \cr
4^{1} & 0 & 0 & 0 & 0 & 0 & 0 & 0 & 0 & 0 & 0 & 4 \cr
}
 &\qquad
\bbmatrix{ %
~ & 1^4 & 1^{31} & 1^{22} & 1^{211} & 1^{1111}
 & 2^11^2 & 2^11^{11} & 3^11^1 & 2^2 & 2^{11} & 4^1 \cr
1^4 & 1 & 1 & 1 & 1 & 1 & 1 & 1 & 1 & 1 & 1 & 1 \cr
1^{31} & -1 & 0 & -1 & 1 & 3 & 0 & 2 & 1 & -1 & 1 & 0 \cr
1^{22} & 0 & -1 & 2 & 0 & 2 & 1 & 1 & 0 & 1 & 1 & 0 \cr
1^{211} & 1 & 0 & -1 & -1 & 3 & -1 & 1 & 0 & 0 & 0 & 0 \cr
1^{1111} & -1 & 1 & 1 & -1 & 1 & 0 & 0 & 0 & 0 & 0 & 0 \cr
2^11^2 & 0 & 0 & 0 & 0 & 0 & 1 & 1 & 1 & 0 & 2 & 1 \cr
2^11^{11} & 0 & 0 & 0 & 0 & 0 & -1 & 1 & 1 & 0 & 0 & 0 \cr
3^11^1 & 0 & 0 & 0 & 0 & 0 & 0 & 0 & 1 & 0 & 0 & 1 \cr
2^2 & 0 & 0 & 0 & 0 & 0 & 0 & 0 & 0 & 1 & 1 & 1 \cr
2^{11} & 0 & 0 & 0 & 0 & 0 & 0 & 0 & 0 & -1 & 1 & 0 \cr
4^1 & 0 & 0 & 0 & 0 & 0 & 0 & 0 & 0 & 0 & 0 & 1 \cr
} \end{align*} }
\end{footnotesize}

\begin{footnotesize} 
{\let\quad\thinspace 
\begin{align*}
&\mcM(E^+,s^\otimes) &\mcM(E,s^\otimes)\\
&\bbmatrix{
~ & 1^{4} & 1^{31} & 1^{22} & 1^{211} & 1^{1111} & 2^{1}1^{2} & 2^{1}1^{11} & 3^{1}1^{1} & 2^{2} & 2^{11} & 4^{1} \cr
1^{4} & 1 & 1 & 1 & 1 & 1 & 0 & 0 & 0 & 0 & 0 & 0 \cr
1^{31} & -1 & 0 & -1 & 1 & 3 & 1 & 1 & 0 & 0 & 0 & 0 \cr
1^{22} & 0 & -1 & 2 & 0 & 2 & -1 & 1 & 0 & 1 & 1 & 0 \cr
1^{211} & 1 & 0 & -1 & -1 & 3 & 0 & 2 & 1 & -1 & 1 & 0 \cr
1^{1111} & -1 & 1 & 1 & -1 & 1 & -1 & 1 & 1 & 1 & 1 & 1 \cr
2^{1}1^{2} & 0 & 0 & 0 & 0 & 0 & 1 & 1 & 1 & 0 & 0 & 0 \cr
2^{1}1^{11} & 0 & 0 & 0 & 0 & 0 & -1 & 1 & 1 & 0 & 2 & 1 \cr
3^{1}1^{1} & 0 & 0 & 0 & 0 & 0 & 0 & 0 & 1 & 0 & 0 & 1 \cr
2^{2} & 0 & 0 & 0 & 0 & 0 & 0 & 0 & 0 & 1 & 1 & 0 \cr
2^{11} & 0 & 0 & 0 & 0 & 0 & 0 & 0 & 0 & -1 & 1 & 1 \cr
4^{1} & 0 & 0 & 0 & 0 & 0 & 0 & 0 & 0 & 0 & 0 & 1 \cr
}
 &\qquad
\bbmatrix{
~ & 1^{4} & 1^{31} & 1^{22} & 1^{211} & 1^{1111} & 2^{1}1^{2} & 2^{1}1^{11} & 3^{1}1^{1} & 2^{2} & 2^{11} & 4^{1} \cr
1^{4} & -1 & 1 & 1 & -1 & 1 & 0 & 0 & 0 & 0 & 0 & 0 \cr
1^{31} & 1 & 0 & -1 & -1 & 3 & -1 & 1 & 0 & 0 & 0 & 0 \cr
1^{22} & 0 & -1 & 2 & 0 & 2 & 1 & 1 & 0 & 1 & 1 & 0 \cr
1^{211} & -1 & 0 & -1 & 1 & 3 & 0 & 2 & 1 & -1 & 1 & 0 \cr
1^{1111} & 1 & 1 & 1 & 1 & 1 & 1 & 1 & 1 & 1 & 1 & 1 \cr
2^{1}1^{2} & 0 & 0 & 0 & 0 & 0 & 1 & -1 & -1 & 0 & 0 & 0 \cr
2^{1}1^{11} & 0 & 0 & 0 & 0 & 0 & -1 & -1 & -1 & 0 & -2 & -1 \cr
3^{1}1^{1} & 0 & 0 & 0 & 0 & 0 & 0 & 0 & 1 & 0 & 0 & 1 \cr
2^{2} & 0 & 0 & 0 & 0 & 0 & 0 & 0 & 0 & -1 & 1 & 0 \cr
2^{11} & 0 & 0 & 0 & 0 & 0 & 0 & 0 & 0 & 1 & 1 & 1 \cr
4^{1} & 0 & 0 & 0 & 0 & 0 & 0 & 0 & 0 & 0 & 0 & -1 \cr
}
 \end{align*} }
\end{footnotesize}

\begin{footnotesize} 
{\let\quad\thinspace 
\begin{align*}
&\mcM(P,p^\otimes) &\mcM(H,p^\otimes)\\
&\bbmatrix{
~ & 1^{4} & 1^{31} & 1^{22} & 1^{211} & 1^{1111} & 2^{1}1^{2} & 2^{1}1^{11} & 3^{1}1^{1} & 2^{2} & 2^{11} & 4^{1} \cr
1^{4} & 1 & 0 & 0 & 0 & 0 & 0 & 0 & 0 & 1 & 0 & 1 \cr
1^{31} & 0 & 1 & 0 & 0 & 0 & 0 & 0 & 1 & 0 & 0 & 0 \cr
1^{22} & 0 & 0 & 1 & 0 & 0 & 1 & 0 & 0 & 0 & 1 & 0 \cr
1^{211} & 0 & 0 & 0 & 1 & 0 & 0 & 1 & 0 & 0 & 0 & 0 \cr
1^{1111} & 0 & 0 & 0 & 0 & 1 & 0 & 0 & 0 & 0 & 0 & 0 \cr
2^{1}1^{2} & 0 & 0 & 0 & 0 & 0 & 2 & 0 & 0 & 0 & 4 & 0 \cr
2^{1}1^{11} & 0 & 0 & 0 & 0 & 0 & 0 & 2 & 0 & 0 & 0 & 0 \cr
3^{1}1^{1} & 0 & 0 & 0 & 0 & 0 & 0 & 0 & 3 & 0 & 0 & 0 \cr
2^{2} & 0 & 0 & 0 & 0 & 0 & 0 & 0 & 0 & 2 & 0 & 2 \cr
2^{11} & 0 & 0 & 0 & 0 & 0 & 0 & 0 & 0 & 0 & 4 & 0 \cr
4^{1} & 0 & 0 & 0 & 0 & 0 & 0 & 0 & 0 & 0 & 0 & 4 \cr
}
&\qquad
\bbmatrix{
~ & 1^{4} & 1^{31} & 1^{22} & 1^{211} & 1^{1111} & 2^{1}1^{2} & 2^{1}1^{11} & 3^{1}1^{1} & 2^{2} & 2^{11} & 4^{1} \cr
1^{4} & 1 & 0 & 0 & 0 & 0 & 0 & 0 & 0 & \frac{1}{2} & 0 & \frac{1}{4} \cr
1^{31} & 0 & 1 & 0 & 0 & 0 & 0 & 0 & \frac{1}{3} & 0 & 0 & \frac{1}{3} \cr
1^{22} & 0 & 0 & 1 & 0 & 0 & \frac{1}{2} & 0 & 0 & \frac{1}{2} & \frac{1}{4} & \frac{1}{8} \cr
1^{211} & 0 & 0 & 0 & 1 & 0 & \frac{1}{2} & \frac{1}{2} & \frac{1}{2} & 0 & \frac{1}{2} & \frac{1}{4} \cr
1^{1111} & 0 & 0 & 0 & 0 & 1 & 0 & \frac{1}{2} & \frac{1}{6} & 0 & \frac{1}{4} & \frac{1}{24} \cr
2^{1}1^{2} & 0 & 0 & 0 & 0 & 0 & 1 & 0 & 0 & 0 & 1 & \frac{1}{2} \cr
2^{1}1^{11} & 0 & 0 & 0 & 0 & 0 & 0 & 1 & 1 & 0 & 1 & \frac{1}{2} \cr
3^{1}1^{1} & 0 & 0 & 0 & 0 & 0 & 0 & 0 & 1 & 0 & 0 & 1 \cr
2^{2} & 0 & 0 & 0 & 0 & 0 & 0 & 0 & 0 & 1 & 0 & \frac{1}{2} \cr
2^{11} & 0 & 0 & 0 & 0 & 0 & 0 & 0 & 0 & 0 & 1 & \frac{1}{2} \cr
4^{1} & 0 & 0 & 0 & 0 & 0 & 0 & 0 & 0 & 0 & 0 & 1 \cr
}
\end{align*} }
\end{footnotesize}

\begin{footnotesize} 
{\let\quad\thinspace 
\begin{align*}
&\mcM(E^+,p^\otimes) &\mcM(E,p^\otimes)\\
&\bbmatrix{
~ & 1^{4} & 1^{31} & 1^{22} & 1^{211} & 1^{1111} & 2^{1}1^{2} & 2^{1}1^{11} & 3^{1}1^{1} & 2^{2} & 2^{11} & 4^{1} \cr
1^{4} & 1 & 0 & 0 & 0 & 0 & 0 & 0 & 0 & -\frac{1}{2} & 0 & -\frac{1}{4} \cr
1^{31} & 0 & 1 & 0 & 0 & 0 & 0 & 0 & \frac{1}{3} & 0 & 0 & \frac{1}{3} \cr
1^{22} & 0 & 0 & 1 & 0 & 0 & -\frac{1}{2} & 0 & 0 & \frac{1}{2} & \frac{1}{4} & \frac{1}{8} \cr
1^{211} & 0 & 0 & 0 & 1 & 0 & \frac{1}{2} & -\frac{1}{2} & -\frac{1}{2} & 0 & -\frac{1}{2} & -\frac{1}{4} \cr
1^{1111} & 0 & 0 & 0 & 0 & 1 & 0 & \frac{1}{2} & \frac{1}{6} & 0 & \frac{1}{4} & \frac{1}{24} \cr
2^{1}1^{2} & 0 & 0 & 0 & 0 & 0 & 1 & 0 & 0 & 0 & -1 & -\frac{1}{2} \cr
2^{1}1^{11} & 0 & 0 & 0 & 0 & 0 & 0 & 1 & 1 & 0 & 1 & \frac{1}{2} \cr
3^{1}1^{1} & 0 & 0 & 0 & 0 & 0 & 0 & 0 & 1 & 0 & 0 & 1 \cr
2^{2} & 0 & 0 & 0 & 0 & 0 & 0 & 0 & 0 & 1 & 0 & -\frac{1}{2} \cr
2^{11} & 0 & 0 & 0 & 0 & 0 & 0 & 0 & 0 & 0 & 1 & \frac{1}{2} \cr
4^{1} & 0 & 0 & 0 & 0 & 0 & 0 & 0 & 0 & 0 & 0 & 1 \cr
}
&\qquad
\bbmatrix{
~ & 1^{4} & 1^{31} & 1^{22} & 1^{211} & 1^{1111} & 2^{1}1^{2} & 2^{1}1^{11} & 3^{1}1^{1} & 2^{2} & 2^{11} & 4^{1} \cr
1^{4} & -1 & 0 & 0 & 0 & 0 & 0 & 0 & 0 & -\frac{1}{2} & 0 & -\frac{1}{4} \cr
1^{31} & 0 & 1 & 0 & 0 & 0 & 0 & 0 & \frac{1}{3} & 0 & 0 & \frac{1}{3} \cr
1^{22} & 0 & 0 & 1 & 0 & 0 & \frac{1}{2} & 0 & 0 & \frac{1}{2} & \frac{1}{4} & \frac{1}{8} \cr
1^{211} & 0 & 0 & 0 & -1 & 0 & -\frac{1}{2} & -\frac{1}{2} & -\frac{1}{2} & 0 & -\frac{1}{2} & -\frac{1}{4} \cr
1^{1111} & 0 & 0 & 0 & 0 & 1 & 0 & \frac{1}{2} & \frac{1}{6} & 0 & \frac{1}{4} & \frac{1}{24} \cr
2^{1}1^{2} & 0 & 0 & 0 & 0 & 0 & 1 & 0 & 0 & 0 & 1 & \frac{1}{2} \cr
2^{1}1^{11} & 0 & 0 & 0 & 0 & 0 & 0 & -1 & -1 & 0 & -1 & -\frac{1}{2} \cr
3^{1}1^{1} & 0 & 0 & 0 & 0 & 0 & 0 & 0 & 1 & 0 & 0 & 1 \cr
2^{2} & 0 & 0 & 0 & 0 & 0 & 0 & 0 & 0 & -1 & 0 & -\frac{1}{2} \cr
2^{11} & 0 & 0 & 0 & 0 & 0 & 0 & 0 & 0 & 0 & 1 & \frac{1}{2} \cr
4^{1} & 0 & 0 & 0 & 0 & 0 & 0 & 0 & 0 & 0 & 0 & -1 \cr
}
\end{align*} }
\end{footnotesize}

\begin{footnotesize} 
{\let\quad\thinspace 
\begin{align*}
&\mcM(P,m^\otimes) &\mcM(H,m^\otimes)\\
&\bbmatrix{
~ & 1^{4} & 1^{31} & 1^{22} & 1^{211} & 1^{1111} & 2^{1}1^{2} & 2^{1}1^{11} & 3^{1}1^{1} & 2^{2} & 2^{11} & 4^{1} \cr
1^{4} & 1 & 1 & 1 & 1 & 1 & 1 & 1 & 1 & 1 & 1 & 1 \cr
1^{31} & 0 & 1 & 0 & 2 & 4 & 0 & 2 & 1 & 0 & 0 & 0 \cr
1^{22} & 0 & 0 & 2 & 2 & 6 & 2 & 2 & 0 & 0 & 2 & 0 \cr
1^{211} & 0 & 0 & 0 & 2 & 12 & 0 & 2 & 0 & 0 & 0 & 0 \cr
1^{1111} & 0 & 0 & 0 & 0 & 24 & 0 & 0 & 0 & 0 & 0 & 0 \cr
2^{1}1^{2} & 0 & 0 & 0 & 0 & 0 & 2 & 2 & 0 & 0 & 4 & 0 \cr
2^{1}1^{11} & 0 & 0 & 0 & 0 & 0 & 0 & 4 & 0 & 0 & 0 & 0 \cr
3^{1}1^{1} & 0 & 0 & 0 & 0 & 0 & 0 & 0 & 3 & 0 & 0 & 0 \cr
2^{2} & 0 & 0 & 0 & 0 & 0 & 0 & 0 & 0 & 2 & 4 & 2 \cr
2^{11} & 0 & 0 & 0 & 0 & 0 & 0 & 0 & 0 & 0 & 8 & 0 \cr
4^{1} & 0 & 0 & 0 & 0 & 0 & 0 & 0 & 0 & 0 & 0 & 4 \cr
}
&\qquad
\bbmatrix{
~ & 1^{4} & 1^{31} & 1^{22} & 1^{211} & 1^{1111} & 2^{1}1^{2} & 2^{1}1^{11} & 3^{1}1^{1} & 2^{2} & 2^{11} & 4^{1} \cr
1^{4} & 1 & 1 & 1 & 1 & 1 & 1 & 1 & 1 & 1 & 1 & 1 \cr
1^{31} & 0 & 1 & 0 & 2 & 4 & 1 & 3 & 2 & 0 & 2 & 1 \cr
1^{22} & 0 & 0 & 2 & 2 & 6 & 2 & 4 & 2 & 1 & 3 & 1 \cr
1^{211} & 0 & 0 & 0 & 2 & 12 & 1 & 7 & 3 & 0 & 4 & 1 \cr
1^{1111} & 0 & 0 & 0 & 0 & 24 & 0 & 12 & 4 & 0 & 6 & 1 \cr
2^{1}1^{2} & 0 & 0 & 0 & 0 & 0 & 1 & 1 & 1 & 0 & 2 & 1 \cr
2^{1}1^{11} & 0 & 0 & 0 & 0 & 0 & 0 & 2 & 2 & 0 & 2 & 1 \cr
3^{1}1^{1} & 0 & 0 & 0 & 0 & 0 & 0 & 0 & 1 & 0 & 0 & 1 \cr
2^{2} & 0 & 0 & 0 & 0 & 0 & 0 & 0 & 0 & 1 & 1 & 1 \cr
2^{11} & 0 & 0 & 0 & 0 & 0 & 0 & 0 & 0 & 0 & 2 & 1 \cr
4^{1} & 0 & 0 & 0 & 0 & 0 & 0 & 0 & 0 & 0 & 0 & 1 \cr
}
\end{align*} }
\end{footnotesize}

\begin{footnotesize} 
{\let\quad\thinspace 
\begin{align*}
&\mcM(E^+,m^\otimes) &\mcM(E,m^\otimes)\\
&\bbmatrix{
~ & 1^{4} & 1^{31} & 1^{22} & 1^{211} & 1^{1111} & 2^{1}1^{2} & 2^{1}1^{11} & 3^{1}1^{1} & 2^{2} & 2^{11} & 4^{1} \cr
1^{4} & 1 & 1 & 1 & 1 & 1 & 0 & 0 & 0 & 0 & 0 & 0 \cr
1^{31} & 0 & 1 & 0 & 2 & 4 & 1 & 1 & 0 & 0 & 0 & 0 \cr
1^{22} & 0 & 0 & 2 & 2 & 6 & 0 & 2 & 0 & 1 & 1 & 0 \cr
1^{211} & 0 & 0 & 0 & 2 & 12 & 1 & 5 & 1 & 0 & 2 & 0 \cr
1^{1111} & 0 & 0 & 0 & 0 & 24 & 0 & 12 & 4 & 0 & 6 & 1 \cr
2^{1}1^{2} & 0 & 0 & 0 & 0 & 0 & 1 & 1 & 1 & 0 & 0 & 0 \cr
2^{1}1^{11} & 0 & 0 & 0 & 0 & 0 & 0 & 2 & 2 & 0 & 2 & 1 \cr
3^{1}1^{1} & 0 & 0 & 0 & 0 & 0 & 0 & 0 & 1 & 0 & 0 & 1 \cr
2^{2} & 0 & 0 & 0 & 0 & 0 & 0 & 0 & 0 & 1 & 1 & 0 \cr
2^{11} & 0 & 0 & 0 & 0 & 0 & 0 & 0 & 0 & 0 & 2 & 1 \cr
4^{1} & 0 & 0 & 0 & 0 & 0 & 0 & 0 & 0 & 0 & 0 & 1 \cr
}
&\qquad
\bbmatrix{
~ & 1^{4} & 1^{31} & 1^{22} & 1^{211} & 1^{1111} & 2^{1}1^{2} & 2^{1}1^{11} & 3^{1}1^{1} & 2^{2} & 2^{11} & 4^{1} \cr
1^{4} & -1 & 1 & 1 & -1 & 1 & 0 & 0 & 0 & 0 & 0 & 0 \cr
1^{31} & 0 & 1 & 0 & -2 & 4 & -1 & 1 & 0 & 0 & 0 & 0 \cr
1^{22} & 0 & 0 & 2 & -2 & 6 & 0 & 2 & 0 & 1 & 1 & 0 \cr
1^{211} & 0 & 0 & 0 & -2 & 12 & -1 & 5 & 1 & 0 & 2 & 0 \cr
1^{1111} & 0 & 0 & 0 & 0 & 24 & 0 & 12 & 4 & 0 & 6 & 1 \cr
2^{1}1^{2} & 0 & 0 & 0 & 0 & 0 & 1 & -1 & -1 & 0 & 0 & 0 \cr
2^{1}1^{11} & 0 & 0 & 0 & 0 & 0 & 0 & -2 & -2 & 0 & -2 & -1 \cr
3^{1}1^{1} & 0 & 0 & 0 & 0 & 0 & 0 & 0 & 1 & 0 & 0 & 1 \cr
2^{2} & 0 & 0 & 0 & 0 & 0 & 0 & 0 & 0 & -1 & 1 & 0 \cr
2^{11} & 0 & 0 & 0 & 0 & 0 & 0 & 0 & 0 & 0 & 2 & 1 \cr
4^{1} & 0 & 0 & 0 & 0 & 0 & 0 & 0 & 0 & 0 & 0 & -1 \cr
}
\end{align*} }
\end{footnotesize}

 
\end{document}